\newcommand{\Z}{\mathbb{Z}}
\newcommand{\N}{\mathbb{N}}
\newcommand{\R}{\mathbb{R}}
\newcommand{\C}{\mathbb{C}}
\newcommand{\I}{\mathbb{I}}
\renewcommand{\L}{\mathbb{L}}
\newcommand{\B}{\mathscr{B}}
\newcommand{\D}{\mathbf{D}}
\newcommand{\h}{\mathfrak{h}}
\renewcommand{\H}{\mathscr{H}}
\newcommand{\J}{\mathscr{J}}
\newcommand{\Ga}{\mathbf{\Gamma}}
\renewcommand{\rho}{\varrho}
\DeclareMathOperator{\dom}{dom}
\DeclareMathOperator{\dist}{dist}
\DeclareMathOperator{\diam}{diam}
\DeclareMathOperator{\area}{area}
\DeclareMathOperator{\vol}{vol}
\newcommand{\la}{\langle}
\newcommand{\ra}{\rangle}
\newcommand{\eps}{\varepsilon}
\newcommand{\e}{_{\varepsilon}}
\newcommand{\ke}{_{k,\varepsilon}}
\newcommand{\ie}{_{i,\varepsilon}}
\newcommand{\je}{_{j,\varepsilon}}
\newcommand{\ije}{_{i,j,\varepsilon}}
\newcommand{\al}{\alpha}
\renewcommand{\d}{\,\mathrm{d}}
\newcommand{\x}{\mathbf{x}}
\newcommand{\ds}{\displaystyle}
\newcommand{\Id}{\mathrm{Id}}
\newcommand{\cupl}{\bigcup\limits}
\newcommand{\suml}{\sum\limits}
\newcommand{\wt}{\widetilde}
\newcommand{\wh}{\widehat}
\newcommand{\ceq}{\coloneqq}
\newcommand{\restr}{\!\restriction}
\renewcommand{\a}{\mathfrak{a}}
\newcommand{\A}{\mathscr{A}}
\theoremstyle{plain}
\newtheorem{theorem}{Theorem}[section]
\newtheorem*{theorem*}{Theorem}
\newtheorem{lemma}[theorem]{Lemma}
\newtheorem*{lemma*}{Lemma}
\theoremstyle{remark}
\newtheorem{remark}[theorem]{Remark}
\newtheorem*{remark*}{Remark}
\newtheorem{example}[theorem]{Example}
\newtheorem*{example*}{Example}
\theoremstyle{definition}
\numberwithin{equation}{section}
\numberwithin{figure}{section}
\title
[A geometric approximation of non-local interface and boundary conditions]
{A geometric approximation of non-local interface and boundary conditions}
\author[Pavel Exner]{Pavel Exner\,$^{1,2}$}
\address{$^1$ Doppler Institute for Mathematical Physics and Applied Mathematics, Czech Technical University,  B\v rehov\'a 7, 11519 Prague, Czechia}
\address{$^2$ Department of Theoretical Physics, Nuclear Physics Institute, Czech Academy of Sciences, Hlavn\'{\i} 130, 25068 \v{R}e\v{z} near Prague, Czechia}
\email{exner@ujf.cas.cz}
\author[Andrii Khrabustovskyi]{Andrii Khrabustovskyi\,$^3$}
\address{$^3$ Department of Physics, Faculty of Science, University of Hradec Kr\'{a}lov\'{e}, Rokitansk\'eho 62, 50003 Hradec Kr\'alov\'e, Czech Republic}
\email{andrii.khrabustovskyi@uhk.cz}
\begin{document}
	\allowdisplaybreaks	
	
	\begin{abstract}
		We analyze an approximation of a Laplacian subject to non-local interface conditions of a $\delta'$-type by Neumann Laplacians on a family of Riemannian manifolds with a sieve-like structure. We establish a (kind of) resolvent convergence for such operators, which in turn implies the convergence of spectra and eigenspaces, and demonstrate convergence of the corresponding semigroups. Moreover, we provide an explicit example of a manifold allowing to realize any prescribed integral kernel appearing in that interface conditions. Finally, we extend the discussion to similar approximations for the Laplacian with non-local Robin-type boundary conditions.
	\end{abstract}
	
	\keywords{$\delta'$-type interactions; non-local interface conditions;   homogenization; Riemannian manifold; Neumann sieve; resolvent convergence; spectrum; varying Hilbert spaces}
	
	\subjclass{35B27, 35B40, 35P05, 47A55}
	
	\maketitle	
	
	\section{Introduction}\label{sec:1}
	
	Schr\"odinger operators with singular potentials (interactions) supported on hypersurfaces have {attracted} significant attention over the past decade, driven by their relevance in quantum mechanics -- particularly in the modeling of thin-layer and surface-confined quantum systems -- as well as their role as natural mathematical generalizations of one-dimensional point interactions. The presence of such a singular interaction implies that the wave function must satisfy appropriate interface conditions on the underlying surface (which we denote by $\Gamma$). Among the various possible conditions, two have received particular attention: the so-called $\delta$-interactions, characterized by the continuity of the wave function across $\Gamma$ with a jump in its normal derivative, and the $\delta'$-interactions, in which the roles are reversed -- the normal derivative remains continuous, while the wave function itself exhibits a discontinuity. More precisely, the interface conditions corresponding to a $\delta'$-interaction read as follows,
	\begin{align}\label{delta'}
		\left({\partial u\over\partial\nu}\right)^+
		=
		\left({\partial u\over\partial\nu}\right)^-
		=
		\gamma(u^+-u^-)
	\end{align}
	where   $u^\pm$ denote the traces of the wave function on either side of  $\Gamma$,	$\left({\partial u \over\partial   \nu}\right)^{\pm}$ are the corresponding traces of the normal derivative taken with respect to the unit normal vector field directed from the ‘minus’ side of $\Gamma$ towards the ‘plus’ side, $\gamma$ represents the interaction strength, 
	interaction strength, repulsive for $\gamma>0$ and attractive for $\gamma<0$, given as
	a function on $\Gamma$. For a detailed analysis of both models, we refer the reader to \cite{BLL13a,BEL14}.
	
	While these models are both useful and mathematically tractable, it is important to remember that the singular interaction represents an idealized approximation of a more realistic physical description. Consequently, a central problem in this area is to understand how such interactions can be approximated by regular models -- for example, by Schrödinger operators with smooth potentials supported in shrinking neighborhoods of the surface $\Gamma$.
	Such approximations are well known for $\delta$-interactions (see \cite{BEHL17,EI01,EK03,AGS87,Si92}), whereas for $\delta'$-interactions, a result on approximation by regular Schrödinger operators is known only in the one-dimensional case in which $\Gamma$ is a point and the normal derivatives in \eqref{delta'} are replaced by the usual derivative -- see {\cite{AN00,ENZ01} for the result with full mathematical rigor   following a seminal physicists' idea \cite{CS98}}. However, {also another kind of regular approximations of $\delta'$-interactions is known} -- as a homogenization limit of the Neumann Laplacian on a suitable perforated domain $\Omega\e$. This domain has the form $\Omega\setminus\Gamma\e$, where $\Omega$ is a domain on which our operator of interest is given (it is intersected by $\Gamma$), while $\Gamma\e$ is either a subset of $\Gamma$ obtained by drilling a lot of small holes in it (\emph{thin sieve}) or an $\eps$-neighbourhood of $\Gamma$ punctured by many narrow passages (\emph{thick sieve\footnote{The term `thick sieve' may seem misleading, as the sieve thickness tends to zero as $\varepsilon \to 0$. It was introduced in \cite{DelV87}, presumably to distinguish this setting from the earlier studies of sieves with   zero thickness.}}), see Figure~\ref{fig0}. As $\varepsilon \to 0$, the number of holes (respectively, passages) tends to infinity, while their diameters tend to zero. It is known that, if the holes (respectively, passages) are appropriately scaled, the limiting behavior yields the desired $\delta'$-interaction -- see \cite{AP87,Da85,Kh23,MS66,Mu85,Pi87} (respectively, \cite{DelV87,Kh24}). Needless to say, such geometric approximations are only possible when the approximated operator is non-negative (since {any} Neumann Laplacian is {by definition} a non-negative operator); this is the case when the interaction strength {in \eqref{delta'}} is a non-negative function.
	\begin{figure}
		\centering
		\includegraphics[width=0.3\linewidth]{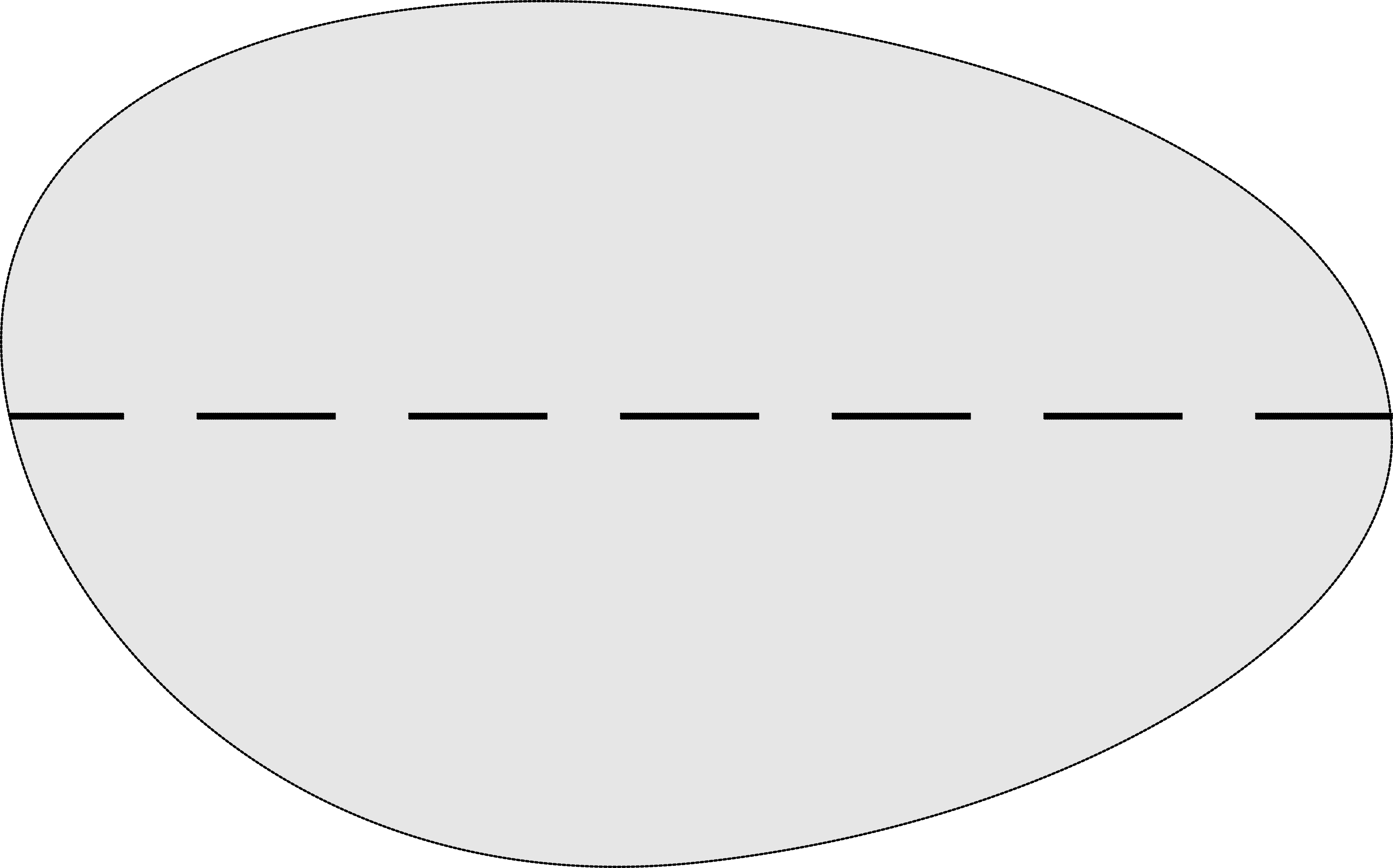}\qquad\qquad
		\includegraphics[width=0.3\linewidth]{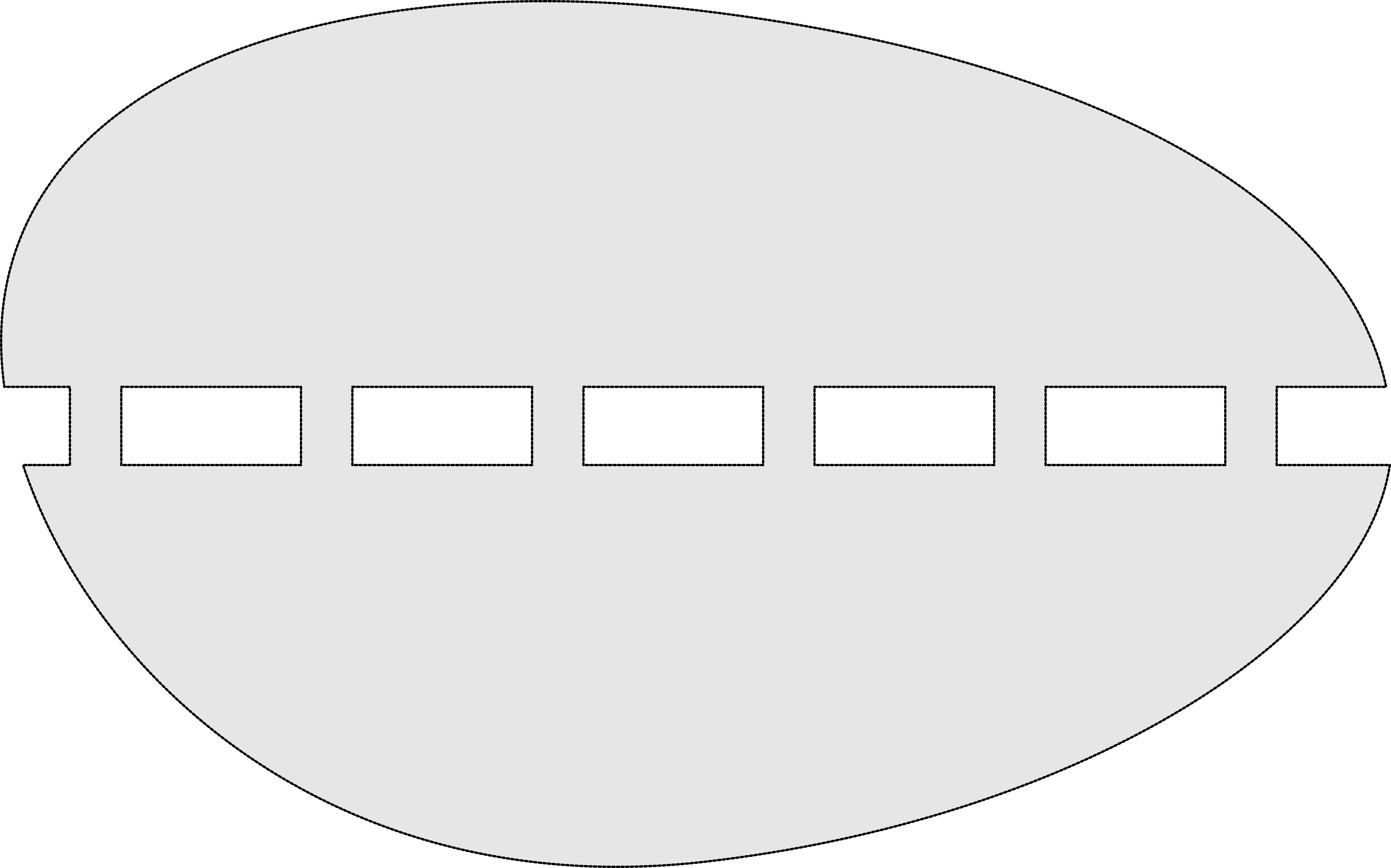}
		\caption{The domain $\Omega$ perforated by a thin sieve (left image) and a thick sieve (right image). Under suitable assumptions on {the} scalings of the sieves, the Neumann Laplacian on this domain approximates the Laplacian on $\Omega$ subject to the `standard' $\delta'$-type conditions \eqref{delta'}.}
		\label{fig0}
	\end{figure}
	
	\smallskip
	
	In the current paper we address the Laplacian subject to the non-local counterpart of the $\delta'$ conditions \eqref{delta'}, namely,
	\begin{align}\label{delta':nonlocal}
		\ds\left(\frac{\partial u}{\partial \nu}\right)^\pm=\pm\int_\Gamma K(x,y)(u^\pm(x)-u^\mp(y)){\d s_y}
	\end{align}
	with a non-negative symmetric continuous kernel $K(x,y)$; we denote this operator by $\H$. Note that if we (formally) put $K(x,y)=\gamma(x)\delta_x(y)$ in \eqref{delta':nonlocal}, where $\delta_x(y)$ is a Dirac $\delta$-function on $\Gamma$ supported
	at $x\in\Gamma$, we arrive at the `standard' $\delta'$-interaction \eqref{delta'}. The goal of this paper is to construct approximations of this operator by Neumann Laplacians.
	
	The nonlocal nature of the interface conditions \eqref{delta':nonlocal} suggests {to consider} a thick sieve, but with passages connecting {general points, not just mirror images on the two sieve borders. This concept has a natural physical appeal. If two regions are connected through a thick `porous' interface, the passages are rarely straight unless the sieve material is a product of a sophisticated engineering. Implementing this ideas mathematically, however,} we immediately face difficulties. First {of all}, in the two-dimensional case, intersections between the passages {become then} unavoidable, whereas in dimensions $n \geq 3$, although {they} can be avoided, {we obtain in general a highly complex} sieve geometry. {Secondly}, if we allow the passages to connect distant points, their lengths do not vanish in the limit -- even though they lie within a shrinking neighborhood of $\Gamma$. However, to get $\delta'$-type interactions the passages must be short
	(cf.~\cite{DelV87,Kh24}).
	
	To avoid these {troubling features}, we consider the Neumann Laplacian not on a domain in $\mathbb{R}^n$, but on a Riemannian manifold. This manifold is constructed as a union of domains in $\mathbb{R}^n$  with parts of their boundaries identified in such a way that the resulting space forms a topological manifold with boundary. Essentially, this manifold resembles a sieve, yet it cannot be regarded as a domain in $\mathbb{R}^n$.
	
	\smallskip
	
	Along with  the Laplacian with interface conditions \eqref{delta':nonlocal}, we also {consider} the Laplacian subject to non-local Robin-type boundary conditions of the form
	\begin{align}\label{Robin:nonlocal}
		\ds \frac{\partial u}{\partial \nu} =
		-\int_\Gamma K(x,y)(u(x)-u(y)){\d s_y}.
	\end{align}
	Here $\Gamma$ is a  subset of $\partial\Omega$,
	$\nu$ is the outward unit normal to $\partial\Omega$. The analysis of Laplacians with abstract non-local boundary conditions (covering those mentioned above) has been carried out in \cite{GM09,BLL13b}. Our objective remains the same: to approximate such operators by Neumann Laplacians. In this case, the approximating manifold consists of a domain $\Omega$ connected by numerous short thin `handles' -- see Theorem~\ref{th:Robin} for {precise statement of the} result.
	
	Elliptic operators with non-local boundary conditions frequently serve as infinitesimal generators of Feller semigroups associated with diffusion processes  -- see the pioneering works \cite{Fe52,Fe54,We59} and some key follow-ups \cite{SU65,GS01,Ta92}. In particular, diffusion equations with non-local Robin   conditions
	resembling those in \eqref{Robin:nonlocal} were addressed in \cite{AKK18}. In this context, our homogenization result (cf. \eqref{diffusion} in Theorem~\ref{th:Robin}) provides an interpretation of the boundary conditions \eqref{Robin:nonlocal}: a diffusing particle reaches the boundary {from which it returns to the bulk}, but due to the presence of attached passages connecting distant points, {its motion continues starting} from a different location.
	
	The homogenization problem considered in this paper is closely related to the homogenization of Laplacians on Riemannian manifolds consisting of a base manifold with numerous attached `handles' or (in the case of a multicomponent base manifold) `wormholes' connecting the base manifold components. Such problems have been studied in \cite{Kh09,Kh13,BK96,AP25}. The key difference between the manifolds analyzed in those works and the one we consider here lies in the placement of the handles and wormholes: in previous studies, they are attached throughout the interior of the manifold, whereas in our model, they are attached along the boundary. In both settings, the homogenized operator may be non-local (cf.~\cite{Kh09,BK96}). However, in contrast to our case -- where non-locality arises in the boundary conditions -- the non-local terms in the cited works appear in the operator's action itself.
	\smallskip
	
	The paper is organized as follows. In Section~\ref{sec:2}, we rigorously define the Laplacian with the interface conditions \eqref{delta':nonlocal}, construct the approximating manifold, and formulate the main result (Theorem~\ref{th1}) which establishes the resolvent convergence of the associated operators. As a consequence, we also state the convergence of eigenvalues and eigenfunctions (Theorem~\ref{th2}) and the convergence of  the semigroups (Theorem~\ref{th3}). Section~\ref{sec:3} presents an example of a manifold that satisfies the assumptions we claim and yields an arbitrary prescribed integral kernel $K(x,y)$ in the homogenization limit. The proofs of Theorems~\ref{th1}, \ref{th2}, \ref{th3} are provided in Section~\ref{sec:4}. Finally, in Section~\ref{sec:5}, we discuss the approximation of the Laplacian with Robin-type non-local boundary conditions.
	
	\section{Problem setting and main results}\label{sec:2}
	\thispagestyle{empty}
	
	Let $\Omega\subset\R^n$ be a bounded Lipschitz domain containing the origin. We denote by $\Gamma$ the intersection of $\Omega$ with a hyperplane $\{x=(x^1,\dots,x^n)\in\R^n:\ x^n=0\}$. By $\Omega^+$ (respectively, $\Omega^-$)
	we denote the subsets of $\Omega$ lying {`above' and `below'} $\Gamma$ {in sense that $\pm x^n>0$, respectively}, see Figure~\ref{fig1} (left).
	We assume that both $\Omega^+$ and $\Omega^-$ satisfy the cone property \cite[Chapt.~4]{AF03}. This assumption is merely technical, serving to ensure the compactness of the embedding $H^1(\Omega^\pm) \hookrightarrow L^2(\Omega^\pm)$ (the Rellich-Kondrashov theorem) as well as the compactness of the trace operators $H^1(\Omega^\pm) \to L^2(\Gamma)$ --- see, e.g., \cite[Theorem~{6.3}]{AF03}, which covers both statements.
	We also introduce the domain $\Ga \subset\R^{n-1}$ by
	\begin{gather}\label{Ga}
		\Ga \ceq\{\x=(x^1,\dots,x^{n-1})\in\R^{n-1}: (\x,0)\in\Gamma\}.
	\end{gather}
	In $L^2(\Omega)$ we consider the sesquilinear form $\h$ given by
	\begin{align*}
		\h[u,v]&=\int_{\Omega^+}\nabla u\cdot\overline{\nabla v}\d x+
		\int_{\Omega^-}\nabla u\cdot\overline{\nabla v}\d x\\
		&+
		\int_\Gamma\int_\Gamma K(x,y)(u^+(x)-u^-(y))\overline{(v^+(x)-v^-(y))}
		\d s_x \d s_y
	\end{align*}
	on the domain $\dom(\h)=H^1(\Omega\setminus\Gamma)$. Here  $K:\overline{\Gamma}\times\overline{\Gamma}\to\R$ is a continuous function satisfying
	\begin{gather}\label{K:prop}
		K(x,y)=K(y,x),\quad K(x,y)> 0,
	\end{gather}
	$\d s_*$ stands for the surface element on $\Gamma$ (with the subscript indicating the variable of integration), $u^+$ and $u^-$ are the traces of $u$ being taken from $\Omega^+$ and $\Omega^-$, respectively. Standard {argument using} trace theorem yields $u^\pm\in H^{1/2}(\Gamma)\subset L^2(\Gamma)$. Evidently, the form $\h$ is well-defined, it is symmetric, densely defined, and positive. 
	Furthermore, the continuity of the trace operators $H^1(\Omega^\pm)\to L^2(\Gamma)$ implies that the energy norm $$\|f\|_{\h} \ceq \left(\h[f, f] + \|f\|_{L^2(\Omega)}^2\right)^{1/2}$$ 
	is equivalent to the standard Sobolev norm on $H^1(\Omega\setminus\Gamma)$, 
	whence the form $\h$ is closed.
	Then by \cite[Theorem 6.2.1]{K66} there exists {a} unique self-adjoint and positive operator $\H$ associated with $\h$, i.e. {we have} $\dom(\H)\subset\dom(\h)$ and
	\begin{gather*}
		\forall u\in
		\dom(\H),\ \forall  v\in \dom(\h):\ (\H u,v)_{L^2(\Omega)}= \h[u,v].
	\end{gather*}
	The spectrum of $\H$ is purely discrete because the form domain $\dom(\h)$, endowed with the energy norm $\|f\|_{\h},$ is compactly embedded in $L^2(\Omega)$. This result follows from the  equivalence of $\|\cdot\|_{\h}$ to the standard norm on $H^1(\Omega\setminus\Gamma)$ and the Rellich-Kondrashov  theorem.
	
	Our main goal is to approximate the resolvent $(\H+\Id)^{-1}$ of the operator $\H$ by the resolvent $(\H\e+\Id)^{-1}$ of the Neumann Laplacian $\H\e$ on an appropriately constructed `sieve-type' Riemannian manifold $M\e$; hereinafter, by $\Id$ we denote the identity operator. Note that $ u=(\H+\Id)^{-1}f$ with $f\in L^2(\Omega)$ is the weak solution to the following boundary value problem:
	\begin{gather*}
		\begin{cases}
			-\Delta u^\pm + u^\pm = f^\pm&\text{in }\Omega^\pm,\\
			\ds\frac{\partial u^\pm}{\partial x^n}=\pm\int_\Gamma K(x,y)(u^\pm(x)-u^\mp(y)){\d s_y}&\text{on }\Gamma,\\[2mm]
			\ds\frac{\partial u}{\partial \nu}=0&\text{on }\partial\Omega.
		\end{cases}
	\end{gather*}
	Here $u^+,\,f^+$ and $f^-,\,f^-$ denote
	the restrictions of $u,\,f$ on $\Omega^+$ and $\Omega^-$, respectively;
	the same notations $u^\pm,\,f^\pm$ are used for the traces of $u^\pm,\,f^\pm$ on $\Gamma$.
	\begin{figure}[t]
		\centering
		\raisebox{-0.5\height}{\includegraphics[width=0.4\textwidth]{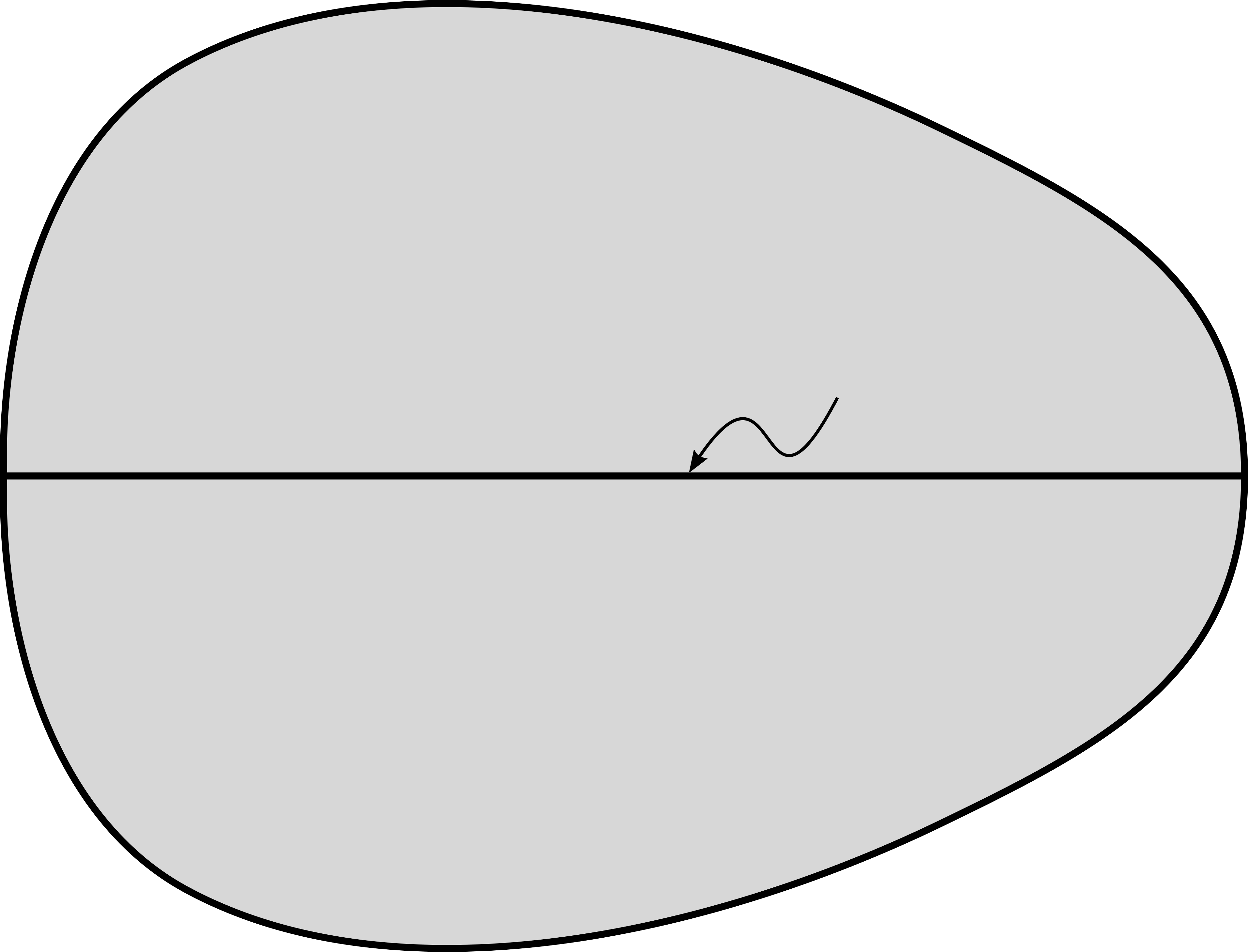}}\qquad\qquad\qquad
		\raisebox{-0.5\height}{\includegraphics[width=0.4\textwidth]{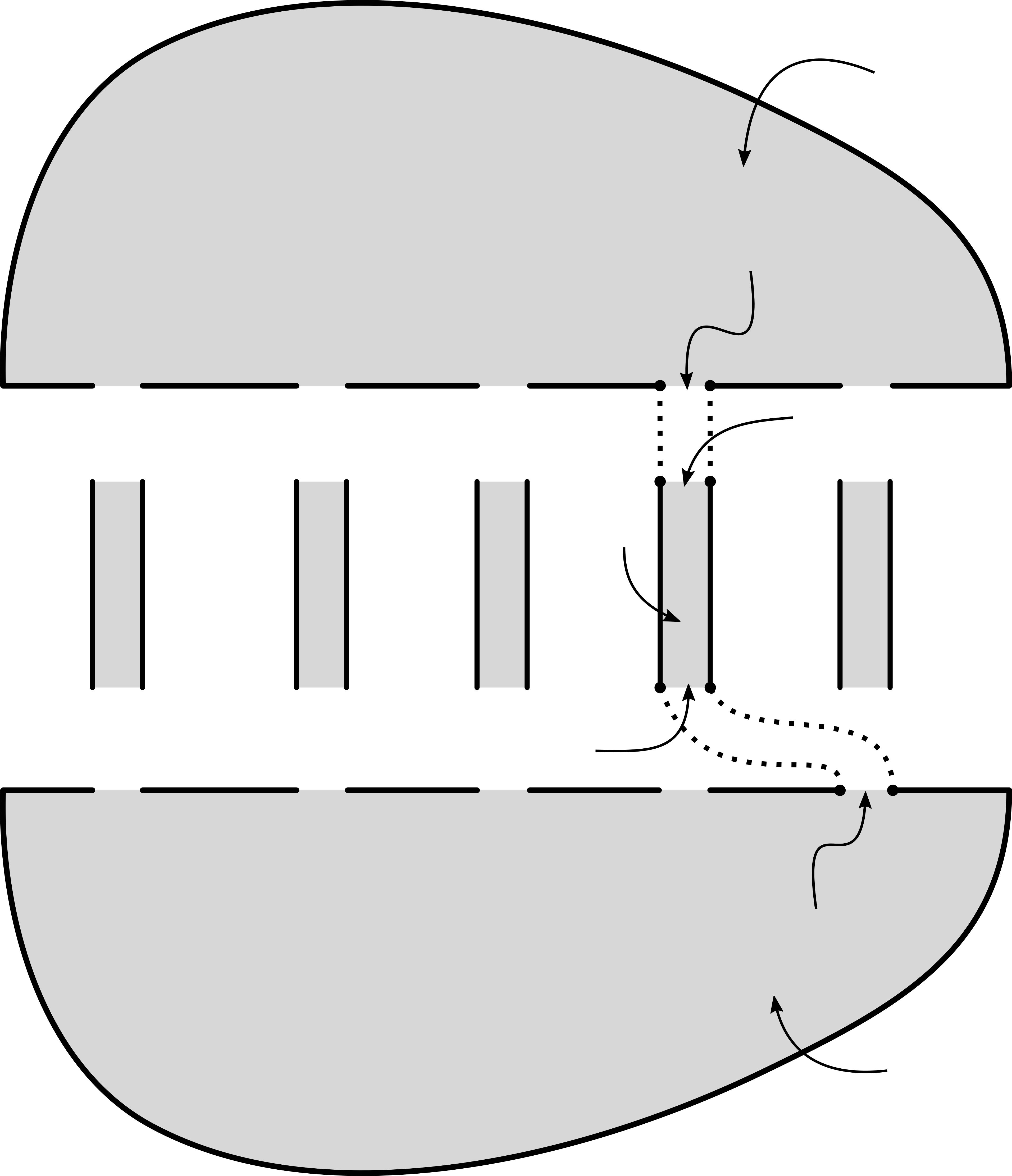}}
		\begin{picture}(0,0)
			\put(-26,87){$M^+$}
			\put(-23,-88){$M^-$}
			\put(-307,13){$\Gamma$}
			
			\put(-85,11){$T\ije$}
			\put(-55,60){$D\ie^+$}
			\put(-48,-65){$D\je^-$}
			\put(-40,26){$S\ie^+$}
			\put(-92,-28){$S\je^-$}
			
			\put(-390,26){$\Omega^+$}
			\put(-390,-28){$\Omega^-$}
			
		\end{picture}
		\caption{
			The domain $\Omega$ (left) and the manifold $M\e$ (right). The dotted lines show how the passages $T\ije$ are glued to $M^\pm$ via identifications of some parts of their boundaries}
		\label{fig1}
	\end{figure}
	
	We start the construction of the sought manifold by defining the copies $M^+$ and $M^-$ of the domains $\Omega^+$ and $\Omega^-$, respectively (cf.~Remark~\ref{rem:placement} below). Namely, we set
	\begin{gather}\label{Mpm}
		M^\pm\ceq\Omega^\pm\pm e_n,\quad \Gamma^\pm\ceq \Gamma\pm e_n,\quad \text{where }e_n\ceq (0,\dots,0,1).
	\end{gather}
	We connect $M^+$ and $M^-$ by {a family} of passages. To do this, we first determine the subsets of $\Gamma^\pm$ to which these passages will be {attached}. Let $\eps>0$ be a small parameter, and
	$
	\{\mathbf{D}\ie^+ ,\ i\in \I\e \}
	$
	and
	$
	\{\mathbf{D}\ie^- ,\ i\in \I\e \}
	$
	be two {families} of Lipschitz domains in $\R^{n-1}$; here $\I\e$ is {some} finite set of indices. {On purpose} we do not specify the exact nature of the set $\I\e$, as {it} is completely irrelevant for the formulation and proof of {our} main results. In specific examples, { the choice $\I\e$ will follow from} the context. For instance, in the example we present in Section~\ref{sec:3},  $\I\e$ is a subset of $\Z^{n-1}\times\Z^{n-1}$. We {assume} that $\D\ie^\pm\subset\Ga $, within each of these families the domains are pairwise disjoint, and furthermore, there is a bijective map $\ell\e:\I\e\to \I\e$ such that
	\begin{gather}\label{leps}
		\mathbf{D}\ie^+\cong \mathbf{D}\je^-\text{ provided }j=\ell\e (i),
	\end{gather}
	{where $\cong$ is a congruence.} For $i\in \I\e$ we denote
	\begin{gather}\label{Die}
		D\ie^\pm\ceq\{x=(x^1,\dots,x^n)\in\R^n:\ (x^1,\dots,x^{n-1})\in \D^\pm\ie,\ x^n=\pm 1\}.
	\end{gather}
	Evidently, $D\ie^\pm\subset\Gamma^\pm$. We introduce the subset $\L\e$ of $\I\e\times \I\e$ by
	$$
	\L\e= \{(i,j):\ j=\ell\e(i)\}.
	$$
	Now, for {a given} $(i,j)\in \L\e$ we introduce the passage
	\begin{align}\notag
		T\ije
		&\ceq
		\{x\in\R^n: (x^1,\dots,x^{n-1})\in \D\ie^+ ,\, |x^n|\le h\ije/2 \}\\
		&\,\cong
		\{x\in\R^n: (x^1,\dots,x^{n-1})\in \D\je^- ,\, |x^n|\le h\ije/2 \},
		\label{Tije}
	\end{align}
	where $0<h\ije<1$; the congruence in \eqref{Tije} follows from \eqref{leps}. We denote by $S\ie^+$ and $S\je^-$ the top and bottom faces of $T\ije$, i.e.
	\begin{align*}
		S\ie^+&\ceq \{x\in\R^n: (x^1,\dots,x^{n-1})\in \D^+\ie,\, x^n=  h\ije/2\},\\
		S\je^-&\ceq \{x\in\R^n: (x^1,\dots,x^{n-1})\in \D^+\ie,\, x^n= - h\ije/2\},\ j=\ell\e(i).
	\end{align*}
	Finally, for $(i,j)\in \L\e$, we glue the set $T\ije$ to $M^+$ (respectively, $M^-$) by identifying $S^+\ie$ and $D^+\ie$ (respectively, $S^-\ie$ and $D^-\je$). By construction, the sets $S^+\ie$, $D^+\ie$, $S^-\je$, $D^-\je$ ($(i,j)\in \mathbb{L}\e$)
	are congruent, and the above identification is carrying out according to this congruence. {In this way, we obtain the} (topological) manifold
	\begin{gather}\label{Me}
		M\e=M^+\cup M^-\cup\Big[\cupl_{(i,j)\in \L\e}
		T\ije\Big]\Big/\sim,
	\end{gather}
	where $\sim$ denotes the above identification. The manifold $M\e$ is depicted on Figure~\ref{fig1} {(right)}. Its boundary has the form
	$$
	\partial M\e=
	\left[{\partial\Omega^+}\setminus \cupl_{i\in \I\e}D^+\ie\right]\cup
	\left[{\partial\Omega^-}\setminus \cupl_{i\in \I\e}D^-\ie\right]\cup
	\left[\cupl_{(i,j)\in \L\e}\left({\partial T\ije}\setminus (S\ie^+\cup S\je^-) \right)\right]
	$$
	
	\begin{remark}\label{rem:placement}
		As we will see below, the specific placement of the sets
		$M^\pm$ and $T_{ij}^\varepsilon $ in $\mathbb{R}^n$ plays no role:  instead of using the sets defined by \eqref{Mpm} and \eqref{Tije}, we may choose arbitrary congruent copies of them and connect these copies using the above  defined identifications.
	\end{remark}
	
	We equip $M\e$ with a metric simply assuming that is coincides with the Euclidean metric on each set $M^+$, $M^-$, $ { T}\ije$ constituting the whole manifold. Now, having the metric, we can introduce the Hilbert spaces $L^2(M\e)$ and $H^1(M\e)$; the latter consists of functions $u:M\e\to \C$ such that $u\restr_{M^\pm}\in H^1(M^\pm)$ and $u\restr_{T\ije}\in H^1(T\ije)$ (for each $(i,j)\in \L\e$), and the traces from both sides of $S\ie^+\sim D\ie^+$ and  the traces from both sides of $S\je^-\sim D\je^-$ {coincide}.
	
	In the following, the notation
	\begin{center}
		$\B_d(r,z)$ stands for the open ball in $\R^d$ of radius $r>0$ and center $z\in\R^d$;
	\end{center}
	it will be used later {with $d$ being either $n$ or $n-1$}. By $C,C_1,C_2,\dots$ we denote generic positive constants
	being independent of $\eps$; note that these constants may vary from line to line.
	As usual, the symbol $\rightharpoonup$ {means} the weak convergence in a Hilbert space.
	
	Now, we impose certain assumptions on the geometry of the manifold.
	We denote by $d\ie^+$ and $\x\ie^+$ (respectively, $d\ie^-$ and $\x\ie^-$) the radius and the center of the smallest ball $\B_{n-1}({d\ie^+},\x\ie^+)$ (respectively, $\B_{n-1}({d\ie^-},\x\ie^-)$\,) containing the set $\D\ie^+$ (respectively, $\D\ie^-$). The first block of assumptions concerns the shapes of the sets $\D\ie^\pm$.
	It is more convenient to formulate them in terms of the upscaled sets
	$$
	\wh{\D}^\pm \ie\ceq (d\ie^\pm)^{-1} \D\ie^\pm.
	$$
	Evidently, the outerradius of $\wh{\D}\ie$ equals $1$.
	In the following, the {symbol}
	$\Lambda_{\rm N}(\Omega)$ {means} the smallest non-zero eigenvalue of the Neumann Laplacian on
	a bounded connected open set $\Omega\subset\R^d$ with Lipschitz boundary.
	Our assumptions read as follows,
	\begin{itemize}
		\item The inradii of the sets $\wh{\D}^\pm\ie $ are bounded away from zero uniformly in $\eps $ and $i $:
		\begin{gather}\label{assump:shape1}
			\exists C>0\
			\forall \eps>0\ \forall i\in\I\e\
			\exists \mathbf{z}^\pm\ie\subset\R^{n-1}:\
			\B_{n-1}(C,\mathbf{z}^\pm\ie)\subset\wh{\D}^\pm\ie.
		\end{gather}
		\item The smallest non-zero eigenvalues of the Neumann Laplacian on $\wh\D\ie^\pm$ are bounded away from zero uniformly in $\eps$ and $i$:
		\begin{gather}\label{assump:shape2}
			\exists C>0\
			\forall \eps>0\ \forall i\in\I\e:\
			C\le\Lambda_{\rm N}(\wh\D\ie^\pm).
		\end{gather}
	\end{itemize}
	Condition \eqref{assump:shape1} is relatively straightforward --- it essentially prevents the use of very thin domains $\wh{\D}\ie$. In contrast, condition \eqref{assump:shape2} is more subtle. Apart from the trivial case where all $\wh{\D}\ie^\pm$ are similar, below we present two examples for which this assumption is fulfilled:
	\begin{itemize}
		\item If the sets $\wh{\D}\ie$ are \emph{convex}, then the assumption \eqref{assump:shape2} holds with $C= {\pi^2}/{4}$. This result immediately follows  from the Payne-Weinberger bound
		\footnote{{Cf. \cite{PW60};} is worth noting that the proof in {this paper} had a flaw for $n\ge 3$, which was later corrected in \cite{B03}.}
		$$
		\Lambda_{\rm N}(\Omega)\geq \frac{\pi^2}{(\diam  \Omega)^2},
		$$
		which is fulfilled for convex domains,
		combined with the fact that the outerradius of each $\wh{\bf D}\ie^\pm$ is equal to  $1$ {by assumption}.
		
		\item If $\wh{\D}\ie^\pm$ are smooth  \emph{strictly star-shaped domains} with respect some points $\mathbf{z}\ie^\pm$ (i.e., $h(\wh{\D}\ie^\pm)\ceq \min\limits_{\x\in\partial \wh{\D}\ie^\pm}\la  \x-\mathbf{z}\ie^\pm,\nu {(x)}\ra _{\R^{n-1}}>0$,
		where $\nu$ is the outward unit normal to $\partial \wh{\D}\ie^\pm$), furthermore, there are positive constants $C_1$, $C_2$ such that
		\begin{gather*}
			h(\wh{\D}\ie^\pm)\geq C_1,\quad
			\mathrm{dist}(\mathbf{z}\ie^\pm,\partial \wh{\D}\ie^\pm)\geq C_2,
		\end{gather*}
		then \eqref{assump:shape2}  holds with a constant $C$ depending on $C_1$, $C_2$ and the dimension $n$. This result follows easily from the bound established by Bramble and Payne \cite{BP62}:
		\begin{gather*}
			{\Lambda_{\rm N}}(\Omega)\ge {\frac{nR_{\min}(\Omega)^{n-1} h(\Omega)}
				{2(R_{\max}(\Omega))^2\big[(R_{\max}(\Omega))^n+ \frac{2}{n}R_{\min}(\Omega)^{n-1}h(\Omega)\big]}.}
		\end{gather*}
		Here $\Omega\subset\R^d$ is a strictly star-shaped domain with respect to the point $p$, $R_{\min}(\Omega)\ceq\min\limits_{x \in\partial \Omega} {\rm dist}(x,p)$,
		$R_{\max}(\Omega)\ceq\max\limits_{x \in\partial \Omega} {\rm dist}(x,p)$,
		$h(\Omega)\ceq \min\limits_{x\in\partial \Omega}\la  x-p,\nu {(x)}\ra _{\R^{d}}$,
		and $\nu$ is the outward unit normal to
		$\partial \Omega$.
	\end{itemize}
	
	The second block of assumptions addresses  those regarding the mutual arrangement of the sets $\D\ie^\pm$ and the sizes of $\D\ie^\pm$ and $T\ije$.
	We assume that for each $\eps>0$ there exist the numbers $ {\rho^\pm\ie\in (d^\pm\ie,1] }$, $i\in \I\e$ satisfying
	\begin{align}
		\label{assump:1}
		&\rho\e^\pm\ceq \sup_{i\in \I\e}\rho\ie^\pm\to 0\,\text{ as }\,\eps\to0,
		\\
		\label{assump:2}
		&
		\B_{n-1}({\rho^\pm\ie},\x^\pm\ie)\cap \B_{n-1}(\rho^\pm_{k,\eps},\x^\pm_{k,\eps}) =\emptyset,\; \forall i,k\in\I\e,\ i\not=k,
		\\[1mm]\label{assump:3}
		& \B_{n}({\rho^\pm\ie},x^\pm\ie)\cap\{x\in\R^n:\ \pm x^n>0\}\subset\Omega^\pm,\; \forall i\in\I\e,
		\\[1mm]
		&\label{assump:4}
		\sup_{i\in \I\e}(\rho\ie^\pm)^{n-1}q\ie^\pm\to 0\,\text{ as }\,\eps\to 0,\qquad  \text{cf.~Remark~\ref{rem:conditions}},
	\end{align}
	where  $x\ie^\pm\ceq(\x\ie^\pm,0)\in\Gamma$, and the quantities $q\ie^\pm$ are given by
	\begin{gather}\label{qie}
		q\ie^\pm\ceq
		\begin{cases}
			(d\ie^\pm)^{2-n},&n\ge 3,\\
			-\ln d\ie^\pm,&n=2.
		\end{cases}
	\end{gather}
	We also assume that the lengths of the passages $T\ije$ tend uniformly to zero, i.e.
	\begin{gather}
		\label{assump:5}
		h\e\ceq\sup_{(i,j)\in \L\e}h\ije\to 0\;\text{ as }\;\eps\to0.
	\end{gather}
	To formulate the last two assumptions, for $(i,j)\in \L\e$ we introduce {the} quantities $K\ije$  by
	\begin{gather}\label{Kij}
		K\ije\ceq \vol_{n-1}(\D\ie^+)h\ije^{-1}= \vol_{n-1}(\D_{j,\eps}^-)h\ije^{-1},\quad (i,j)\in \L\e
	\end{gather}
	(here $\vol_{n-1}(\cdot)$ means the volume of a domain in $\R^{n-1}$).
	Then our last assumptions read
	\begin{gather}\label{assump:main:1}
		\forall\eps>0\ \forall (i,j)\in  \L\e:\quad K\ije\leq C\min\left\{(\rho\ie^+)^{n-1},(\rho\je^-)^{n-1}\right\},\\\label{assump:main:2}
		\sum_{(i,j)\in \L\e} K\ije v(x\ie^+,x\je^-)\to\int_{\Gamma\times\Gamma}K(x,y)v(x,y)\d s_x \d s_y\,\text{ as }\,\eps\to0,\; \forall v\in C(\overline{\Gamma}\times\overline{\Gamma}),
	\end{gather}
	where $K\in C(\overline{\Gamma}\times\overline{\Gamma})$ is the function appearing in the definition of   $\H$.
	Note that
	\begin{align}\notag
		\forall i\in\I\e:\quad
		h\ije K\ije  (\rho\ie^+)^{1-n}&=\vol_{n-1}(\D\ie^+)(\rho\ie^+)^{1-n}=
		\vol_{n-1}(\wh \D\ie^+)
		(d\ie^+)^{n-1}(\rho\ie^+)^{1-n}\\&\geq
		C(d\ie^+)^{n-1}(\rho\ie^+)^{1-n},\text{ where }j=\ell\e(j).\label{hKest+}
	\end{align} 
	where {at} the last step we use \eqref{assump:shape1}.
	Similarly, one has
	\begin{align}\label{hKest-}
		\forall j\in\I\e:\quad
		h\ije K\ije  (\rho\je^-)^{1-n}\geq
		C(d\je^-)^{n-1}(\rho\je^-)^{1-n},\text{ where }i=\ell\e^{-1}(j).
	\end{align} 
	From \eqref{assump:5}, \eqref{assump:main:1}, \eqref{hKest+}, \eqref{hKest-} we deduce
	\begin{gather}\label{drho}
		\sup_{i\in\I\e} d\ie^\pm (\rho\ie^\pm)^{-1}\leq \sqrt[n-1]{h\e}\, \to 0\,\text{ as }\,\eps\to0.
	\end{gather}
	
	In Section~\ref{sec:3},  we {will} present an example where all the aforementioned properties hold.
	
	\begin{remark}\label{rem:conditions}
		If the assumptions \eqref{assump:1}--\eqref{assump:3} hold with some $\rho\ie^\pm=\wt\rho\ie^\pm$, then they obviously holds with any $\rho\ie^{\pm}$  satisfying
		$d\ie^\pm<\rho\ie^\pm<\wt\rho\ie^\pm$. It is easy to see that  we can always choose $\rho\ie^\pm\in (d\ie^\pm,\wt\rho\ie^\pm)$ to be sufficiently small  in order to fulfill \eqref{assump:4} (even if \eqref{assump:4} does not hold with $\wt\rho\ie^\pm$). {Hence at a} glance, this might suggest that the assumption \eqref{assump:4} is redundant. However, this is not the case when we take into account the last two assumptions \eqref{assump:main:1}--\eqref{assump:main:2}. Indeed, using \eqref{assump:main:1}, we get the following estimate for the left-hand-side of \eqref{assump:main:2}:
		\begin{gather*}
			\bigg|\sum_{(i,j)\in \L\e} K\ije v(x\ie^+,x\je^-)\bigg| \le C\sum_{(i,j)\in\L\e}|K\ije| \leq
			C_1\sum_{i\in\I\e}( \rho\ie^\pm)^{n-1}\le
			C_2\sum_{i\in\I\e}\vol_{n-1}(\B_{n-1}( \rho\ie^\pm,\x\ie^\pm)).
		\end{gather*}
		Thus, to ensure that the limiting function $K(x,y)$ is nonzero, we cannot allow $\rho\ie^\pm$ to become small --- the total volume of the pairwise disjoint balls $\B_{n-1}(\rho\ie^\pm,\x\ie^\pm)$ must not tend to zero.
	\end{remark}
	
	We denote by $\H\e$ the Neumann Laplacian in $M\e$, i.e. the operator acting in   $L^2(M\e)$  being associated with the sesquilinear form
	\begin{align*}
		\h\e[u,v]&=
		(\nabla u,\nabla v)_{L^2(M\e)}\\
		&=
		\ds\int_{M^+}
		\suml_{k=1}^n \frac{\partial u}{\partial x^k} \frac{\partial \overline{ v}}{\partial x^k} \d x+\int_{M^-}
		\suml_{k=1}^n \frac{\partial u}{\partial x^k} \frac{\partial \overline{ v}}{\partial x^k} \d x+\sum_{(i,j)\in \L\e}\int_{T\ije}
		\suml_{k=1}^n \frac{\partial u}{\partial x^k} \frac{\partial \overline{ v}}{\partial x^k} \d x,
	\end{align*}
	on the domain $\dom(\h\e)= H^1(M\e)$.
	
	Since the operators $\H\e$ and $\H$ act in different Hilbert spaces $L^2(M\e)$
	and $L^2(\Omega)$, we need a suitable identification operator $\J\e:L^2(M\e)\to L^2(\Omega)$.
	The natural choice for this operator is as follows,
	\begin{gather}\label{Je}
		(\J\e f)(x^1,\dots,x^{n-1},x^n)=
		\begin{cases}
			f(x^1,\dots,x^{n-1},x^n+1),&  x=(x^1,\dots,x^n)\in\Omega^+ ,\\
			f(x^1,\dots,x^{n-1},x^n-1),&  x=(x^1,\dots,x^n)\in\Omega^- .
		\end{cases}
	\end{gather}
	The dependence of this operator on $\eps$ is merely formal --
	it just `drags' $f$ from $M^+\cup M^-$ to $\Omega^+\cup \Omega^-$,
	while the values of $f$ on $T\ije$ do not affect $\J\e f$.
	Note that $\J\e f\in H^1(\Omega\setminus\Gamma)$ provided $f\in H^1(M\e)$.
	
	We are now in position to formulate the main results.
	
	\begin{theorem}\label{th1}
		Assume that the conditions 
		\eqref{assump:shape1}--\eqref{assump:4}, \eqref{assump:5},
		\eqref{assump:main:1}, \eqref{assump:main:2} hold true.
		Let $\{f\e\in L^2(M\e)\}_{\eps}$ be a family of functions satisfying
		\begin{gather}\label{assump:f1}
			\lim_{\eps\to 0}\sum_{(i,j)\in \L\e}\|f\|^2_{L^2(T\ije)}=0,\\\label{assump:f2}
			\J\e f\e \rightharpoonup  f\text{ in }L^2(\Omega)\,\text{ as }\,\eps\to 0,\; f\in L^2(\Omega).
		\end{gather}
		Then, the following holds:
		\begin{gather}\label{th1:conv}
			\J\e{(\H\e +\Id )^{-1}}f\e \rightharpoonup {(\H+\Id )^{-1}}f\,\text{ in }\,H^1(\Omega\setminus\Gamma)\,\text{ as }\,\eps\to0.
		\end{gather}
	\end{theorem}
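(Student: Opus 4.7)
Write $u\e\ceq(\H\e+\Id)^{-1}f\e$, so that $u\e\in H^1(M\e)$ solves
\begin{gather*}
\h\e[u\e,\varphi]+(u\e,\varphi)_{L^2(M\e)}=(f\e,\varphi)_{L^2(M\e)},\quad \varphi\in H^1(M\e).
\end{gather*}
By \eqref{assump:f1}--\eqref{assump:f2} the family $\{f\e\}$ is bounded in $L^2(M\e)$; testing with $\varphi=u\e$ gives the uniform bound $\|u\e\|_{H^1(M\e)}\le C$, hence $\|\J\e u\e\|_{H^1(\Omega\setminus\Gamma)}\le C$. The Rellich--Kondrashov theorem and the compactness of the traces $H^1(\Omega^\pm)\hookrightarrow L^2(\Gamma)$ supply, along a subsequence, $\J\e u\e\rightharpoonup u^*$ weakly in $H^1(\Omega\setminus\Gamma)$, strongly in $L^2(\Omega)$, with $(\J\e u\e)^\pm\to u^{*\pm}$ strongly in $L^2(\Gamma)$. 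An elementary integration along the short $x^n$-direction on each passage, using \eqref{assump:5} and the trace bound on $\Gamma^\pm$, gives in addition $\sum_{(i,j)\in\L\e}\|u\e\|^2_{L^2(T\ije)}\to 0$, so passage values do not contribute to $u^*$.

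By uniqueness of the resolvent it suffices to show that $u^*$ satisfies
\begin{gather*}
\h[u^*,v]+(u^*,v)_{L^2(\Omega)}=(f,v)_{L^2(\Omega)}
\end{gather*}
for every $v$ in the dense subspace $C^1(\overline{\Omega^+})\oplus C^1(\overline{\Omega^-})$ of $\dom(\h)$; this upgrades the subsequential limit to the full family \eqref{th1:conv}. For such a $v$ I construct $v\e\in H^1(M\e)$ as follows: put $v\e=v$ on $M^\pm\setminus\bigcup_i\B_n(\rho\ie^\pm,x\ie^\pm)$ (after the translation by $\pm e_n$); inside each half-ball $\B_n(\rho\ie^\pm,x\ie^\pm)\cap M^\pm$ interpolate $v$ down to the constant $v^\pm(x\ie^\pm)$ via a capacitary cut-off so that $v\e\equiv v^\pm(x\ie^\pm)$ on $D\ie^\pm$; and on each passage $T\ije$ take $v\e$ linear in $x^n$ between the face values $v^+(x\ie^+)$ and $v^-(x\je^-)$. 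This is continuous across the identifications of $M\e$ and a direct computation yields
\begin{gather*}
\int_{T\ije}|\nabla v\e|^2\d x=K\ije\,|v^+(x\ie^+)-v^-(x\je^-)|^2.
\end{gather*}
The extra gradient energy added by the cut-offs inside the half-balls is $o(1)$: since the balls $\B_{n-1}(\rho\ie^\pm,\x\ie^\pm)$ are pairwise disjoint in the bounded set $\Ga$ (cf.~\eqref{assump:2}), one has $\sum_i(\rho\ie^\pm)^n\le\rho\e^\pm\sum_i\vol_{n-1}(\B_{n-1}(\rho\ie^\pm,\x\ie^\pm))\to 0$ by \eqref{assump:1}. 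Hence $\J\e v\e\to v$ in $H^1(\Omega\setminus\Gamma)$ and $\|v\e\|_{L^2(\cup T\ije)}\to 0$.

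Inserting $\varphi=v\e$ into the weak formulation for $u\e$ and passing to the limit, the bulk integrals on $M^\pm$ converge to their $\Omega^\pm$-analogues via the weak-strong pairings above, while the inner products $(u\e,v\e)_{L^2(M\e)}$ and $(f\e,v\e)_{L^2(M\e)}$ converge to $(u^*,v)_{L^2(\Omega)}$ and $(f,v)_{L^2(\Omega)}$ by \eqref{assump:f1}--\eqref{assump:f2} and the passage-mass control. The remaining passage gradient contribution, using that $\nabla v\e$ on each $T\ije$ has only a constant $x^n$-component, reduces after integration in $x^n$ to
\begin{gather*}
\sum_{(i,j)\in\L\e}K\ije\bigl(\langle u\e\rangle_{S\ie^+}-\langle u\e\rangle_{S\je^-}\bigr)\overline{\bigl(v^+(x\ie^+)-v^-(x\je^-)\bigr)},
\end{gather*}
where $\langle\cdot\rangle_S$ denotes the surface mean. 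The heart of the argument --- and the main obstacle --- is to show that, weighted by $K\ije$, the means $\langle u\e\rangle_{S\ie^\pm}$ may be replaced by the limit trace value $u^{*\pm}(x\ie^\pm)$ with a summed error that vanishes. This is accomplished through a uniform capacity-type estimate of Poincar\'e--trace flavour, valid on $\B_n(\rho\ie^\pm,x\ie^\pm)\cap M^\pm$, of the form
\begin{gather*}
\bigl|\langle u\e\rangle_{D\ie^\pm}-\langle u\e\rangle_{\B_n(\rho\ie^\pm,x\ie^\pm)\cap M^\pm}\bigr|^2\le C\,q\ie^\pm\,\|\nabla u\e\|^2_{L^2(\B_n(\rho\ie^\pm,x\ie^\pm)\cap M^\pm)},
\end{gather*}
whose constant is controlled by the shape assumptions \eqref{assump:shape1}--\eqref{assump:shape2}. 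Multiplying by $K\ije\le C(\rho\ie^\pm)^{n-1}$ from \eqref{assump:main:1} and summing yields a total error bounded by $\bigl(\sup_i(\rho\ie^\pm)^{n-1}q\ie^\pm\bigr)\cdot\|\nabla u\e\|^2_{L^2(M\e)}$, which vanishes precisely thanks to \eqref{assump:4} and the a priori bound. After a density approximation to accommodate that $u^{*\pm}\in H^{1/2}(\Gamma)$ is only defined almost everywhere, assumption \eqref{assump:main:2} applied to the continuous kernel $(x,y)\mapsto(u^{*+}(x)-u^{*-}(y))\overline{(v^+(x)-v^-(y))}$ sends the passage sum to $\int_{\Gamma\times\Gamma}K(x,y)(u^{*+}(x)-u^{*-}(y))\overline{(v^+(x)-v^-(y))}\d s_x\d s_y$. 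Collecting terms gives the sought identity for $u^*$, proving $u^*=(\H+\Id)^{-1}f$ and, by uniqueness, \eqref{th1:conv}.
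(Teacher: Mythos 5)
Your argument mirrors the paper's proof in all essentials: the same test-function construction (affine cut-offs in half-balls around $\wt x\ie^\pm$, linear interpolation along the passages), the same weak-formulation limit passage, the same capacity/Poincar\'e--trace estimate comparing $\la u\e\ra_{D\ie^\pm}$ with larger-scale means at a cost $q\ie^\pm\|\nabla u\e\|^2$ (the paper's Lemma~\ref{lemma:main}, assembled from Lemmata~\ref{lemma:aux1}--\ref{lemma:aux5}), and the same $\delta$-approximation device to handle the fact that the trace of the limit is only in $H^{1/2}(\Gamma)$ before invoking~\eqref{assump:main:2}. Your presentation compresses the paper's four-term split $I_{31,\eps},\dots,I_{34,\eps}$ of the passage term into a single displayed estimate plus a remark about density, but the decomposition, the key lemma, and the roles of assumptions~\eqref{assump:main:1}, \eqref{assump:4}, \eqref{assump:main:2} are all correctly identified, so this is the same proof at a coarser level of detail.
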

	
	\begin{remark}
		By Rellich-Kondrashev theorem, \eqref{th1:conv} implies
		\begin{gather*}
			\J\e{(\H\e +\Id )^{-1}}f\e \to {(\H+\Id )^{-1}}f\,\text{ in }\,L^2(\Omega)\,\text{ as }\,\eps\to0.
		\end{gather*}
		Hence, Theorem~\ref{th1} establishes a kind of strong resolvent convergence of the operators $\H\e$ to operator $\H$ as $\eps\to0$.
	\end{remark}
	The second result addresses the convergence of spectra.
	Since the manifold $M\e$ is compact, the spectrum of the operator $\H$ is purely discrete. We denote by $\{\lambda\ke,\ k\in\N\}$ the sequence of eigenvalues of the operator $\H\e$ ordered in ascending order and counted with multiplicities. We denote by $\{u\ke,\ k\in\N\}$ the associated sequence of eigenfunctions such that $(u_{k,\eps},u_{\ell,\eps})_{L^2(M\e)}=\delta_{k\ell}$. Similarly, by $\{\lambda_k ,\ k\in\N\}$ we denote the sequence of eigenvalues of the operator $\H$.
	\begin{theorem}\label{th2}
		Assume that the conditions 
		\eqref{assump:shape1}--\eqref{assump:4}, \eqref{assump:5},
		\eqref{assump:main:1}, \eqref{assump:main:2} hold true. Then one has
		\begin{gather}\label{th2:conv1}
			\forall k\in\N:\ \lambda\ke\to\lambda_k\,\text{ as }\,\eps\to0.
		\end{gather}
		Furthermore, if the eigenvalue $\lambda_j$ satisfies
		\begin{gather}	\label{lambdaj}
			\lambda_{j-1}<\lambda_j=\lambda_{j+1}=\dots=\lambda_{j+m-1}<\lambda_{j+m}
		\end{gather}
		(i.e., $\lambda_j$ has multiplicity $m$) and $u$ is an eigenfunction of $\H$ corresponding to $\lambda_j$, then there exists a sequence
		$u\e\in\mathrm{span}\{u_{k,\eps},\ k=j,\dots,j+m-1\}$ such that
		\begin{gather}\label{th2:conv2}
			\|\J\e u\e-u\|_{L^2(\Omega)}\to 0.
		\end{gather}
	\end{theorem}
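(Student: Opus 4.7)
\textbf{Proof plan for Theorem~\ref{th2}.} The strategy is to combine the resolvent convergence of Theorem~\ref{th1} with the classical min-max principle, following the standard template for spectral convergence in varying Hilbert spaces. I would first establish both bounds $\limsup_\eps \lambda\ke\le \lambda_k$ and $\liminf_\eps \lambda\ke\ge \lambda_k$, and then deduce~\eqref{th2:conv2} via a spectral decomposition argument.

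\textbf{Upper bound.} For fixed $k$ let $u_1,\dots,u_k$ be an $L^2(\Omega)$-orthonormal family of eigenfunctions of $\H$ corresponding to $\lambda_1,\dots,\lambda_k$. For each $l$ I would build a quasimode $\wt u_l^\eps\in H^1(M\e)$ by setting $\wt u_l^\eps(x)\ceq u_l(x\mp e_n)$ on $M^\pm$ and, on each passage $T\ije$, interpolating linearly in $x^n$ between the two traces of $u_l^\pm$ on the bases $D\ie^+$, $D\je^-$ (pulled back via the congruence in~\eqref{Tije}). The bulk contribution to $\h\e[\wt u_l^\eps]$ equals $\int_{\Omega^+}|\nabla u_l|^2+\int_{\Omega^-}|\nabla u_l|^2$, while, after a preliminary $H^1$-approximation of $u_l^\pm|_\Gamma$ by continuous functions, the passage contribution reduces to $\sum_{(i,j)\in\L\e}K\ije|u_l^+(x\ie^+)-u_l^-(x\je^-)|^2+o(1)$ and converges by~\eqref{assump:main:2} to the non-local $K$-term in $\h[u_l]$; hence $\h\e[\wt u_l^\eps]\to\h[u_l]=\lambda_l$. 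The $L^2(T\ije)$-mass of $\wt u_l^\eps$ is of order $h\e$ and negligible by~\eqref{assump:5}, so $(\wt u_l^\eps,\wt u_m^\eps)_{L^2(M\e)}\to\delta_{lm}$ while $\J\e\wt u_l^\eps=u_l$ exactly. A Gram--Schmidt correction together with the min-max principle for $\H\e$ yields $\limsup_\eps\lambda\ke\le\lambda_k$.

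\textbf{Lower bound.} The upper bound forces $\lambda\ke\le C$ uniformly in $\eps$ for each fixed $k$, so the $L^2(M\e)$-orthonormal eigenfunctions $u_{1,\eps},u_{2,\eps},\ldots$ satisfy $\h\e[u_{l,\eps}]\le C$. The critical technical estimate is that their mass on the passages vanishes: a one-dimensional Poincaré-type bound along each tube,
\begin{equation*}
\|u_{l,\eps}\|_{L^2(T\ije)}^2\le 2h\ije\|u_{l,\eps}\|_{L^2(S\ie^+)}^2+2h\ije^2\|\nabla u_{l,\eps}\|_{L^2(T\ije)}^2,
\end{equation*}
combined with the trace bound $\sum_{i}\|u_{l,\eps}\|_{L^2(D\ie^+)}^2\le C\|u_{l,\eps}\|_{H^1(M^+)}^2$ and~\eqref{assump:5}, yields $\sum_{(i,j)\in\L\e}\|u_{l,\eps}\|_{L^2(T\ije)}^2\to 0$. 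Thus $\J\e u_{l,\eps}$ is bounded in $H^1(\Omega\setminus\Gamma)$ and, along a subsequence, converges weakly there and strongly in $L^2(\Omega)$ to some $v_l$; moreover $f\e\ceq u_{l,\eps}$ satisfies~\eqref{assump:f1}--\eqref{assump:f2}. Extracting a further subsequence with $\lambda\ke\to\mu_l$ and applying Theorem~\ref{th1} to the identity $u_{l,\eps}=(\lambda\ke+1)(\H\e+\Id)^{-1}u_{l,\eps}$ identifies $v_l$ as an eigenfunction of $\H$ with eigenvalue $\mu_l$. The passage-mass estimate also preserves the orthonormality of the $\{v_l\}$ in $L^2(\Omega)$, so the min-max principle for $\H$ gives $\lambda_k\le\mu_k=\liminf_\eps\lambda\ke$.

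\textbf{Eigenfunction convergence.} For $\lambda_j$ satisfying~\eqref{lambdaj} and an eigenfunction $u$ of $\H$ for $\lambda_j$, let $\wt u^\eps$ be the lift from the upper-bound step (so that $\J\e\wt u^\eps=u$) and let $P\e$ denote the $L^2(M\e)$-orthogonal projector onto $\mathrm{span}\{u_{k,\eps}:k=j,\dots,j+m-1\}$. Expanding $\wt u^\eps=\sum_k c_k(\eps) u_{k,\eps}$, the passage-mass estimate yields $c_k(\eps)=(u,\J\e u_{k,\eps})_{L^2(\Omega)}+o(1)\to (u,v_k)_{L^2(\Omega)}$, which vanishes for $k\notin\{j,\dots,j+m-1\}$ by self-adjointness of $\H$ (since then $v_k$ is an eigenfunction for $\lambda_k\ne\lambda_j$). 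Moreover $\{v_k:k=j,\dots,j+m-1\}$ is an orthonormal basis of the $\lambda_j$-eigenspace, so $\sum_{k\in\{j,\dots,j+m-1\}}|c_k(\eps)|^2\to\|u\|_{L^2(\Omega)}^2$. Combined with $\sum_k|c_k(\eps)|^2=\|\wt u^\eps\|_{L^2(M\e)}^2\to\|u\|_{L^2(\Omega)}^2$ and the tail bound $\sum_{k\ge K}|c_k(\eps)|^2\le \h\e[\wt u^\eps]/\lambda\ke$, this forces $\|(I-P\e)\wt u^\eps\|_{L^2(M\e)}^2\to 0$. Setting $u\e\ceq P\e\wt u^\eps$ then gives $\J\e u\e\to u$ in $L^2(\Omega)$, which is~\eqref{th2:conv2}. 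The principal technical obstacle throughout is the passage-mass estimate in the lower-bound step, since it is what permits Theorem~\ref{th1} to be applied as a black box; the remaining manipulations are standard variational/spectral analysis.
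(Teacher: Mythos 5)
Your proposal is correct in spirit but takes a genuinely different route from the paper. The paper proves Theorem~\ref{th2} by invoking the abstract spectral-convergence scheme of Iosif'yan--Oleinik--Shamaev \cite{IOS89} (recalled as Theorem~\ref{th:IOS}): one sets $\mathscr{B}\e=(\H\e+\Id)^{-1}$, $\mathscr{B}=(\H+\Id)^{-1}$, introduces the lift $\mathscr{L}\e$ (right inverse of $\J\e$, extended by zero on the tubes), and verifies the four conditions $\rm(C_1)$--$\rm(C_4)$; the verification of $\rm(C_3)$ and $\rm(C_4)$ uses precisely Theorem~\ref{th1} plus the passage-mass bound in Lemma~\ref{lemma:main2}, which is the $\sum_{(i,j)}\|\cdot\|^2_{L^2(T\ije)}\le Ch\e\|\cdot\|^2_{H^1(M\e)}$ estimate you also derive (in fact your in-line one-dimensional Poincaré argument is identical to the proof of that lemma). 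You instead run a hands-on min-max/quasimode argument. The two approaches rely on the same two pillars (resolvent convergence of Theorem~\ref{th1} and the passage-mass estimate); what the abstract scheme buys is that one never has to construct quasimodes from eigenfunctions of the \emph{limit} operator $\H$, only to test strong convergence of the resolvents against arbitrary data, so no regularity of $\H$-eigenfunctions is ever required.

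That last point is where your argument is thinner than the paper's. In the upper bound you interpolate linearly across the tubes between the actual traces $u_l^\pm\restr_\Gamma$, but for $u_l\in H^1(\Omega\setminus\Gamma)$ these traces are only in $H^{1/2}(\Gamma)$, so the tangential gradient of the interpolant in $T\ije$ is not obviously $L^2$-controlled and the claim $\h\e[\wt u_l^\eps]\to\h[u_l]$ does not follow without more work. Your remedy (``preliminary $H^1$-approximation of $u_l^\pm\restr_\Gamma$ by continuous functions'') is gestured at but would need to be fleshed out into a genuine density/diagonal argument, and one must also check that the approximation preserves the near-orthonormality that min-max requires. A cleaner fix, consistent with what the paper does in the proof of Theorem~\ref{th1}, is to freeze the quasimode to the constant value $u_l(\wt x\ie^\pm)$ near each $D\ie^\pm$ using the cut-off functions $\phi\ie^\pm$ supported on the half-balls $Y\ie^\pm$; then the tube interpolation is between two constants, the tangential energy vanishes identically, and the cut-off correction is negligible by the same estimate used for the terms $I_{2,\eps}^\pm$ there. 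The lower-bound and eigenfunction-convergence steps of your proposal are sound (modulo the standard subsequence-extraction-plus-uniqueness housekeeping), and your tail-sum bound via $\h\e[\wt u^\eps]/\lambda_{K,\eps}$ is the right device to localize the spectral mass; just note that it requires the already-established lower bound $\liminf\lambda_{K,\eps}\ge\lambda_K$ with $\lambda_K\to\infty$, so the logical ordering upper bound $\to$ lower bound $\to$ eigenfunctions must be kept.
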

	
	\medskip
	
	Finally, we {shall} examine the behavior of solutions $v\e(t)={\exp(-\H\e t)f\e}$ to the {associated} Cauchy problem
	\begin{gather*}
		{\ds{\partial v\e\over \partial t}+\H\e v\e = 0}, \;\;\ t>0,\quad
		v\e(0)=f\e.
	\end{gather*}
	\begin{theorem}\label{th3}
		Assume that the conditions 
		\eqref{assump:shape1}--\eqref{assump:4}, \eqref{assump:5},
		\eqref{assump:main:1}, \eqref{assump:main:2} hold true.
		Let $\{f\e\in H^1(M\e)\}_{\eps}$ be a family of functions satisfying the assumptions \eqref{assump:f1}, \eqref{assump:f2}, moreover
		\begin{gather}\label{fe:bound}
			\|f\e\|_{H^1(M\e)}\leq C.
		\end{gather}
		Then the following holds:
		\begin{gather*}
			\forall T>0:\quad \lim_{\eps\to 0} \max_{t\in[0,T]}
			{\big\|\J\e(\exp(-\H\e t)f\e)
				-\exp(-\H t)f\big\|_{L^2(\Omega)}}=0.
		\end{gather*}
	\end{theorem}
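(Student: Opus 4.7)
My approach is to combine the spectral decompositions of $\H\e$ and $\H$ with a tail cutoff (made uniform in $t\in[0,T]$ by the $H^1$-bound~\eqref{fe:bound}) and a projector-convergence argument drawn from Theorems~\ref{th1}--\ref{th2}. Setting $c\ke\ceq(f\e,u\ke)_{L^2(M\e)}$, the estimate $\h\e[f\e,f\e]=\sum_k\lambda\ke|c\ke|^2\le C^2$ implied by~\eqref{fe:bound} gives $\sum_{k>N}|c\ke|^2\le C^2\lambda_{N+1,\eps}^{-1}$. Since $\exp(-\H\e t)$ is a contraction on $L^2(M\e)$ and $\|\J\e v\|_{L^2(\Omega)}\le\|v\|_{L^2(M\e)}$, I obtain, uniformly in $t\ge 0$,
\[
\Big\|\J\e\exp(-\H\e t)f\e-\sum_{k=1}^N e^{-\lambda\ke t}c\ke\J\e u\ke\Big\|_{L^2(\Omega)}\le C\lambda_{N+1,\eps}^{-1/2}.
\]
By Theorem~\ref{th2}, $\lambda_{N+1,\eps}\to\lambda_{N+1}\to\infty$, so this tail is uniformly small by first picking $N$ large and then $\eps$ small. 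The analogous tail bound for $\exp(-\H t)f$ uses $\|f\|_{H^1(\Omega\setminus\Gamma)}\le C$, which holds by weak lower semicontinuity since $\|\J\e f\e\|_{H^1(\Omega\setminus\Gamma)}\le\|f\e\|_{H^1(M\e)}\le C$.

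The theorem then reduces to showing, for each fixed $N$ and uniformly in $t\in[0,T]$, that $\J\e\sum_{k=1}^N e^{-\lambda\ke t}c\ke u\ke\to\sum_{k=1}^N e^{-\lambda_k t}d_k\phi_k$ in $L^2(\Omega)$, where $\{\phi_k\}$ is an orthonormal eigenbasis of $\H$ and $d_k\ceq(f,\phi_k)_{L^2(\Omega)}$. I group the first $N$ eigenvalues of $\H$ into distinct clusters $\mu_1<\dots<\mu_m$ with spectral projectors $P_\ell$, and denote by $P_{\ell,\eps}$ the spectral projector of $\H\e$ onto $\mathrm{span}\{u\ke:\lambda\ke\in I_\ell\}$, where $I_\ell$ is a fixed small neighbourhood of $\mu_\ell$ disjoint from the other clusters; by Theorem~\ref{th2}, $I_\ell$ isolates exactly those $\H\e$-eigenvalues tending to $\mu_\ell$ for all sufficiently small $\eps$. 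The estimate $|e^{-\lambda\ke t}-e^{-\mu_\ell t}|\le T|\lambda\ke-\mu_\ell|$ replaces the partial sum by $\sum_\ell e^{-\mu_\ell t}P_{\ell,\eps}f\e$ up to an error $o(1)$ uniform in $t\in[0,T]$, and the whole argument reduces to the \emph{projector convergence}
\begin{equation}\label{plan:proj}
\J\e P_{\ell,\eps}f\e\to P_\ell f\quad\text{in }L^2(\Omega)\quad\text{for each }\ell.
\end{equation}

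To prove~\eqref{plan:proj}, I fix an orthonormal basis $\{\phi_{\ell,a}\}_{a=1}^{m_\ell}$ of $\ker(\H-\mu_\ell)$ and, via Theorem~\ref{th2}, pick $v_{\ell,a,\eps}\in\mathrm{ran}(P_{\ell,\eps})$ with $\J\e v_{\ell,a,\eps}\to\phi_{\ell,a}$ in $L^2(\Omega)$. A crucial auxiliary estimate is that $\sum_{(i,j)\in\L\e}\|w\e\|^2_{L^2(T\ije)}\to 0$ whenever $\{w\e\}$ is bounded in $H^1(M\e)$; this follows from a cylindrical Poincar\'e--trace inequality
\[
\|w\|^2_{L^2(T\ije)}\le 2h\e\|w\|^2_{L^2(D\ie^+)}+h\e^2\|\nabla w\|^2_{L^2(T\ije)}
\]
summed over passages, combined with the $H^1(M^+)\to L^2(\Gamma^+)$ trace bound and~\eqref{assump:5}. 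Applied to $\{v_{\ell,a,\eps}\}$ (which are bounded in $H^1(M\e)$ since the corresponding eigenvalues stay near $\mu_\ell$), this yields $(v_{\ell,a,\eps},v_{\ell,b,\eps})_{L^2(M\e)}\to\delta_{ab}$, so after a Gram--Schmidt correction $\{v_{\ell,a,\eps}\}$ becomes an orthonormal basis of $\mathrm{ran}(P_{\ell,\eps})$ still converging to $\{\phi_{\ell,a}\}$ under $\J\e$. Expanding $P_{\ell,\eps}f\e=\sum_a (f\e,v_{\ell,a,\eps})_{L^2(M\e)}\,v_{\ell,a,\eps}$ and splitting
\[
(f\e,v_{\ell,a,\eps})_{L^2(M\e)}=(\J\e f\e,\J\e v_{\ell,a,\eps})_{L^2(\Omega)}+\sum_{(i,j)\in\L\e}(f\e,v_{\ell,a,\eps})_{L^2(T\ije)},
\]
the passage sum vanishes by~\eqref{assump:f1} together with the vanishing passage-mass applied to $v_{\ell,a,\eps}$, while the first summand tends to $(f,\phi_{\ell,a})_{L^2(\Omega)}$ by the weak--strong pairing $\J\e f\e\rightharpoonup f$, $\J\e v_{\ell,a,\eps}\to\phi_{\ell,a}$; this proves~\eqref{plan:proj}.

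The main obstacle is precisely~\eqref{plan:proj}: upgrading the per-eigenfunction information of Theorem~\ref{th2} to norm convergence of finite-rank spectral projectors in the presence of a non-isometric identification $\J\e$, which forces explicit $L^2$-mass control on the passages via the sieve geometry. Once~\eqref{plan:proj} is established, the uniformity in $t\in[0,T]$ comes for free from the spectral calculus, and the tail cutoff completes the argument.
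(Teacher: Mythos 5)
Your proof is correct, but it follows a genuinely different route than the paper's. The paper proves Theorem~\ref{th3} by a classical parabolic compactness argument: it derives uniform-in-$\eps$ energy estimates for $v\e(t)=\exp(-\H\e t)f\e$ including a bound on $\int_0^T\|\partial_t v\e\|^2_{L^2(M\e)}\d\tau$, shows that $v\e^\pm$ is bounded in $H^1(\Omega^\pm\times[0,T])$ and equicontinuous in $C([0,T];L^2(\Omega^\pm))$, invokes Arzel\`a--Ascoli to extract a subsequence converging uniformly on $[0,T]$, and passes to the limit in the time-dependent weak formulation by reusing the very same test function $w\e$ from \eqref{we:test} constructed for the proof of Theorem~\ref{th1}, finishing with a uniqueness argument for the limit Cauchy problem. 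Your approach avoids the parabolic machinery entirely: you truncate the spectral series with a tail made uniform in $t$ by the $H^1$-bound \eqref{fe:bound}, and reduce to the convergence of finitely many cluster-wise spectral projectors, obtained by upgrading the eigenfunction convergence of Theorem~\ref{th2} via the passage-mass estimate of Lemma~\ref{lemma:main2}. The paper's route is more robust (it reuses the Theorem~\ref{th1} infrastructure and would survive a continuous spectrum), whereas yours is shorter given Theorems~\ref{th1}--\ref{th2} and exposes more transparently where the uniformity in $t$ comes from. One detail you glossed over: the $L^2(M\e)$-boundedness of $v_{\ell,a,\eps}$ does not follow directly from ``the corresponding eigenvalues stay near $\mu_\ell$'' --- you only control $\h\e$ relative to $\|\cdot\|_{L^2(M\e)}$, which is exactly what you are trying to bound. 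The fix is to normalize first: set $\wh v_{\ell,a,\eps}\ceq v_{\ell,a,\eps}/\|v_{\ell,a,\eps}\|_{L^2(M\e)}$, observe $\|\wh v_{\ell,a,\eps}\|_{H^1(M\e)}$ is bounded by the eigenvalue cluster bound, apply Lemma~\ref{lemma:main2} to conclude its passage mass vanishes, and then use $\|\J\e v_{\ell,a,\eps}\|_{L^2(\Omega)}\to\|\phi_{\ell,a}\|=1$ to deduce $\|v_{\ell,a,\eps}\|_{L^2(M\e)}\to 1$; the remainder of your argument then goes through unchanged.
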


	\section{Example}\label{sec:3}
	
	In this section, we present an example of a manifold that satisfies all the assumptions required for the main theorems. {We begin with} some preliminary considerations. For $s=(s^1,\dots,s^{n-1})\in\Z^{n-1}$ we denote by $\Ga_{s,\eps}$ the cube in $\R^{n-1}$ with a side length $\eps$ and the center at $\eps s$, being oriented along the coordinate axes:
	\begin{gather*}
		\Ga_{s,\eps}=\big\{\x=(x^1\dots,x^{n-1})\in\R^{n-1}:\ |x^k-\eps s^k|<\eps/2,\ k=1,\dots,n-1\big\}.
	\end{gather*}
	By $\Z\e\subset \Z^{n-1}$ we denote the set of  indices $s\in\Z^{n-1}$  such that 
	\begin{gather}\label{eps:neighb}
		\{x\in\R^n:\ (x^1,\dots,x^{n-1})\subset \Ga_{s,\eps},\ x^n\in (-\eps,\eps)\}\subset \Omega. 
	\end{gather}
	Next, within each set {$\Ga_{s,\eps}$} we determine another $\# \Z\e$ {cubes} $\mathbf \Gamma_{s,t,\eps}$ {with} $t\in\Z\e$:
	\begin{gather*}
		\forall s,t\in \Z\e:\
		\Ga_{s,t,\eps}\ceq\eps L^{-1}\mathbf \Gamma_{t,\eps}+\eps s,
	\end{gather*}
	where $L$ is the {edge} length of the smallest cube in $\R^{n-1}$ oriented along the coordinate axes and containing {the set} $\Ga $; without loss of generality, we assume that this smallest cube is centered at the origin.
	Each cube $\Ga_{s,t,\eps}$ has {edge} length $L^{-1}\eps^2$ and is centered at the point $\x_{s,t,\eps}\ceq    \eps^2 L^{-1}t+\eps s$; the presence of the factor $L^{-1}$ guarantees that $\cup_{t\in \Z\e}\Ga_{s,t,\eps}\subset\Ga_s$. Finally, let $\D_{s,t,\eps}$ with $(s,t)\in\Z\e\times\Z\e$ be the ball of the radius $\al_{s,t,\eps} d\e$ centered at $\x_{s,t,\eps}$ with $d\e>0$ satisfying
	\begin{gather}\label{assump:5+}
		\lim_{\eps\to 0}\, d\e\eps^{-2}=0,\\ \label{assump:4+}
		\lim_{\eps\to 0}\,\eps^{2(n-1)} d\e^{2-n}=0\,\text{ for }\,n\ge 3,
		\qquad
		\lim_{\eps\to 0}\,\eps^2\ln d\e=0\,\text{ for }\,n=2,
	\end{gather}
	and {with} $\al\ije>0$ being defined by
	\begin{gather}\label{al:ije}
		\al_{s,t,\eps}=(K( x_{s,t,\eps}, x_{t,s,\eps}))^{1/(n-1)},\text{ where }
		x_{s,t,\eps}\ceq(\x_{s,t,\eps},0)\in\Gamma
	\end{gather}
	(recall that $K\in C(\overline{\Gamma}\times\overline{\Gamma})$ is {the} function {appearing} in the definition of operator $\H$). {In view of the symmetry} \eqref{K:prop} {we have} $\al_{s,t,\eps}=\al_{t,s,\eps}$, so the balls $\D_{s,t,\eps}$ and $\D_{t,s,\eps}$ have equal radii.
	
	\begin{figure}[ht]
		\centering
		\includegraphics[width=\textwidth]{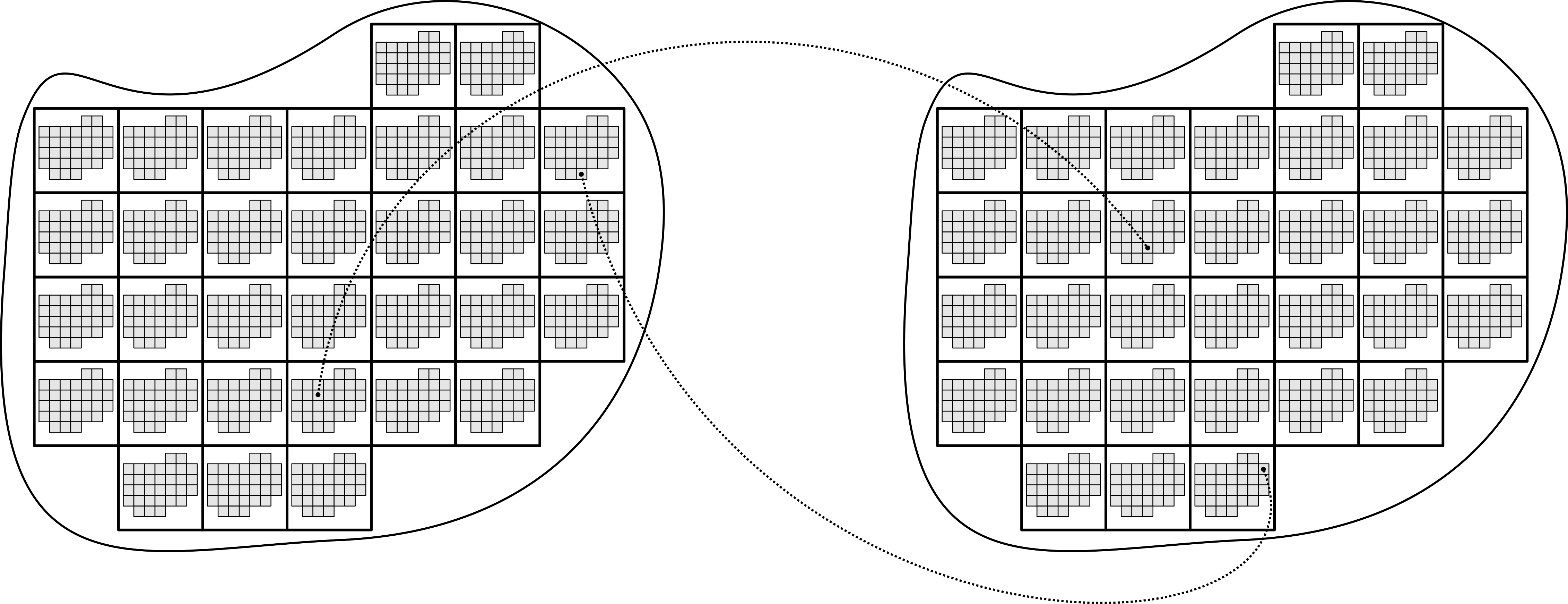}
		\caption{The sets $\Gamma^+$ (left) and $\Gamma^-$ (right) {and their cube subsets}; we present them side by side, rather than stacked vertically, to provide a clearer view. The big cubes correspond to the sets $\Ga_{s,\eps}\times\{1\}$ (left) and  $\Ga_{s,\eps}\times\{-1\}$ (right), while the small {gray} cubes correspond to  $\Ga_{s,t,\eps}\times\{1\}$ (left) and  $\Ga_{s,t,\eps}\times\{-1\}$ (right). {The set $D_{i,\eps}^\pm$, $\,i=(s,t)$, is placed in the center of the cube} $\Ga_{s,t,\eps}\times\{\pm1\}$  (we choose not to draw {the balls} to avoid overloading the figure). The dotted lines represent pairs of small squares such that the sets $D_{i,\eps}^+$, $i=(s,t)$  and $D_{j,\eps}^-$, $j=(t,s)$, located within these squares, are connected by the passage $T\ije$.}
	\end{figure}
	
	With these preliminaries, we can now {specify} the sets $\D\ie^\pm$ appearing in the definition of the manifold {$M\e$}. {To begin with, we introduce the index set} $\I\e$ as a subset of $\Z^{n-1}\times\Z^{n-1}$:
	$$
	\I\e\ceq \Z\e\times\Z\e.
	$$
	Then {we put}
	$$
	\D\ie^\pm\ceq \D_{s,t,\eps}\text{ for any $i=(s,t)\in\I\e$},
	$$
	and define the sets $D\ie^\pm$ by \eqref{Die}. 
	Since the sets $\D\ie^\pm$ are balls, we immediately obtain 
	\begin{gather}\label{xst}
		\x\ie^\pm=\x_{s,t,\eps},\ x\ie^\pm=x_{s,t,\eps},\ i=(s,t)\in \I\e.
	\end{gather}     
	We {introduce}   $\ell\e:\I\e\to \I\e$ as follows:
	\begin{gather*}
		\ell\e\text{ maps }i=(s,t)\text{ with }s,t\in\Z\e\text{ to }j=(t,s).
	\end{gather*}
	Evidently, $\ell\e$ is a bijective map.
	Let the passage $T\ije$ {with} $i\in\I\e$ {and} $j=\ell\e(i)$ be given by \eqref{Tije} with
	\begin{gather}\label{h:ije}
		h\ije\ceq {\vol_{n-1}(\B_{n-1}(1,0))}d\e^{n-1}{\eps^{2(1-n)}}.
	\end{gather}
	Using the map $\ell\e$, we glue each set $T\ije$ to $M^+$ (respectively, $M^-$) by identifying the upper face of $T\ije$ {with} $D^+\ie$ (respectively, the bottom face of $T\ije$ {with} $D^-_{j,\eps}$); {in this way} we arrive at the manifold $M\e$ defined by \eqref{Me}.
	
	Since the sets $\D\ie^\pm$ are balls, the corresponding {upscaled} sets $\wh \D\ie^\pm$ are unit balls obviously {satisfying} the assumptions \eqref{assump:shape1} and \eqref{assump:shape2}. Evidently, {relations} \eqref{assump:1}--\eqref{assump:3} hold true with
	\begin{gather}
		\label{rhoL}\rho\ie^\pm=L^{-1}\eps^2/2
	\end{gather}
	(in particular, the fulfillment of \eqref{assump:3} follows from \eqref{eps:neighb}).
	{In view of} \eqref{assump:4+}, the assumption \eqref{assump:4} {is satisfied} as well, {and the} assumption \eqref{assump:5} is fulfilled due to \eqref{assump:5+}. {By} simple manipulations 
	(see \eqref{Kij}, \eqref{al:ije}--\eqref{h:ije}), we obtain
	$$
	K\ije=K(  x_{s,t,\eps}, x_{t,s,\eps})\eps^{2(n-1)},\ i=(s,t)\in\I\e,\ j=(t,s)=\ell\e(i),
	$$
	so the assumption \eqref{assump:main:1} {is also fulfilled} (cf.~\eqref{rhoL}). Finally, we {shall} verify \eqref{assump:main:2}. Let $v\in C(\overline{\Gamma}\times\overline{\Gamma})$.
	By direct calculations we get
	\begin{align}\notag
		\sum_{(i,j)\in \L\e} K\ije v(x\ie^+,x\je^-)&=\sum_{(s,t)\in\Z\e\times\Z\e}K(   x_{s,t,\eps},  x_{t,s,\eps})v(  x_{s,t,\eps}, x_{t,s,\eps})\eps^{2(n-1)}.
		\\\label{Ke:action}
		&=\sum_{(s,t)\in\Z\e\times\Z\e}\int_{\Gamma_{s,\eps}\times\Gamma_{t,\eps}}K(   x_{s,t,\eps},  x_{t,s,\eps})v(  x_{s,t,\eps}, x_{t,s,\eps})\d s_x \d s_y,
	\end{align}
	where $\Gamma_{s,\eps}\ceq\{x\in\R^n:\ (x^1,\dots,x^{n-1})\in \Ga_{s,\eps},\ x^n=0\}$. 
	By construction, we have
	$$(x_{s,t,\eps},x_{t,s,\eps})\in \Gamma_{s,\eps}\times\Gamma_{t,\eps},\ \forall (s,t)\in \Z\e\times\Z\e,$$
	the sets $\{\Gamma_{s,\eps}\times \Gamma_{s,\eps},\ (s,t)\in\Z\e\times\Z\e\}$ are mutually disjoint, and one has
	$$\bigcup\limits_{(s,t)\in \Z\e\times\Z\e}(\Gamma_{s,\eps}\times\Gamma_{t,\eps})\subset \Gamma\times\Gamma\text{\quad and\quad }
	\int_{(\Gamma\times\Gamma)\setminus\cup_{(s,t)\in \Z\e\times\Z\e}(\Gamma_{s,\eps}\times\Gamma_{t,\eps})}\d s_x \d s_y\to 0\text{ as }\eps\to 0.$$   From these facts we easily  conclude that the right-hand-side of {relation} \eqref{Ke:action} converges to  the integral over $\Gamma\times\Gamma$ of the function $K(x,y)v(x,y)$ that implies the fulfillment of 
	the last assumption \eqref{assump:main:2}.

	\section{Proof of the main results}\label{sec:4}

	In {what follows}, we assume that $\eps$ is sufficiently small {to ensure that}
	\begin{gather}\label{2drho}
		2d^\pm\ie\leq \rho\ie^\pm, 
	\end{gather}
	cf.~\eqref{drho}. Recall that $x\ie^\pm=(\x\ie^\pm,0)$, $e_n=(0,\dots,0,1)$, $\Gamma^\pm=\Gamma\pm e_n$. We denote $$\wt x\ie^\pm\ceq (\x\ie^\pm,\pm1)=x\ie^\pm\pm e_n\in\Gamma^\pm,$$ and introduce the sets
	\begin{align}
		\label{Yie}
		Y\ie^\pm&\ceq \B_{n}(\rho\ie^\pm,\wt x\ie^\pm)\cap 
		\{x \in \R^n:\ \pm x^n >1 \},\\\notag
		\wt Y\ie^\pm&\ceq \B_{n}(2 d\ie^\pm,\wt x\ie^\pm)\cap \{x \in \R^n:\ \pm x^n >1 \},
		\\\notag
		\Sigma\ie^\pm&\ceq \partial Y\ie^{\pm} \setminus \Gamma^{\pm},
		\\\notag
		\wt \Sigma\ie^\pm&\ceq \partial \wt Y\ie^{\pm} \setminus \Gamma^\pm,\\\notag
		B\ie^\pm&\ceq \partial Y\ie\cap \Gamma^\pm,
		\\\notag
		F\ie^\pm&\ceq \{x\in \R^n:\ (x^1,\dots,x^{n-1})\in \D\ie^\pm,\ \pm  x^n\in (1,1+d^\pm\ie)\}.
	\end{align}
	One has $F\ie^\pm\subset \wt Y\ie^\pm\subset Y\ie^\pm\subset M^\pm$, 
	{with  the first inclusion following from the definitions of the involved sets, the second inclusion being a consequence of \eqref{2drho}
		and the third one following from \eqref{assump:3}}. Also, we have $D\ie^\pm\subset B\ie^\pm$.

	In the following, by $\vol_d(D)$ we denote the volume of
	a domain $D\subset\R^d$. This notation will be used for $d=n$ and $d=n-1$, e.g., $\vol_{n}(Y\ie^\pm)$, $\vol_{n-1}(\D\ie^\pm)$.
	Also, if $S$ is a subset
	of an $(n-1)$-dimensional hypersurface in $\R^n$, we denote by
	$\area_{n-1}(S)$ its area, i.e. $\area_{n-1}(S)=\int_S \d s_x$, where
	$\d s_x$ is the density of the surface measure on $S$.
	These notations can be  demonstrated with the following example identities: $\area_{n-1}(D^\pm\ie)=\vol_{n-1}(\D^\pm\ie) $, 
	$\area_{n-1}(\Gamma)=\vol_{n-1}(\Ga) $.
	
	{By} $\la f\ra _{\Omega}$ we denote the mean value of a function $f$ in  an open bounded set $\Omega\subset\R^n$:
	\begin{equation*}
		\la f\ra_{\Omega}=(\vol_n(\Omega))^{-1}\int_{\Omega}f(x)\d x,
	\end{equation*}
	The same notation is  used for the mean value of a function $f$ on a
	subset $S$ of a $(n-1)$-dimensional hypersurface in $\R^{n-1}$, i.e.
	\begin{equation*}
		\la f\ra_{S}=(\area_{n-1}(S))^{-1}\int_{S}f(x)\d s_x.
	\end{equation*}
	
	\subsection{Auxiliary estimates}
	
	In this section we collect several functional estimates. 
	The estimate in Lemma~\ref{lemma:main}, which will be established through a series of intermediate results in Lemmata~\ref{lemma:aux1}-\ref{lemma:aux5}, plays an important role in the proof of Theorem~\ref{th1}.
	Lemma~\ref{lemma:main2} will be employed in the proof of Theorem~\ref{th2}.
	\begin{lemma}\label{lemma:aux1}
		{There is a $C>0$ such that}
		\begin{equation}\label{lemma:aux1:est}	
			\forall\, f\in H^1(F\ie^\pm):\quad
			\big|\la  f \ra _{D\ie^\pm} - \la  f\ra _{F\ie^\pm}\big|^2 \le
			C (d\ie^\pm)^{2-n} \|\nabla f\|^2_{L^2(F\ie^\pm)}.
		\end{equation}	
	\end{lemma}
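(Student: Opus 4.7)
The plan is to reduce the estimate to a uniform inequality on a rescaled domain of unit size, exploiting the shape assumptions \eqref{assump:shape1} and \eqref{assump:shape2}. Let me focus on the $+$ case (the $-$ case being identical). Set $d \ceq d\ie^+$ and consider the diffeomorphism
\begin{equation*}
\hat{x} = d^{-1}(x - \wt{x}\ie^+ + e_n),\qquad \hat f(\hat x) \ceq f(d\hat x + \wt{x}\ie^+ - e_n).
\end{equation*}
Under this rescaling, the cylinder $F\ie^+ = \D\ie^+\times(1, 1+d)$ is mapped to $\wh F \ceq \wh\D\ie^+\times(0,1)$, and its base $D\ie^+$ is mapped to $\wh D \ceq \wh\D\ie^+\times\{0\}$. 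By construction $\wh\D\ie^+$ has outer radius $1$, and the mean values are preserved: $\la\hat f\ra_{\wh D}=\la f\ra_{D\ie^+}$ and $\la\hat f\ra_{\wh F}=\la f\ra_{F\ie^+}$.

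On the unit-size domain, the target inequality becomes
\begin{equation*}
\big|\la\hat f\ra_{\wh D}-\la\hat f\ra_{\wh F}\big|^2 \le C\,\|\nabla\hat f\|_{L^2(\wh F)}^2,
\end{equation*}
with $C$ independent of $\eps$ and $i$. To prove it, set $g\ceq \hat f-\la \hat f\ra_{\wh F}$, so that $\la g\ra_{\wh F}=0$ and $\la\hat f\ra_{\wh D}-\la\hat f\ra_{\wh F}=\la g\ra_{\wh D}$. Cauchy--Schwarz gives
\begin{equation*}
|\la g\ra_{\wh D}|^2\le (\area_{n-1}(\wh D))^{-1}\,\|g\|_{L^2(\wh D)}^2,
\end{equation*}
and the trace inequality on $\wh F$ controls $\|g\|_{L^2(\wh D)}^2$ by $\|g\|_{H^1(\wh F)}^2$. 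Finally the Poincaré inequality applied to $g$ (which has zero mean on $\wh F$) gives $\|g\|_{L^2(\wh F)}^2\le \Lambda_{\rm N}(\wh F)^{-1}\|\nabla g\|_{L^2(\wh F)}^2$, so altogether $\|g\|_{H^1(\wh F)}^2\le C\|\nabla\hat f\|_{L^2(\wh F)}^2$. Uniformity of all these constants is the crucial point: $\area_{n-1}(\wh D)$ is bounded below by \eqref{assump:shape1} (the inball gives a definite area), the trace constant for $\wh F=\wh\D\ie^+\times(0,1)$ is uniform thanks to \eqref{assump:shape1} (again, an interior ball of fixed size in $\wh F$ allows a uniform extension-type trace bound), and $\Lambda_{\rm N}(\wh F)\ge \min\{\Lambda_{\rm N}(\wh\D\ie^+),\pi^2\}$ is bounded away from zero by \eqref{assump:shape2}.

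To finish, we scale back. The Jacobian of $x\mapsto\hat x$ is $d^{-n}$, and $\nabla_{\hat x}\hat f=d\,(\nabla_x f)(d\hat x+\wt{x}\ie^+-e_n)$, so
\begin{equation*}
\|\nabla \hat f\|_{L^2(\wh F)}^2 = d^{2-n}\,\|\nabla f\|_{L^2(F\ie^+)}^2,
\end{equation*}
which upon substitution into the scaled inequality yields \eqref{lemma:aux1:est}. The main conceptual point is guaranteeing uniformity of the Poincaré and trace constants on the rescaled cylinder $\wh F$, and this is precisely what the two shape assumptions are designed to provide.
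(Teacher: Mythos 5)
Your proof is correct and, after unwinding the rescaling, is essentially the same as the paper's: both apply Cauchy--Schwarz on the base, a trace inequality into the cylinder, and the Neumann Poincar\'e inequality, with the uniformity supplied by \eqref{assump:shape1} and \eqref{assump:shape2}; the paper simply works at the original scale $d\ie^\pm$ and tracks the powers of $d\ie^\pm$ explicitly, whereas you normalize first and cite uniform constants.

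One point of justification is slightly off, though the conclusion survives. You assert that the trace constant for $\wh F=\wh\D\ie^+\times(0,1)$ is uniform ``thanks to \eqref{assump:shape1} (an interior ball of fixed size in $\wh F$ allows a uniform extension-type trace bound).'' An interior ball by itself does not control a trace constant --- that would require uniform \emph{boundary} regularity of $\wh\D\ie^+$, which is not assumed. What actually saves the argument is that you only need the trace on the \emph{flat base} $\wh D$ of a unit-height cylinder, and for that the elementary one-dimensional averaging
\begin{equation*}
\|\hat u\|^2_{L^2(\wh D)}\le 2\|\hat u\|^2_{L^2(\wh F)}+2\|\partial_{x^n}\hat u\|^2_{L^2(\wh F)}
\end{equation*}
holds with universal constants, irrespective of the shape of the cross-section $\wh\D\ie^+$. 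This is exactly the inequality the paper derives directly at scale $d\ie^+$ in \eqref{trace:Fie+}--\eqref{trace:Fie}. If you replace your appeal to a generic trace theorem with this cylinder-specific bound, the uniformity claim is airtight and the rest of the argument (Cauchy--Schwarz with $\area_{n-1}(\wh D)\ge C$, Poincar\'e with $\Lambda_{\rm N}(\wh F)=\min\{\Lambda_{\rm N}(\wh\D\ie^+),\pi^2\}\ge C$, and the final scaling $\|\nabla\hat f\|^2_{L^2(\wh F)}=d^{2-n}\|\nabla f\|^2_{L^2(F\ie^+)}$) goes through as you wrote.
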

	\begin{proof}
		We will establish \eqref{lemma:aux1:est} for $F\ie^+$ {only;} for $F\ie^-$  the {the argument can be} repeated \emph{verbatim}. Let $u\in C^1(\overline{F\ie^\pm})$. For $x=(x^1,\dots,x^{n-1},1)$, $z=(x^1,\dots,x^{n-1},t) $ with $(x^1,\dots,x^{n-1})\in\D\ie^+ $ and $t\in (1,1+d\ie^+)$
		({thus} $x\in D\ie^+$ and $z\in F\ie^+$), we have
		$$
		u(x)=u(z)-\int_{1}^t {\partial u\over\partial x^n}(x^1,\dots,x^{n-1},\tau)\d \tau,
		$$
		whence
		\begin{align}\notag
			|u(x)|^2&\le 2|u(z)|^2+2(t-1)\int_{1}^t \left|{\partial u\over\partial x^n}(x^1,\dots,x^{n-1},\tau)\right|^2\d \tau\\
			&\le 2|u(z)|^2+2d\ie^+\int_{1}^{1+d\ie^+} \left|{\partial u\over\partial x^n}(x^1,\dots,x^{n-1},\tau)\right|^2\d \tau.\label{trace:Fie+}
		\end{align}
		Integrating {relation} \eqref{trace:Fie+} over {the cylindrical sets $F\ie^\pm$, i.e. over the variables} $t\in ({1},{1+d\ie^+})$ and $(x^1,\dots,x^{n-1})\in \D\ie^+$, and dividing {the result} by $d\ie^+$, we get
		\begin{gather}\label{trace:Fie}
			\|u\|^2_{L^2(D\ie^+)}
			\le 2(d\ie^+)^{-1}\|u\|^2_{L^2(F\ie^+)}+2d\ie^+\|\nabla u\|^2_{L^2(F\ie^+)},
		\end{gather}
		{and by} the density argument, the estimate \eqref{trace:Fie} holds for any $u\in H^1(F\ie^+)$.
		
		{Next} we apply \eqref{trace:Fie} {to} $u=f-\la f \ra_{F\ie}$. 
		Using the Cauchy-Schwarz inequality, we obtain:
		\begin{align}\notag
			\big|\la  f \ra _{D\ie^+} - \la  f\ra _{F\ie^+}\big|^2&=
			\big|\la  f   - \la  f\ra _{F\ie^+}\ra_{D\ie^+}\big|^2\leq
			(\area_{n-1}(D\ie^+))^{-1}\|f   - \la  f\ra _{F\ie^+}\|^2_{L^2(D\ie^+)}\\\notag
			&\leq
			2\,(\area_{n-1}(D\ie^+))^{-1}\left((d\ie^+)^{-1}\|f   - \la  f\ra _{F\ie^+}\|^2_{L^2(F\ie^+)}+
			d\ie^+\|\nabla f   \|^2_{L^2(F\ie^+)}\right)
			\\
			&\leq
			2\,(\area_{n-1}(D\ie^+))^{-1}\big((\Lambda_N(F\ie^+)d\ie^+)^{-1}+d\ie^+\big)\|\nabla f   \|^2_{L^2(F\ie^+)},\label{lm1:1}
		\end{align}
		where in the last step we have used the standard Poincar\'e inequality
		\begin{gather}\label{Poincare}
			\|u-\la u\ra_\Omega\|^2_{L^2(\Omega)} \leq (\Lambda_N(\Omega))^{-1}\|\nabla u\|^2_{L^2(\Omega)},
		\end{gather}
		which holds for an arbitrary bounded Lipschitz domain $\Omega\subset\R^d$; recall that the {symbol} $\Lambda_N(\Omega)$ stands for the first non-zero eigenvalue of the Neumann Laplacian on $\Omega$. Using \eqref{assump:shape1}, we deduce
		\begin{gather}\label{lm1:2}
			{  {\area_{n-1}(D\ie^+)}={\mathrm{vol}_{n-1}(\D\ie^+)}}
			=(d\ie^+)^{n-1}{\mathrm{vol}_{n-1}(\wh\D\ie^+)}
			\geq C(d\ie^+)^{n-1}.
		\end{gather}
		Furthermore, since $F\ie^+$ is a cylinder with the base $D\ie^+$ and the height $d\ie^+$, we have
		\begin{gather}\label{lm1:3}
			\Lambda_N(F\ie^+)=\min\left\{\Lambda_N(\D\ie^+),\, ( {\pi}/{d\ie^+} )^2\right\}.
		\end{gather}
		Using simple scaling arguments and \eqref{assump:shape2}, we obtain
		\begin{gather}\label{lm1:4}
			\Lambda_N(\D\ie^+)=(d\ie^+)^{-2}\Lambda_N(\wh\D\ie^+)\geq  C(d\ie^+)^{-2}.
		\end{gather}
		Combining {finally} \eqref{lm1:1}, \eqref{lm1:2}--\eqref{lm1:4}, we arrive at the {sought} estimate \eqref{lemma:aux1:est}.
	\end{proof}
	
	\begin{lemma}\label{lemma:aux2}
		{There is a $C>0$ such that}
		\begin{equation} 	\label{lemma:aux2:est}
			\forall\, f\in H^1(\wt Y\ie^\pm):\quad
			\big|\la  f \ra _{F\ie^\pm} - \la  f\ra _{\wt Y\ie^\pm}\big|^2 \le
			C (d\ie^\pm)^{2-n} \|\nabla f\|^2_{L^2(\wt Y\ie^\pm)}.
		\end{equation}
	\end{lemma}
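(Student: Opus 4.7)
The plan is to repeat the strategy of Lemma~\ref{lemma:aux1}, but now using $\wt Y_{i,\eps}^\pm$ as the ``large'' reference domain on which to apply a Poincar\'e inequality. I will only write out the argument for the ``$+$'' side, since the ``$-$'' side is completely symmetric. The starting observation is that, by construction, $F_{i,\eps}^+\subset \wt Y_{i,\eps}^+$ (explicitly noted in the paper just after the definition \eqref{Yie}), so we may replace the mean over $\wt Y_{i,\eps}^+$ by a shifted constant and control the $L^2(F_{i,\eps}^+)$-norm by the $L^2(\wt Y_{i,\eps}^+)$-norm.

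Concretely, first I would apply the Cauchy--Schwarz inequality exactly as in the beginning of the proof of Lemma~\ref{lemma:aux1}:
\begin{align*}
\bigl|\la f\ra_{F_{i,\eps}^+}-\la f\ra_{\wt Y_{i,\eps}^+}\bigr|^2
&=\bigl|\la f-\la f\ra_{\wt Y_{i,\eps}^+}\ra_{F_{i,\eps}^+}\bigr|^2\\
&\le (\vol_n(F_{i,\eps}^+))^{-1}\,\|f-\la f\ra_{\wt Y_{i,\eps}^+}\|^2_{L^2(F_{i,\eps}^+)}\\
&\le (\vol_n(F_{i,\eps}^+))^{-1}\,\|f-\la f\ra_{\wt Y_{i,\eps}^+}\|^2_{L^2(\wt Y_{i,\eps}^+)},
\end{align*}
where in the last step I used $F_{i,\eps}^+\subset \wt Y_{i,\eps}^+$. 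Then the standard Poincar\'e inequality \eqref{Poincare} on the Lipschitz domain $\wt Y_{i,\eps}^+$ yields
\begin{equation*}
\|f-\la f\ra_{\wt Y_{i,\eps}^+}\|^2_{L^2(\wt Y_{i,\eps}^+)}\le (\Lambda_N(\wt Y_{i,\eps}^+))^{-1}\|\nabla f\|^2_{L^2(\wt Y_{i,\eps}^+)}.
\end{equation*}

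The key point is that $\wt Y_{i,\eps}^+$ is, up to a translation, exactly the half-ball $\B_n(2d_{i,\eps}^+,0)\cap\{x^n>0\}$. Hence by a standard scaling argument
\begin{equation*}
\Lambda_N(\wt Y_{i,\eps}^+)=(2d_{i,\eps}^+)^{-2}\,\Lambda_N\bigl(\B_n(1,0)\cap\{x^n>0\}\bigr)\ge C (d_{i,\eps}^+)^{-2},
\end{equation*}
with an absolute constant depending only on $n$, since the shape of the rescaled domain is fixed. On the other hand, $F_{i,\eps}^+$ is a cylinder with base $\D_{i,\eps}^+$ and height $d_{i,\eps}^+$, so using assumption \eqref{assump:shape1} as in \eqref{lm1:2},
\begin{equation*}
\vol_n(F_{i,\eps}^+)=\vol_{n-1}(\D_{i,\eps}^+)\,d_{i,\eps}^+=(d_{i,\eps}^+)^{n-1}\vol_{n-1}(\wh\D_{i,\eps}^+)\,d_{i,\eps}^+\ge C(d_{i,\eps}^+)^n.
\end{equation*}
Combining these three estimates gives the factor $(d_{i,\eps}^+)^{-n}\cdot(d_{i,\eps}^+)^{2}=(d_{i,\eps}^+)^{2-n}$, which is precisely \eqref{lemma:aux2:est}.

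The only mildly delicate point is the scaling of the Poincar\'e constant: one must observe that all sets $\wt Y_{i,\eps}^\pm$ are mutually similar (half-balls), so the shape-dependent constant in \eqref{Poincare} is uniform in $i$ and $\eps$, and only the diameter $\sim d_{i,\eps}^\pm$ enters. This is where the present argument is actually cleaner than the one for Lemma~\ref{lemma:aux1}: we do not need the $\wh\D$-based assumption \eqref{assump:shape2}, only the trivial geometry of a half-ball.
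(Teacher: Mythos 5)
Your proposal matches the paper's proof essentially step for step: the same Cauchy--Schwarz reduction to $\|f-\langle f\rangle_{\wt Y_{i,\eps}^\pm}\|_{L^2(\wt Y_{i,\eps}^\pm)}$, the same Poincar\'e inequality on the half-ball $\wt Y_{i,\eps}^\pm$ with the scaling $\Lambda_N(\wt Y_{i,\eps}^\pm)=C(2d_{i,\eps}^\pm)^{-2}$, and the same lower bound $\vol_n(F_{i,\eps}^\pm)\ge C(d_{i,\eps}^\pm)^n$ from \eqref{assump:shape1}. Your closing remark that here only half-ball geometry (not assumption \eqref{assump:shape2}) is needed is correct and consistent with the paper.
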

	\begin{proof}
		Taking into account   $F\ie^\pm\subset \wt Y\ie^\pm$ and using the
		Cauchy-Schwarz inequality and the
		Poincar\'e inequality \eqref{Poincare}, we get
		\begin{align}\notag
			\big|\la  f \ra _{F\ie^\pm} - \la  f\ra _{\wt Y\ie^\pm}\big|^2&=
			\big|\la  f - \la  f\ra _{\wt Y\ie^\pm} \ra _{F\ie^\pm}\big|^2
			\leq
			(\vol_{n}(F\ie^\pm))^{-1}\| f - \la  f\ra _{\wt Y\ie^\pm}\|^2_{L^2(F\ie^\pm)}
			\\\notag
			&\leq
			(\vol_{n}(F\ie^\pm))^{-1}\| f - \la  f\ra _{\wt Y\ie^\pm}\|^2_{L^2(\wt Y\ie^\pm)}
			\\\label{lm2:1}
			&\leq
			(\vol_{n}(F\ie^\pm)\cdot \Lambda_N(\wt Y\ie^\pm))^{-1}\|\nabla f \|^2_{L^2(\wt Y\ie^\pm)}.
		\end{align}
		Evidently, one has
		\begin{gather}\label{lm2:2}
			\Lambda_N(\wt Y\ie^\pm)=C\,(2d\ie^+)^{-2},
		\end{gather}
		where the contant $C$ is the first non-zero eigenvalue of the Neumann Laplacian on a unit half-ball in $\R^n$. {Furthermore, in view of} \eqref{assump:shape1} we have
		\begin{gather}\label{lm2:3}
			\vol_{n}(F\ie^\pm)=d\ie^\pm{\mathrm{vol}_{n-1}(\D\ie^\pm)}=(d\ie^\pm)^{n }{\mathrm{vol}_{n-1}(\wh\D\ie^\pm)}\ge C (d\ie^\pm)^{n } ;
		\end{gather}
		{the} estimate \eqref{lemma:aux2:est} {then} follows from \eqref{lm2:1}--\eqref{lm2:3}.
	\end{proof}
	
	\begin{lemma}\label{lemma:aux3}
		{There is a $C>0$ such that}
		\begin{equation} 	\label{lemma:aux3:est}
			\forall\, f\in H^1(\wt Y\ie^\pm):\quad
			\big|\la  f \ra _{\wt Y\ie^\pm} - \la  f\ra _{\wt \Sigma\ie^\pm}\big|^2 \le
			C (d\ie^\pm)^{2-n} \|\nabla f\|^2_{L^2(\wt Y\ie^\pm)}.
		\end{equation}	
	\end{lemma}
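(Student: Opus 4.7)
The strategy mirrors the proofs of Lemmata~\ref{lemma:aux1} and~\ref{lemma:aux2}: start with Cauchy--Schwarz, then apply a Poincar\'e-type inequality, and finally cash in on the explicit scaling of the half-ball $\wt Y\ie^\pm$ of radius $2d\ie^\pm$. More precisely, as a first step I would rewrite
\begin{equation*}
\bigl|\la f\ra_{\wt Y\ie^\pm} - \la f\ra_{\wt\Sigma\ie^\pm}\bigr|^2
= \bigl|\la f - \la f\ra_{\wt Y\ie^\pm}\ra_{\wt\Sigma\ie^\pm}\bigr|^2
\le (\area_{n-1}(\wt\Sigma\ie^\pm))^{-1}\,\bigl\|f - \la f\ra_{\wt Y\ie^\pm}\bigr\|^2_{L^2(\wt\Sigma\ie^\pm)}.
\end{equation*}

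The key ingredient is then a trace-Poincar\'e estimate of the form
\begin{equation*}
\bigl\|g - \la g\ra_{\wt Y}\bigr\|^2_{L^2(\wt\Sigma)} \le C\,\|\nabla g\|^2_{L^2(\wt Y)},
\end{equation*}
valid on the reference unit half-ball $\wt Y$ with spherical boundary piece $\wt\Sigma$. This follows by combining the continuity of the trace operator $H^1(\wt Y)\to L^2(\wt\Sigma)$ with the standard Poincar\'e inequality \eqref{Poincare} applied to $g-\la g\ra_{\wt Y}$, using that the trace of a constant vanishes when subtracted, so only the gradient contributes on the right-hand side. Transplanting this to $\wt Y\ie^\pm$ via the affine rescaling $x\mapsto (x-\wt x\ie^\pm)/(2d\ie^\pm)$ and tracking the Jacobian factors yields
\begin{equation*}
\bigl\|f - \la f\ra_{\wt Y\ie^\pm}\bigr\|^2_{L^2(\wt\Sigma\ie^\pm)} \le C\,d\ie^\pm\,\|\nabla f\|^2_{L^2(\wt Y\ie^\pm)}.
\end{equation*}

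Finally, since $\wt\Sigma\ie^\pm$ is (a hemisphere of) a sphere of radius $2d\ie^\pm$, one has $\area_{n-1}(\wt\Sigma\ie^\pm) \ge C(d\ie^\pm)^{n-1}$. Multiplying the three estimates gives the announced bound $C(d\ie^\pm)^{2-n}\|\nabla f\|^2_{L^2(\wt Y\ie^\pm)}$. I do not expect a serious obstacle here: the only delicate point is justifying the trace-Poincar\'e inequality on the reference half-ball with the gradient alone on the right-hand side, but this is classical and follows immediately from the fact that constants are in the kernel of $g\mapsto g - \la g\ra_{\wt Y}$ and the trace operator is bounded on $H^1$. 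The density argument (passing from $C^1$ to $H^1$) used in Lemma~\ref{lemma:aux1} is not even needed in the present estimate since we rely directly on the trace theorem for $H^1$.
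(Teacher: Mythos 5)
Your proof is correct and follows essentially the same route as the paper: rescale the half-ball to unit size, apply Cauchy--Schwarz, the $H^1\to L^2(\wt\Sigma)$ trace estimate, and the Neumann Poincar\'e inequality on the reference domain, then track the scaling factors. The only cosmetic difference is that you apply Cauchy--Schwarz on the original domain before rescaling, whereas the paper rescales first and does everything on the unit half-ball; the computations and the resulting power $(d\ie^\pm)^{2-n}$ coincide.
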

	\begin{proof}
		We introduce the sets
		$$
		\wt Y^\pm\ceq (2d\ie^\pm)^{-1} (\wt Y\ie^\pm - \wt x\ie^\pm),\quad
		\wt \Sigma^\pm\ceq (2d\ie^\pm)^{-1} (\wt \Sigma\ie^\pm - \wt x\ie^\pm).
		$$
		Evidently, $\wt   Y^\pm$ are unit half-balls in $\R^n$, $\wt \Sigma^\pm\subset \partial \wt  Y^\pm$ are unit half-spheres, {both referring to a ball and sphere, respectively, centered at the origin. We employ} the trace estimate
		\begin{gather*}
			\forall u\in H^1(\Omega):\ \|u\|^2_{L^2(\Sigma)}\leq C  \|u\|^2_{H^1(\Omega)},
		\end{gather*}
		where $\Omega\subset\R^d$ is a Lipschitz domain, $\Sigma\subset\partial\Omega$ {and} the constant $C  $ depends on $\Omega$ and $\Sigma$ {only. Applying it to} $u=f-\la f \ra_{\wt\Sigma^\pm}$ and using the Cauchy-Schwarz inequality, we obtain
		\begin{align}\notag
			\big|\la  f \ra _{\wt\Sigma^\pm} - \la  f\ra _{\wt Y^\pm}\big|^2&=
			\big|\la  f  - \la  f\ra _{\wt Y^\pm}\ra _{\wt \Sigma^\pm}\big|^2\leq
			(\area_{n-1}(\wt\Sigma^\pm))^{-1}\|f-\la  f\ra _{\wt Y^\pm}\|^2_{L^2(\wt\Sigma^\pm)}\\
			\label{lm3:1}
			& \leq
			C\|f-\la  f\ra _{\wt Y^\pm}\|^2_{H^1(\wt Y^\pm)}\leq
			C_1\|\nabla f\|^2_{L^2(\wt Y^\pm)},
		\end{align}
		where {in} the last step we also use the Poincar\'e inequality \eqref{Poincare}. Using the {shift-and-scaling} coordinate transformation, $ x\mapsto y=2d\ie^\pm\cdot x+\wt x\ie^\pm$, {which maps} $\wt Y^\pm$ onto $\wt Y^\pm\ie$, we reduce the validity of the sought estimate \eqref{lemma:aux3:est} to that of \eqref{lm3:1}.
	\end{proof}
	
	\begin{lemma}\label{lemma:aux4}
		{There is a $C>0$ such that}
		\begin{equation} 	\label{lemma:aux4:est}
			\forall\, f\in H^1(R\ie^\pm):\quad
			\big|\la  f \ra _{\wt \Sigma\ie^\pm} - \la  f\ra _{  \Sigma\ie^\pm}\big|^2 \le
			C q\ie^\pm \|\nabla f\|^2_{L^2(Y\ie^\pm\setminus\wt Y\ie^\pm)}.
		\end{equation}	
	\end{lemma}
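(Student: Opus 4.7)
The geometry is the key: $\wt Y\ie^\pm$ and $Y\ie^\pm$ are half-balls of radii $2d\ie^\pm$ and $\rho\ie^\pm$, respectively, centered at the common point $\wt x\ie^\pm$ sitting on the flat face $\Gamma^\pm$. Hence $Y\ie^\pm\setminus\wt Y\ie^\pm$ is a spherical half-shell, $\wt\Sigma\ie^\pm$ is its inner half-sphere and $\Sigma\ie^\pm$ is its outer half-sphere. The scaling of the right-hand side, $q\ie^\pm=(d\ie^\pm)^{2-n}$ for $n\ge 3$ and $q\ie^\pm=-\ln d\ie^\pm$ for $n=2$, is exactly the logarithmic capacity / point-capacity weight associated to a spherical annulus, so I expect the estimate to come from a purely radial argument.

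The plan is to pass to spherical coordinates $(r,\omega)$ centered at $\wt x\ie^\pm$, with $\omega$ ranging over the upper unit half-sphere $S^{n-1}_+$ (of surface area $\omega_{n-1}$). For $f\in C^\infty(\overline{Y\ie^\pm\setminus\wt Y\ie^\pm})$ and each $\omega\in S^{n-1}_+$, write the fundamental theorem of calculus along the radial ray,
\begin{equation*}
f(\wt x\ie^\pm+\rho\ie^\pm\omega)-f(\wt x\ie^\pm+2d\ie^\pm\omega)=\int_{2d\ie^\pm}^{\rho\ie^\pm}\frac{\partial f}{\partial r}(\wt x\ie^\pm+r\omega)\,\d r.
\end{equation*}
Because the surface measure on each half-sphere factors as $r^{n-1}\d\sigma(\omega)$, integrating over $\omega$ yields
\begin{equation*}
\la f\ra_{\Sigma\ie^\pm}-\la f\ra_{\wt\Sigma\ie^\pm}=\frac{1}{\omega_{n-1}}\int_{S^{n-1}_+}\int_{2d\ie^\pm}^{\rho\ie^\pm}\frac{\partial f}{\partial r}(\wt x\ie^\pm+r\omega)\,\d r\,\d\sigma(\omega).
\end{equation*}

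Next I apply Cauchy--Schwarz twice. First, on the inner integral with weight $r^{(n-1)/2}$:
\begin{equation*}
\bigg|\int_{2d\ie^\pm}^{\rho\ie^\pm}\partial_r f\,\d r\bigg|^2\le\bigg(\int_{2d\ie^\pm}^{\rho\ie^\pm}r^{-(n-1)}\,\d r\bigg)\bigg(\int_{2d\ie^\pm}^{\rho\ie^\pm}|\partial_r f|^2 r^{n-1}\,\d r\bigg).
\end{equation*}
The first factor is bounded by $Cq\ie^\pm$: for $n\ge 3$ it equals $(n-2)^{-1}[(2d\ie^\pm)^{2-n}-(\rho\ie^\pm)^{2-n}]\le C(d\ie^\pm)^{2-n}$, and for $n=2$ it equals $\ln(\rho\ie^\pm/2d\ie^\pm)\le C(1-\ln d\ie^\pm)$; in both cases recall $d\ie^\pm\le\rho\ie^\pm$ and that $d\ie^\pm\to 0$. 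Then Cauchy--Schwarz on the $\omega$-integral and the bound $|\partial_r f|\le|\nabla f|$, together with the spherical volume element $\d x=r^{n-1}\d r\,\d\sigma(\omega)$, give
\begin{equation*}
\big|\la f\ra_{\Sigma\ie^\pm}-\la f\ra_{\wt\Sigma\ie^\pm}\big|^2\le\frac{Cq\ie^\pm}{\omega_{n-1}}\int_{Y\ie^\pm\setminus\wt Y\ie^\pm}|\nabla f|^2\,\d x,
\end{equation*}
which is the claim for smooth $f$, and extends to $H^1$ by density.

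I do not anticipate any serious obstacle: every step is standard once the capacity-type radial integral is identified. The only subtle points are (i) to make sure the $r^{(n-1)/2}$ weighting in Cauchy--Schwarz is set up so that the $r^{n-1}$ Jacobian reappears on the right (otherwise the exponent $2-n$ would not be recovered), and (ii) to handle $n=2$ and $n\ge 3$ simultaneously when estimating $\int r^{-(n-1)}\d r$ by $Cq\ie^\pm$. Note that the fact that we are working on half-balls rather than full balls plays no role beyond replacing $\omega_{n-1}$ by its half; it is crucial only that $\wt x\ie^\pm\in\Gamma^\pm$ so that the spherical coordinates cover $Y\ie^\pm\setminus\wt Y\ie^\pm$ without obstruction.
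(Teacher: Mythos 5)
Your proof is correct and follows essentially the same route as the paper's: pass to polar coordinates centered at $\wt x\ie^\pm$, write the difference of spherical means as a radial integral of $\partial_r f$ over the half-shell, apply Cauchy--Schwarz with the weight $r^{(n-1)/2}$ so that the Jacobian reappears, and recognize the remaining radial integral $\int_{2d\ie^\pm}^{\rho\ie^\pm}\tau^{1-n}\d\tau$ as being of size $O(q\ie^\pm)$. The only cosmetic differences are that you split the Cauchy--Schwarz into two successive applications (radial, then angular) whereas the paper does it in one step on the double integral, and that you are slightly more explicit about restricting to the upper half-sphere; neither affects the argument.
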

	\begin{proof}
		{Recall that $q\ie^\pm$ is defined by \eqref{qie}. It is evidently} enough to prove \eqref{lemma:aux4:est} for $f\in C^1(\overline{Y\ie^\pm\setminus\wt Y\ie^\pm})$. We introduce polar coordinate system $(r,\phi)$ with the pole at $\wt x\ie^\pm$; here $r>0$ stands for the distance to the pole and $\phi=(\phi_1,\dots,\phi_{n-1})$ are the angular coordinates. One has
		\begin{align}\notag
			{\la  f \ra _{\wt \Sigma\ie^\pm} - \la  f\ra _{  \Sigma\ie^\pm}}&=
			(\area_{n-1}(\partial \B_{n}(1,0)))^{-1}
			\left(\int_{\partial \B_{n}(1,0)}f({2d\ie^\pm},\phi)\d\phi-\int_{\partial \B_{n}(1,0)}f( \rho\ie^\pm,\phi)\d\phi\right)\\
			&=-(\area_{n-1}(\partial \B_{n}(1,0)))^{-1}
			\int_{\partial \B_{n}(1,0)}\int_{{2d\ie^\pm}}^{\rho\ie^\pm}\frac{\partial  {f}}{\partial r}(\tau,\phi)\d\tau\d\phi,\label{lm4:1}
		\end{align}
		where $\d\phi$ is a surface element on the unit sphere. Using the Cauchy-Schwarz inequality, we deduce from \eqref{lm4:1}:
		\begin{align*}
			&\big|{\la  f \ra _{\wt \Sigma\ie^\pm} - \la  f\ra _{  \Sigma\ie^\pm}}\big|^2\leq
			(\area_{n-1}(\partial \B_{n}(1,0)))^{-1}
			\int_{\partial \B_{n}(1,0)}\int_{{2d\ie^\pm}}^{\rho\ie^\pm}\left|\frac{\partial  {f}}{\partial r}(\tau,\phi)\right|^2\tau^{n-1}\d\tau\d\phi\cdot
			\int_{{2d\ie^\pm}}^{\rho\ie^\pm} \tau^{1-n}\d\tau\\
			&\leq(\area_{n-1}(\partial \B_{n}(1,0)))^{-1}\|\nabla f\|_{L^2(Y\ie^\pm\setminus\wt Y\ie^\pm)}^2\cdot
			\begin{cases}
				(n-2)^{-1}\big(({2d\ie^\pm})^{2-n}-(\rho\ie^\pm)^{2-n}\big) ,&n\ge 3\\
				\ln \rho\ie^\pm - \ln {2d\ie^\pm},&n=2.
			\end{cases}
		\end{align*}
		Obviously, the above estimate implies \eqref{lemma:aux4:est}.
	\end{proof}
	
	\begin{lemma}\label{lemma:aux5}
		{There is a $C>0$ such that}
		\begin{equation} 	\label{lemma:aux5:est}
			\forall\, f\in H^1(Y\ie^\pm):\quad
			\big|\la  f \ra _{\Sigma\ie^\pm} - \la  f\ra _{B\ie^\pm}\big|^2 \le
			C (\rho\ie^\pm)^{2-n}\|\nabla f\|^2_{L^2(Y\ie^\pm)}.
		\end{equation}	
	\end{lemma}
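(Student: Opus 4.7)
The proof should follow essentially the same pattern as that of Lemma~\ref{lemma:aux3}: prove the bound on a fixed unit half-ball by classical trace and Poincar\'e inequalities, then recover the $(\rho\ie^\pm)^{2-n}$ scaling factor by a change of variables. Concretely, I would introduce the rescaled sets
\begin{equation*}
Y^\pm\ceq (\rho\ie^\pm)^{-1}(Y\ie^\pm-\wt x\ie^\pm),\quad
\Sigma^\pm\ceq (\rho\ie^\pm)^{-1}(\Sigma\ie^\pm-\wt x\ie^\pm),\quad
B^\pm\ceq (\rho\ie^\pm)^{-1}(B\ie^\pm-\wt x\ie^\pm),
\end{equation*}
so that $Y^\pm$ is the standard unit half-ball in $\R^n$, $\Sigma^\pm$ is its hemispherical piece, and $B^\pm$ its flat equatorial disk. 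Note that $Y^\pm$, $\Sigma^\pm$, $B^\pm$ are \emph{fixed} sets independent of $\eps$ and $i$, which is crucial for obtaining a constant $C$ that does not depend on these parameters.

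For the function $\tilde f(y)\ceq f(\wt x\ie^\pm+\rho\ie^\pm y)$ on $Y^\pm$, I would then write
\begin{align*}
\big|\la \tilde f\ra_{\Sigma^\pm}-\la\tilde f\ra_{B^\pm}\big|^2
&\le 2\big|\la \tilde f-\la\tilde f\ra_{Y^\pm}\ra_{\Sigma^\pm}\big|^2+2\big|\la \tilde f-\la\tilde f\ra_{Y^\pm}\ra_{B^\pm}\big|^2\\
&\le C\bigl(\|\tilde f-\la\tilde f\ra_{Y^\pm}\|^2_{L^2(\Sigma^\pm)}+\|\tilde f-\la\tilde f\ra_{Y^\pm}\|^2_{L^2(B^\pm)}\bigr),
\end{align*}
and then bound each $L^2$-norm on the boundary of the fixed Lipschitz domain $Y^\pm$ by the $H^1(Y^\pm)$-norm via the standard trace inequality, followed by the Poincar\'e inequality \eqref{Poincare} to obtain
\begin{equation*}
\big|\la \tilde f\ra_{\Sigma^\pm}-\la\tilde f\ra_{B^\pm}\big|^2\le C\|\nabla \tilde f\|^2_{L^2(Y^\pm)}.
\end{equation*}

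Finally I would undo the rescaling. Means are invariant under the affine change of variables $x=\wt x\ie^\pm+\rho\ie^\pm y$, so $\la\tilde f\ra_{\Sigma^\pm}=\la f\ra_{\Sigma\ie^\pm}$ and $\la\tilde f\ra_{B^\pm}=\la f\ra_{B\ie^\pm}$; and a direct computation of the Jacobian gives the scaling relation
\begin{equation*}
\|\nabla\tilde f\|^2_{L^2(Y^\pm)}=(\rho\ie^\pm)^{2-n}\|\nabla f\|^2_{L^2(Y\ie^\pm)},
\end{equation*}
from which \eqref{lemma:aux5:est} follows.

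There is no real obstacle; the only point requiring mild care is that the half-ball $Y^\pm$ is a \emph{fixed} Lipschitz domain, independent of $\eps$ and $i$, so the trace and Poincar\'e constants are universal. This is in contrast to the previous lemmas, where the sets $F\ie^\pm$ and $\wt Y\ie^\pm$ had shape parameters that had to be controlled using assumptions \eqref{assump:shape1}--\eqref{assump:shape2}; here those assumptions play no role.
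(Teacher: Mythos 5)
Your proposal is correct and uses essentially the same argument as the paper: downscaling to a fixed unit half-ball and applying the trace estimate together with the Poincar\'e inequality \eqref{Poincare}. The paper organizes it slightly differently (it first inserts $\la f\ra_{Y\ie^\pm}$ and proves the two resulting estimates separately, each by downscaling, whereas you rescale once and then split), but the ideas are identical; you also correctly observe that the rescaled domain is fixed and independent of $\eps$ and $i$, which is exactly what makes the constants uniform here.
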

	\begin{proof}
		The {sought inequality \eqref{lemma:aux5:est}} follows from the estimates
		\begin{gather*}
			\forall\, f\in H^1(Y\ie^\pm):\quad \big|{\la  f\ra _{  \Sigma\ie^\pm}} - \la  f\ra _{Y\ie^\pm}\big|^2 \le
			C (\rho\ie^\pm)^{2-n}\|\nabla f\|^2_{L^2(Y\ie^\pm)},
			\\
			\forall\, f\in H^1(Y\ie^\pm):\quad \big|\la  f \ra _{ B\ie^\pm} - \la  f\ra _{Y\ie^\pm}\big|^2 \le
			C (\rho\ie^\pm)^{2-n}\|\nabla f\|^2_{L^2(Y\ie^\pm)},
		\end{gather*}
		which are proven similarly to   \eqref{lemma:aux3:est}, namely by downscaling the trace estimate.
	\end{proof}
	
	From Lemmata~\ref{lemma:aux1}--\ref{lemma:aux5} and the definition of the numbers $q\ie^\pm$ we immediately get the first main result of this subsection:
	
	\begin{lemma}\label{lemma:main}
		{There is a $C>0$ such that}
		\begin{equation*} 	
			\forall\, f\in H^1(Y\ie^\pm):\quad
			\big|\la  f \ra _{B\ie^\pm} - \la  f\ra _{D\ie^\pm}\big|^2 \le
			C q\ie^\pm \|\nabla f\|^2_{L^2(Y\ie^\pm)}.
		\end{equation*}	
	\end{lemma}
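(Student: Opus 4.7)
The plan is to chain the five estimates from Lemmata~\ref{lemma:aux1}--\ref{lemma:aux5} via the triangle inequality. Namely, I insert the intermediate mean values $\la f\ra_{\Sigma\ie^\pm}$, $\la f\ra_{\wt\Sigma\ie^\pm}$, $\la f\ra_{\wt Y\ie^\pm}$, $\la f\ra_{F\ie^\pm}$ so as to write
\begin{align*}
\la f\ra_{B\ie^\pm} - \la f\ra_{D\ie^\pm}
&= \bigl(\la f\ra_{B\ie^\pm} - \la f\ra_{\Sigma\ie^\pm}\bigr)
+ \bigl(\la f\ra_{\Sigma\ie^\pm} - \la f\ra_{\wt\Sigma\ie^\pm}\bigr)
+ \bigl(\la f\ra_{\wt\Sigma\ie^\pm} - \la f\ra_{\wt Y\ie^\pm}\bigr) \\
&\quad + \bigl(\la f\ra_{\wt Y\ie^\pm} - \la f\ra_{F\ie^\pm}\bigr)
+ \bigl(\la f\ra_{F\ie^\pm} - \la f\ra_{D\ie^\pm}\bigr).
\end{align*}
Squaring and using the elementary bound $(a_1+\dots+a_5)^2\le 5(a_1^2+\dots+a_5^2)$, the five summands are controlled respectively by Lemmata~\ref{lemma:aux5}, \ref{lemma:aux4}, \ref{lemma:aux3}, \ref{lemma:aux2}, \ref{lemma:aux1}, producing bounds with prefactors $(\rho\ie^\pm)^{2-n}$, $q\ie^\pm$, $(d\ie^\pm)^{2-n}$, $(d\ie^\pm)^{2-n}$, $(d\ie^\pm)^{2-n}$ respectively.

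The second ingredient is to absorb all these prefactors into a single constant multiple of $q\ie^\pm$. For $n\ge 3$, by definition $q\ie^\pm = (d\ie^\pm)^{2-n}$, and since $2-n<0$ together with $d\ie^\pm\le \rho\ie^\pm/2$ (see \eqref{2drho}), we have $(\rho\ie^\pm)^{2-n}\le (d\ie^\pm)^{2-n} = q\ie^\pm$. For $n=2$, both $(d\ie^\pm)^{2-n}$ and $(\rho\ie^\pm)^{2-n}$ equal $1$, while $q\ie^\pm=-\ln d\ie^\pm\to\infty$ as $\eps\to 0$ (by \eqref{drho} and \eqref{assump:1}), so $1\le C q\ie^\pm$ for $\eps$ small enough. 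In either case all three factors are $\le C q\ie^\pm$.

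Finally, the right-hand sides of the five auxiliary estimates involve $\|\nabla f\|^2_{L^2(\wt Y\ie^\pm)}$, $\|\nabla f\|^2_{L^2(Y\ie^\pm\setminus\wt Y\ie^\pm)}$, $\|\nabla f\|^2_{L^2(F\ie^\pm)}$, each of which is bounded above by $\|\nabla f\|^2_{L^2(Y\ie^\pm)}$ in view of the inclusions $F\ie^\pm\subset \wt Y\ie^\pm\subset Y\ie^\pm$. Assembling these observations yields the claimed inequality with a universal constant $C$.

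I do not expect any genuine obstacle: the argument is a direct triangle-inequality assembly. The only point that requires a moment of care is the dimension-dependent comparison of the three prefactors against $q\ie^\pm$, where the $n=2$ case must be handled separately and uses the smallness of $d\ie^\pm$ guaranteed by \eqref{drho}.
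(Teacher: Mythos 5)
Your proof is correct and matches the paper's intent exactly: the paper states that Lemma~\ref{lemma:main} follows "immediately" from Lemmata~\ref{lemma:aux1}--\ref{lemma:aux5} and the definition of $q\ie^\pm$, and you have supplied precisely the telescoping decomposition, the elementary quadratic inequality, and the dimension-split comparison of the prefactors against $q\ie^\pm$ (with the $n=2$ case correctly using that $d\ie^\pm\le\rho\ie^\pm/2\le 1/2$, hence $q\ie^\pm=-\ln d\ie^\pm\ge\ln 2$) that make that "immediately" rigorous.
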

	
	{Before coming to the proof of Theorem~\ref{th1}, we will derive one more auxiliary result which will be needed to prove Theorem~\ref{th2}.}
	\begin{lemma}\label{lemma:main2}
		{There is a $C>0$ such that}
		\begin{gather}\label{lemma:main2:est}
			\forall f\in H^1(M\e): \quad
			\sum_{(i,j)\in\L\e}\|f\|^2_{L^2(T\ije)}\leq
			Ch\e \|f\|^2_{H^1(M\e)}.
		\end{gather}
	\end{lemma}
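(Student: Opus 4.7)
The plan is to control $\|f\|^2_{L^2(T\ije)}$ by the trace of $f$ on the top face $S\ie^+\sim D\ie^+$ plus the normal derivative of $f$ on the passage, after which the claim follows by summation and the standard trace theorem on the fixed Lipschitz domain $M^+$ (whose trace constant is $\eps$-independent).

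First I would parametrize each passage as $T\ije\cong \D\ie\times(-h\ije/2,h\ije/2)$ and, for $f\in H^1(T\ije)$ (by density from $C^\infty$), write the fundamental-theorem-of-calculus identity
\begin{equation*}
f(\x,x^n)=f(\x,h\ije/2)-\int_{x^n}^{h\ije/2}\partial_n f(\x,\tau)\d\tau,\qquad \x\in\D\ie,\ x^n\in(-h\ije/2,h\ije/2).
\end{equation*}
A Cauchy--Schwarz estimate on the integral gives the pointwise bound $|f(\x,x^n)|^2\le 2|f(\x,h\ije/2)|^2+2h\ije\int_{-h\ije/2}^{h\ije/2}|\partial_n f(\x,\tau)|^2\d\tau$. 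Integrating over $T\ije$ and using the identification $S\ie^+\sim D\ie^+$ (so $f(\cdot,h\ije/2)$ is exactly the trace of $f\restr_{M^+}$ on $D\ie^+$) yields
\begin{equation*}
\|f\|^2_{L^2(T\ije)}\;\le\;2 h\ije\,\|f\|^2_{L^2(D\ie^+)}+2 h\ije^2\,\|\partial_n f\|^2_{L^2(T\ije)}.
\end{equation*}

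Next I would sum over $(i,j)\in\L\e$. Using $h\ije\le h\e$ and $h\ije^2\le h\e\cdot h\ije\le h\e$ (the latter since $h\ije<1$), the derivative contribution is bounded by
$2h\e\sum_{(i,j)\in\L\e}\|\partial_n f\|^2_{L^2(T\ije)}\le 2h\e\|\nabla f\|^2_{L^2(M\e)}$
thanks to the pairwise disjointness of the passages in $M\e$. For the trace contribution, observe that the sets $\{D\ie^+\}_{i\in\I\e}$ are pairwise disjoint subsets of $\Gamma^+\subset\partial M^+$: indeed $\D\ie^+\subset\B_{n-1}(d\ie^+,\x\ie^+)\subset\B_{n-1}(\rho\ie^+,\x\ie^+)$ by \eqref{2drho}, and the latter balls are disjoint by \eqref{assump:2}. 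Consequently, the standard trace theorem on the Lipschitz domain $M^+$ (with $\eps$-independent constant, since $M^+$ is fixed) gives
\begin{equation*}
\sum_{i\in\I\e}\|f\|^2_{L^2(D\ie^+)}\le\|f\|^2_{L^2(\Gamma^+)}\le C\,\|f\|^2_{H^1(M^+)}\le C\,\|f\|^2_{H^1(M\e)}.
\end{equation*}
Combining the two contributions yields \eqref{lemma:main2:est}.

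There is no substantial obstacle: the estimate is essentially a one-dimensional trace inequality on a short cylinder, and the length $h\ije$ appearing in front of the trace term is precisely what produces the factor $h\e$. The only point to verify carefully is that the trace constant for $M^+$ is $\eps$-independent, which is immediate because $M^+$ is a fixed Lipschitz domain independent of the sieve parameters. (An entirely analogous argument could be carried out using $D\je^-\subset\Gamma^-\subset\partial M^-$ instead of $D\ie^+$.)
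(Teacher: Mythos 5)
Your proof is correct and follows essentially the same route as the paper's: a one-dimensional fundamental-theorem-of-calculus estimate on each short cylinder $T\ije$ giving $\|f\|^2_{L^2(T\ije)}\le 2h\ije\|f\|^2_{L^2(S\ie^+)}+2h\ije^2\|\nabla f\|^2_{L^2(T\ije)}$, followed by summation, pairwise disjointness of $D\ie^+\subset\Gamma^+$, and the trace theorem on the fixed domain $M^+$. (The pairwise disjointness of the $D\ie^+$ is in fact assumed directly in the setup, so the detour through the balls $\B_{n-1}(\rho\ie^+,\x\ie^+)$ is not needed, but it is harmless.)
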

	\begin{remark}
		The  estimate \eqref{lemma:main2:est} is rough -- in its proof we will only {make} use of the fact that the passages $T\ije$ are short (cf.~\eqref{assump:5}), but we {are not going to} employ the fact that they are also very thin (cf.~\eqref{drho}).
	\end{remark}
	\begin{proof}[Proof of Lemma~\ref{lemma:main2}]
		Recall that $h\e$ is defined in \eqref{assump:5}.
		Let $f\in C^1(\overline{T\ije})$, $(i,j)\in T\ije$. For $x=(x^1,\dots,x^{n-1},t)$ and $z=(x^1,\dots,x^{n-1},h\ije/2) $ with $(x^1,\dots,x^{n-1})\in\D\ie^+ $ and $t\in (-h\ije/2,h\ije/2)$ (consequently, $x\in T\ije^+$ and $z\in S\ie^+$), one has
		$$
		f(x)=f(z)-\int_t^{h\ije/2} {\partial f\over\partial x^n}(x^1,\dots,x^{n-1},\tau)\d \tau,
		$$
		whence
		\begin{align}\notag
			|f(x)|^2&\le 2|f(z)|^2+2(h\ije/2-t)\int_t^{h\ije/2} \left|{\partial f\over\partial x^n}(x^1,\dots,x^{n-1},\tau)\right|^2\d \tau\\
			&\le 2|f(z)|^2+2h\ije\int_{-h\ije/2}^{h\ije/2} \left|{\partial f\over\partial x^n}(x^1,\dots,x^{n-1},\tau)\right|^2\d \tau.\label{lmmain2:1}
		\end{align}
		Integrating \eqref{lmmain2:1} over $t\in (-h\ije/2,h\ije/2)$ and $(x^1,\dots,x^{n-1})\in \D\ie^+$, {and estimating $\|{\partial f\over\partial x^n}\|^2$ by $\|\nabla f\|^2$}, we obtain
		\begin{gather}\label{lmmain2:2}
			\|f\|^2_{L^2(T\ije)}
			\le 2h\ije\|f\|^2_{L^2(S\ie^+)}+2(h\ije)^2\|\nabla f\|^2_{L^2(T\ije)}.
		\end{gather}
		By density argument, the estimate \eqref{lmmain2:2} holds for any $f\in H^1(T\ije)$. One has
		\begin{align}\notag
			\sum_{(i,j)\in\L\e}\|f\|^2_{L^2(T\ije)}&\leq
			Ch\e\bigg(\sum_{i\in\Z\e }\|f\|^2_{L^2(S\ie^+)} +
			\sum_{(i,j)\in\L\e }\|\nabla f\|^2_{L^2(T\ije)}\bigg)
			\\\notag
			&\leq
			Ch\e\bigg(\|f\|^2_{L^2(\Gamma^+)}+
			\sum_{(i,j)\in\L\e }\|\nabla f\|^2_{L^2(T\ije)}\bigg)\\&\leq
			C_1 h\e\bigg( \|f\|^2_{H^1(M^+)}+\sum_{(i,j)\in\L\e }\|\nabla f\|^2_{L^2(T\ije)}\bigg),\label{lmmain2:3}
		\end{align}
		where on the last step we use the continuity of the trace operators $H^1(M^\pm)\to L^2(\Gamma^\pm)$.
		Then the required inequality \eqref{lemma:main2:est} follows immediately from \eqref{lmmain2:3}.
	\end{proof}

	\subsection{Proof of Theorem~\ref{th1}}
	
	Let $f\e\in L^2(M\e)$ and   $u\e\in H^1(M\e)$ be the unique solution to the problem
	$
	\H\e u\e + u\e =f\e.
	$
	One has
	\begin{gather}\label{weak:e}
		\h\e[u\e,u\e]+\|u\e\|^2_{L^2(M\e)}= (f\e,u\e)_{L^2(M\e)}\leq \|f\e\|_{L^2(M\e)}\|u\e\|_{L^2(M\e)},
	\end{gather}
	whence we obtain the standard estimates
	\begin{gather}\label{aprioriestimate}
		\|u\e\|_{L^2(M\e)}\leq \|f\e\|_{L^2(M\e)},\quad
		\h\e[u\e,u\e]\leq \|f\e\|_{L^2(M\e)}^2.
	\end{gather}
	It follows from \eqref{assump:f1}--\eqref{assump:f2} that
	\begin{gather}\label{normsf}
		\begin{array}{r}
			\text{the norms $\|f\e\|_{L^2(M\e)} $ with $\eps\in (0,\eps_0]$ are uniformly bounded}\\[1mm]
			\text{provided that $\eps_0$ is sufficiently small;}
		\end{array}
	\end{gather}
	in the following we assume that $\eps\le \eps_0$.
	Hence \eqref{weak:e} implies the bound
	\begin{gather}\label{ue:bound}
		\h\e[u\e,u\e]+\|u\e\|^2_{L^2(M\e)}\leq C.
	\end{gather}
	{Next we recall} that the operator $\J\e$ is defined by \eqref{Je} {and} denote
	$$
	u\e^\pm\ceq (\J\e u\e)\restr_{\Omega^\pm}\in H^1(\Omega^\pm).
	$$
	It follows {then} from \eqref{ue:bound} that
	\begin{gather}\label{ue:bound+}
		\|u\e^\pm\|_{H^1(\Omega^\pm)}\leq C.
	\end{gather}
	Therefore, there exists a  sequence $\eps_k\searrow 0$ and $u^\pm\in H^1(\Omega^\pm)$ such that
	\begin{gather}\label{ue:weakconv}
		u\e^\pm \rightharpoonup u^\pm\text{ in }H^1(\Omega^\pm)\;\text{ as }\;\eps=\eps_k\to 0.
	\end{gather}
	Our goal is to show that the function $u\in H^1(\Omega\setminus\Gamma)$ given by $u\restr_{\Omega^\pm}=u^\pm$ satisfies
	\begin{gather}\label{limiting:eq}
		\H u + u =f.
	\end{gather}
	Since the equation \eqref{limiting:eq} has a unique solution, we conclude that \eqref{ue:weakconv} holds not only on the subsequence $\eps_k$, but for the whole family $\{u\e\}_{\eps}$.
	
	One  has the   identity
	\begin{gather}\label{integral:identity}
		\h\e[u\e,w\e]+(u\e,w\e)_{L^2(M\e)}=(f\e,w\e)_{L^2(M\e)},
	\end{gather}
	which holds for an arbitrary $w\e\in H^1(M\e)$.
	Our strategy is to insert a specially chosen test function $w\e$ into~\eqref{integral:identity} and to pass to the limit as $\varepsilon \to 0$ 
	with the aim to arrive on the integral identity \eqref{limiting:eq}.
	\smallskip
	
	We introduce the following subset $\mathscr{V}$ of $H^1(\Omega\setminus\Gamma)$:
	\begin{gather}
		\mathscr{V}=\big\{w\in H^1(\Omega\setminus\Gamma):\ w\restr_{\Omega^\pm}\in C^\infty(\overline{\Omega^\pm}) \big\}.\label{V}
	\end{gather}
	The set $\mathscr{V}$ is dense in $H^1(\Omega\setminus\Gamma)$ (with respect to $H^1$-norm).
	
	Let $w$ be an arbitrary real-valued function from $\mathscr{V}$. 
	The assumption that it is real-valued is made solely to simplicity the presentation (now, we do not have to deal with conjugation signs); for an arbitrary $w$ the final result (see the equality \eqref{final:eq}) will follow by linearity.	
	We set $w^\pm\coloneqq w\restr_{\Omega^\pm}$ {and note} that $w^\pm\in C^\infty(\overline{\Gamma})$. Let $\phi:\R\to\R$ be a smooth function satisfying
	\begin{gather*}
		\phi(t)=1\;\text{ for }\;t\le 1/2,
		\quad
		\phi(t)=0\;\text{ for }\;t\ge 1.
	\end{gather*}
	Then we define the function $w\e\in H^1(M\e)$ by
	\begin{gather}\label{we:test}
		w\e(x)=
		\begin{cases}
			\ds\wt w^\pm(x)+\sum_{i\in\I\e}(\wt w^\pm(\wt x\ie^\pm)-\wt w^\pm(x))\phi\ie^\pm(x)
			,&x\in M^\pm, \\
			\ds h\ije^{-1} (\wt w^+(\wt x\ie^+)-\wt w^-(\wt x\je^-) )\left(x^n+\frac{h\ije}2\right)+\wt w^-(\wt x\je^\pm),
			&x=(x^1,\dots,x^n)\in T\ije
		\end{cases}
	\end{gather}
	(recall that $x^n\in (- {h\ije}/2, {h\ije}/2)$ for $x\in T\ije$),
	where $\wt w^\pm\in C^\infty (\overline{M^\pm})$ are defined by
	\begin{gather}\label{wtw}
		\wt w^\pm(x^1,\dots,x^{n-1},x^n)=
		\begin{cases}
			w(x^1,\dots,x^{n-1},x^n-1),&  x=(x^1,\dots,x^n)\in M^+,\\
			w(x^1,\dots,x^{n-1},x^n+1),&  x=(x^1,\dots,x^n)\in M^-,
		\end{cases}
	\end{gather}
	and the cut-off functions $\phi\ie^\pm:\R^n\to\R$ are given by
	\begin{gather*}
		\phi\ie^\pm(x)\ceq \phi\bigg(\frac{|x-\wt x\ie^\pm|}{\rho\ie^\pm}\bigg).
	\end{gather*}
	Due to \eqref{2drho}, the cut-off function $\phi\ie^\pm$ is equal to $1$ on $D\ie^\pm$. Consequently, we get
	$$
	w\e\restr_{D\ie^+}=\wt w^+(\wt x\ie^+)=w\e\restr_{S\ie^+},\quad
	w\e\restr_{D\je^-}=\wt w^-(\wt x\je^-)=w\e\restr_{S\je^-},\quad (i,j)\in\L\e,
	$$
	{which} is why $w\e$ indeed belongs to $H^1(M\e)$.
	
	From the definition of the cut-off functions $\phi\ie^\pm$ we immediately conclude {that}
	\begin{gather*}
		\mathrm{supp}(\phi\ie^\pm)\cap \{x\in \R^n:\ \pm x^n>1\}\subset
		\overline{Y\ie^\pm},
	\end{gather*}
	where $Y\ie^\pm$ are given by \eqref{Yie}; {in view of} \eqref{assump:2}, the sets $Y\ie^\pm$ are pairwise disjoint. Taking these observations into account, we insert $w\e$ into the equation in \eqref{integral:identity} and obtain
	\begin{gather}\label{I:all}
		I_{1,\eps}^++I_{1,\eps}^-+
		I_{2,\eps}^++I_{2,\eps}^-+
		I_{3,\eps}+I_{4,\eps}=
		I_{5,\eps}^++I_{5,\eps}^-+
		I_{6,\eps}^++I_{6,\eps}^-+I_{7,\eps}.
	\end{gather}
	where
	\begin{align*}
		I_{1,\eps}^\pm&\ceq \int_{M^\pm}\left\{\nabla u\e(x)\cdot\nabla \wt w^\pm(x) + u\e(x) \wt w^\pm(x)\right\}\d x ,\\
		I_{2,\eps}^\pm&\ceq \sum_{i\in\I\e}\int_{Y\ie^\pm}\left\{\nabla u\e(x)\cdot\nabla \big( (\wt w^\pm(\wt x\ie^\pm)-\wt w^\pm(x))\phi\ie^\pm(x)\big) + u\e(x)   (\wt w^\pm(\wt x\ie^\pm)-\wt w^\pm(x) )\phi\ie^\pm(x)\right\}\d x ,\\
		I_{3,\eps}&\ceq \sum_{(i,j)\in\L\e}\int_{T\ije}\nabla u\e(x)\cdot \nabla w\e(x) \d x ,\\
		I_{4,\eps}&\ceq \sum_{(i,j)\in\L\e} \int_{T\ije} u\e(x) w\e(x)\d x ,\\
		I_{5,\eps}^\pm&\ceq \int_{M^\pm}f\e(x)  \wt w^\pm(x) \d x,\\
		I_{6,\eps}^\pm&\ceq \sum_{i\in\I\e}\int_{Y\ie^\pm}f\e(x)
		\big( \wt w^\pm(\wt x\ie^\pm)-\wt w^\pm(x)\big)\phi\ie^\pm(x)  \d x ,\\
		I_{7,\eps}&\ceq \sum_{(i,j)\in\L\e} \int_{T\ije} f\e(x) w\e(x)\d x.
	\end{align*}
	\smallskip
	
	{Next} we {will} investigate the above terms {one by one}. Using \eqref{ue:weakconv} and taking into account \eqref{Je} and \eqref{wtw}, we get
	\begin{gather}\label{I1:final}
		I_{1,\eps}^\pm=( u\e^+,w^+)_{H^1(\Omega^+)}+( u\e^-,w^-)_{H^1(\Omega^-)}\underset{\eps\to 0}\to
		(u,w^+)_{H^1(\Omega^+)}+(u,w^-)_{H^1(\Omega^-)} .
	\end{gather}
	Similarly, \eqref{assump:f2} {yields}
	\begin{gather}\label{I5:final}
		I_{5,\eps}^\pm=(\J\e f\e,w^+)_{L^2(\Omega^+)}+(\J\e f\e,w^-)_{L^2(\Omega^-)}\underset{\eps\to 0}\to
		(f,w^+)_{L^2(\Omega^+)}+(f,w^-)_{L^2(\Omega^-)} .
	\end{gather}
	{Moreover}, we observe that
	\begin{gather}\label{ww:est}
		\text{for }\;x\in Y\ie^\pm\;\text{ one has }\; |\wt w^\pm(\wt x\ie^\pm)-\wt w^\pm(x)|\leq C \rho\ie^\pm,
	\end{gather}
	where the constant $C$ depends on the functions $w^\pm$ {only}. Furthermore, we have
	\begin{gather}\label{phi:est}
		|\phi^\pm\ie(x)|\leq C_1,\quad |\nabla\phi\ie^\pm(x)|\leq C_2(\rho\ie^\pm)^{-1}.
	\end{gather}
	Combining \eqref{ww:est}--\eqref{phi:est}, we arrive at the estimates
	\begin{align}
		x\in Y\ie^\pm:\quad |\nabla \big( (\wt w^\pm(\wt x\ie^\pm)-\wt w^\pm(x))\phi\ie^\pm(x)\big)|\leq C,
		\
		| (\wt w^\pm(\wt x\ie^\pm)-\wt w^\pm(x))\phi\ie^\pm(x) |\leq C\rho\ie^\pm,\label{ww:est+}
	\end{align}
	where the constant $C$ depends on $w^\pm$ {only}. Using \eqref{ww:est+} and the Cauchy-Schwarz inequality, we obtain
	\begin{gather}\label{I2:final}
		|I_{2,\eps}^\pm|\leq C\|u\e\|_{H^1(\cup_{i\in\I\e} Y\ie^\pm)}\big(\vol_n(\cup_{i\in\I\e} Y\ie^\pm)\big)^{1/2}\leq
		C_1(\rho\e^\pm)^{1/2}
		\to 0\;\text{ as }\;\eps\to 0.
	\end{gather}
	where {in} the penultimate step {we used} \eqref{ue:bound+} and the fact that the sets  $Y\ie^\pm$ are pairwise disjoint and belong to the $\rho\e^\pm$-neighborhood of  $\Gamma^\pm$ (recall that $\rho\e^\pm$ are defined in \eqref{assump:1}). Similarly,
	using \eqref{normsf}, we have
	\begin{gather}\label{I6:final}
		|I_{6,\eps}^\pm|\leq C\|f\e\|_{L^2(\cup_{i\in\I\e} Y\ie^\pm)}\big(\vol_n(\cup_{i\in\I\e} Y\ie^\pm)\big)^{1/2}\leq
		C_1\big( \vol_n(\cup_{i\in\I\e} Y\ie^\pm)\big)^{1/2}\to 0\;\text{ as }\;\eps\to 0.
	\end{gather}
	It is easy to see that $|w\e(x)|\leq \max\left\{|w^+(x\ie)|,\,|w^-(x\je)|\right\}\leq C$ for $x\in T\ije$; here the constant $C$ depends on $w^\pm$ {only}. Hence, we get
	\begin{align}\notag
		|I_{4,\eps}|&\leq C\|u\e\|_{L^2(\cup_{(i,j)\in\L\e} T\ije)}  \big(\vol_n(\cup_{(i,j)\in\L\e} T\ije)\big)^{1/2}
		\\&
		\leq
		C_1\|u\e\|_{L^2(M\e)} \bigg( h\e\sum_{i\in\I\e} \vol_{n-1}(\D\ie^+)\bigg)^{1/2}\to 0\;\text{ as }\;\eps\to 0.\label{I4:final}
	\end{align}
	where the last {conclusion} follows   from  \eqref{ue:bound}, \eqref{assump:5} and the fact that the sets $\D\ie^+$ are pairwise disjoint and belong to $\Ga$ (whence, their total volume is bounded). Similarly, taking into account \eqref{normsf}, we get 
	\begin{gather}\label{I7:final}
		|I_{7,\eps}|\leq C\|f\e\|_{L^2(\cup_{(i,j)\in\L\e} T\ije)}  \big(\vol_n(\cup_{(i,j)\in\L\e} T\ije)\big)^{1/2} \to 0.
	\end{gather}
	
	Now, we turn our attention to the most intricate term, $I_{3,\eps}$. Taking into account that $\Delta w\e=0$ in $T\ije$, and that the normal derivative of $w\e$ on the lateral part of $\partial T\ije$ vanish, while on the top face (respectively, the bottom face) the derivative with respect to outward normal equals $h\ije^{-1} (\wt w^+(\wt x\ie^+)-\wt w^-(\wt x\je^-) )$ (respectively,  $-h\ije^{-1} (\wt w^+(\wt x\ie^+)-\wt w^-(\wt x\je^-) )$), we obtain via integration by parts
	\begin{align}\notag
		I_{3,\eps}&=\sum_{(i,j)\in\L\e}(\wt w^+(\wt x\ie^+)-\wt w^-(\wt x\je^-) )h\ije^{-1}
		\left(\int_{S\ie^+}u\e\d s_x - \int_{S\je^-}u\e\d s_x\right)
		\\\label{I3:1}
		&=\sum_{(i,j)\in\L\e}K\ije \big(w^+(x\ie^+)-w^-(x\je^-) \big)
		\big(\la u\e\ra_{D\ie^+}-\la u\e\ra_{D\je^-}\big),
	\end{align}
	where {in the second equality} we use \eqref{leps}, the identifications $D\ie^+\sim S\ie^+$, $D\je^-\sim S\je^-$ {with} $(i,j)\in\L\e$, the definitions of the numbers $K\ije$ and the functions $\wt w^\pm$.
	
	For any $\delta>0$, let $u_\delta$ be a function from $\mathscr{V}$ (see \eqref{V})  satisfying
	\begin{gather}\label{delta}
		\|u^\pm-u_\delta^\pm\|_{H^1(\Omega^\pm)} < \delta,\quad\text{where } u_\delta^\pm\ceq u_\delta\restr_{\Omega^\pm}.
	\end{gather}
	By the continuity of the trace operator $H^1(\Omega^\pm)\to L^2(\Gamma)$, we obtain from \eqref{delta}
	\begin{gather}\label{delta+}
		\|u^\pm-u_\delta^\pm\|_{L^2(\Gamma)} < C\delta
	\end{gather}
	{with} the constant $C$ {depending} on $\Omega^\pm$ and $\Gamma$ {only}.
	Also, we introduce the function $v_\delta(x,y)$ on $\Gamma\times\Gamma$  by
	$$
	v_\delta(x,y)=\big( w^+(x)- w^-(y) \big)
	\big( u^+_\delta(x)- u^-_\delta(y)\big),\quad (x,y)\in\Gamma\times\Gamma.
	$$
	Note that
	since $w,u_\delta\in\mathscr{V}$, we have $v_\delta\in C^\infty(\overline{\Gamma}\times\overline{\Gamma})$.
	Using $u_\delta$ and $v_\delta$, we rewrite \eqref{I3:1} as follows,
	\begin{align*}
		I_{3,\eps}&=I_{31,\eps}+I_{32,\eps}+I_{33,\eps}+I_{34,\eps}.
	\end{align*}
	Here
	\begin{align*}
		I_{31,\eps}&\ceq\sum_{(i,j)\in\L\e}K\ije \big( w^+(x\ie^+)-w^-(x\je^-) \big)
		\big(\wt u^+_\delta(\wt x\ie^+)-\wt u^-_\delta(\wt x\je^-) \big)=
		\sum_{(i,j)\in\L\e}K\ije v_\delta(x\ie^+,x\je^-),
		\\
		I_{32,\eps}&\ceq\sum_{(i,j)\in\L\e}K\ije \big( w^+( x\ie^+)-w^-(  x\je^-) \big)
		\big(\la \wt u^+_\delta\ra_{B\ie^+}-\wt u^+_\delta(\wt x\ie^+)-\la \wt u_\delta^-\ra_{B\je^-}+\wt u_\delta^-(\wt x\je^-)\big)
		\\
		I_{33,\eps}&\ceq\sum_{(i,j)\in\L\e}K\ije \big(w^+(x\ie^+)-w^-(x\je^-) \big)
		\big(\la u\e - \wt u_\delta^+\ra_{B\ie^+}-\la u\e\ - \wt u_\delta^-\ra_{B\je^-}\big),
		\\
		I_{34,\eps}&\ceq\sum_{(i,j)\in\L\e}K\ije \big(w^+(x\ie^+)-w^-(x\je^-) \big)
		\big(\la u\e \ra_{D\ie^+}-\la u\e \ra_{B\ie^+}-\la u\e\ra_{D\je^-}+
		\la u\e \ra_{B\je^-}\big),
	\end{align*}
	where $\wt u_\delta^\pm$ are defined by   \eqref{wtw} with $w^\pm$ being replaced by $ u_\delta^\pm$
	(recall  that $x\ie^\pm=(\x\ie,0)$, $\wt x\ie^\pm=(\x\ie,\pm 1)$). 
	By \eqref{assump:main:2}, we obtain
	\begin{gather}\label{I31:final}
		\lim_{\eps\to0}I_{31,\eps}=\int_{\Gamma}K(x,y)v_\delta(x,y)\d s_x \d s_y.
	\end{gather}
	Since $\wt u_\delta^\pm\in C^\infty(\overline{M^\pm})$, one has
	\begin{gather}\label{udelta:est}
		|\la \wt u_\delta\ra_{B\ie^\pm}-\wt u_\delta(\wt x\ie^\pm)|\leq C\rho\ie^\pm\leq C\rho\e^\pm,
	\end{gather}
	where the constant $C$ depends on $u_\delta$ {only}.
	From   \eqref{assump:main:1}  we infer
	\begin{align}\label{sumKij}
		\suml_{(i,j)\in\L\e}K\ije
		\leq C \sum_{i\in\I\e}\vol_{n-1}
		(\B_{n-1}(\rho\ie^+,\x\ie^+))\leq C_1,
	\end{align}
	where in the last step we use the fact that $\B_{n-1}(\rho\ie^+,\x\ie^+)$
	are pairwise disjoint and belong to $\Ga$ (this follows from  \eqref{assump:2}--\eqref{assump:3}).
	Then, using \eqref{udelta:est} and \eqref{sumKij}, we obtain
	\begin{align*}
		|I_{32,\eps}|\leq C \max\{\rho\e^+,\,\rho\e^-\},
	\end{align*}
	where the constant $C$ depends  on $w^\pm$ and $u^\pm_\delta$; applying \eqref{assump:1}, we arrive at
	\begin{align}\label{I32:final}
		\lim_{\eps\to0} I_{32,\eps}=0.
	\end{align}
	Using the Cauchy-Schwarz inequality, \eqref{delta+}, \eqref{sumKij},
	and the bound
	\begin{gather*}
		K\ije\leq C\min\{\area_{n-1}(B\ie^+),\,\area_{n-1}(B\je^-)\},\ (i,j)\in\L\e,
	\end{gather*}
	which follows from \eqref{assump:main:1},
	we get
	\begin{align*}
		|I_{33,\eps}|&\leq C\bigg[\sum_{(i,j)\in\L\e}K\ije\bigg]^{1/2}
		\bigg[\sum_{(i,j)\in\L\e}K\ije\left(|\la u\e - \wt u_\delta^+\ra_{B\ie^+}|^2+|\la u\e\ - \wt u_\delta^-\ra_{B\je^-}|^2\right)\bigg]^{1/2}\\
		&\le C_1
		\bigg[
		\sum_{i\in\I\e}\frac{K_{i,\ell\e(i),\eps}}{\area_{n-1}(B\ie^+)}\|u\e-\wt u^+_\delta\|^2_{L^2(B\ie^+)}+
		\sum_{j\in\I\e}\frac{K_{\ell\e^{-1}(j),j,\eps}}{\area_{n-1}(B\je^-)}\|u\e-\wt u^-_\delta\|^2_{L^2(B\je^-)}\bigg]^{1/2}
		\\
		&\le C_2
		\left[
		\|u\e-\wt u^+_\delta\|^2_{L^2(\Gamma^+)}+
		\|u\e-\wt u^-_\delta\|^2_{L^2(\Gamma^-)}\right]^{1/2}
		= C_2
		\left(\|u\e^+-u_\delta^+\|_{L^2(\Gamma)}^2+\|u\e^--u_\delta^-\|_{L^2(\Gamma)}^2\right)^{1/2}
		\\
		&\le C_2
		\left(\|u\e^+-u^+\|_{L^2(\Gamma)}^2+\|u\e^--u^-\|_{L^2(\Gamma)}^2\right)^{1/2}
		+
		C_2
		\left(\|u^+-u^+_\delta\|_{L^2(\Gamma)}^2+\|u^--u^-_\delta\|_{L^2(\Gamma)}^2\right)^{1/2}
		\\
		&\le C_2
		\left(\|u\e^+-u^+\|_{L^2(\Gamma)}^2+\|u\e^--u^-\|_{L^2(\Gamma)}^2\right)^{1/2}
		+
		C_3 \delta
	\end{align*}
	Since the trace operators $H^1(\Omega^\pm)\to L^2(\Gamma)$ are  compact,
	the weak convergence \eqref{ue:weakconv} implies the strong convergence
	$$
	\|u\e^\pm-u^\pm\|_{L^2(\Gamma)}\to0\text{ as }\eps\to 0,
	$$
	Thus, we arrive at
	\begin{gather}\label{I33:final}
		\overline{\lim_{\eps\to0}}\:|I_{33,\eps}|\leq C_2\delta.
	\end{gather}
	It is important  that the constant $C_2$ {here} is independent on $\delta$.
	
	Finally, we {shall} investigate the term $I_{34,\eps}$. Here Lemma~\ref{lemma:main} comes {into play}: using it, \eqref{assump:main:1} and \eqref{sumKij} we can estimate $I_{34,\eps}$  as follows,
	\begin{align*}
		|I_{34,\eps}|&\leq C\bigg[\sum_{(i,j)\in\L\e}K\ije\bigg]^{1/2}
		\bigg[\sum_{(i,j)\in\L\e}K\ije\left(
		\big
		|\la u\e \ra_{D\ie^+}-\la u\e \ra_{B\ie^+}|^2+
		|\la u\e \ra_{D\je^-}-\la u\e \ra_{B\je^-}|^2\right)\bigg]^{1/2}
		\\
		&\le
		C_1
		\left(
		\sum_{i\in\I\e}(\rho\ie^+)^{n-1} q\ie^+  \|\nabla u\e\|^2_{L^2(Y\ie^+)}+
		\sum_{j\in\I\e}(\rho\je^-)^{n-1} q\je^- \|\nabla u\e\|^2_{L^2(Y\je^-)}
		\right)^{1/2}\\
		&\leq C_2\bigg(\big(\sup_{i\in \I\e}(\rho\ie^+)^{n-1}q\ie^+ + \sup_{j\in \I\e}(\rho\je^-)^{n-1}q\je^-\big)\h\e[u\e,u\e]\bigg)^{1/2}.
	\end{align*}
	whence, due to  \eqref{assump:4} and \eqref{ue:bound}, we obtain
	\begin{gather}\label{I34:final}
		\lim_{\eps\to0}I_{34,\eps}=0
	\end{gather}
	We set $v (x,y)\ceq (w^+(x)-w^-(y))(u^+(x)-u^-(y))$.
	From \eqref{I31:final}, \eqref{I32:final}, \eqref{I33:final}, \eqref{I34:final}, we obtain the  inequality
	\begin{align}
		&\overline{\lim_{\eps\to 0}}\,\Big|I\e^3 - \int_{\Gamma}K(x,y)v(x,y)\d s_x \d s_y \Big|\leq
		{\lim_{\eps\to 0}}
		\left|I\e^{31}-\int_{\Gamma}K(x,y)v_\delta(x,y)\d s_x \d s_y\right|\notag
		\\
		&+
		{\lim_{\eps\to 0}}\,
		|I\e^{32}|+{\lim_{\eps\to 0}}|I\e^{34}|+
		\overline{\lim_{\eps\to 0}}\,
		|I\e^{33}|+\left|\int_\Gamma K(x,y)(v_\delta(x,y)-v (x,y))\d s_x \d s_y\right|\notag
		\\
		&\leq C\left(\delta +\|v_\delta-v\|_{L^2(\Gamma\times\Gamma)}\right),
		\label{est:with:delta}
	\end{align}
	where
	the constants $C $ is independent on $\delta$. It follows from \eqref{delta+} that
	$$
	v_\delta\to v\text{ in }L^2(\Gamma\times\Gamma)\text{ as }\delta\to 0.
	$$
	Hence, since \eqref{est:with:delta} holds for each $\delta>0$, we conclude
	\begin{gather}\label{I3:final}
		\lim_{\eps\to 0}\, I\e^3 =
		\int_{\Gamma}K(x,y)(w^+(x)-w^-(y))(u^+(x)-u^-(y))\d s_x \d s_y.
	\end{gather}
	Combining \eqref{I:all}, \eqref{I1:final}, \eqref{I5:final},   \eqref{I2:final}, \eqref{I6:final},    \eqref{I4:final},  \eqref{I7:final}, \eqref{I3:final},  we arrive at the equality
	\begin{gather}\label{final:eq}
		\h[u,w]+(u,w)_{L^2(\Omega)}=
		(f,w)_{L^2(\Omega)},
	\end{gather}
	which holds for an arbitrary real-valued function $w\in\mathscr{V}$; by linearity it holds  for any  $w\in\mathscr{V}$. Since $\mathscr{V}$ is dense in $H^1(\Omega\setminus\Gamma)$, the {relation \eqref{final:eq}} holds with any $w\in H^1(\Omega\setminus\Gamma)$, which  is equivalent to \eqref{limiting:eq}. {This concludes the proof of} Theorem~\ref{th1}.
	
	\subsection{Proof of Theorem~\ref{th2}}
	
	To {prove this claim} we utilize the abstract scheme from \cite{IOS89}. Let $\{ H \e\}_{\eps>0}$ be a family of (separable) Hilbert spaces, and $\{\mathscr{B}\e\}_{\eps>0}$ be a family of linear bounded operator acting in these spaces. Also, let $ H $ be another separable Hilbert space, and $\mathscr{B} $ be a linear bounded operator in $ H $. We assume that the operators $\mathscr{B}\e$ and $\mathscr{B}$ are positive, compact and self-adjoint. We denote by $\{\mu\ke\}_{k\in\N}$ (respectively, $\{\mu_{k}\}_{k\in\N}$) the sequence of the eigenvalues of the operator $\mathscr{B}\e$ (respectively, $\mathscr{B}$) {arranged} in {the} descending order and counted with multiplicities. Let $f\ke,\ k\in\N$ be the corresponding eigenvectors normalized by the condition $(f\ke,f_{\ell,\eps})_{H\e}=\delta_{k\ell}$.
	
	\begin{theorem}{\!\!\!\rm\cite{IOS89}}\; \label{th:IOS}
		Assume that the following conditions hold:
		\begin{itemize}
			\item[$\rm (C_1)$] There exist linear bounded operators $\mathscr{L}\e: H \to  H \e$ such that
			\begin{gather*}
				\forall f\in H:\quad
				\lim_{\eps\to 0}\|\mathscr{L}\e f\|_{ H \e}=  \|f\|_{ H }.
			\end{gather*}
			\item[$\rm (C_2)$]  The operator norms $\|\mathscr{B}\e\| $ are bounded uniformly in $\eps$.\smallskip
			\item[$\rm (C_3)$]
			$\forall f\in H:\quad \lim\limits_{\eps\to 0}\|\mathscr{B}\e \mathscr{L}\e f-\mathscr{L}\e\mathscr{B} f\|_{ H \e}=0$.
			\item[$\rm (C_4)$] For any family of functions $\{f\e\in H\e\}_{\eps}$ satisfying $\sup_{\eps} \|f\e\|_{ H \e}<\infty$ there exist a subsequence $\eps_k\searrow 0$ and $w\in H$ such that
			\begin{gather*}
				\lim_{\eps_k\to 0}\|\mathscr{B}_{\eps_k} f_{\eps_k}-\mathscr{L}_{\eps_k} w\|_{ H _{\eps_k}}=0.
			\end{gather*}
		\end{itemize}
		Then
		\begin{gather}\label{EV}
			\mu\ke\to \mu_k\text{ as } \eps\to 0,\quad \forall  k\in\N.
		\end{gather}
		Moreover, if
		\begin{gather}\label{muj}
			\mu_{j-1}>\mu_{j}=\mu_{j+1}=...=\mu_{j+m-1}>\mu_{j+m},
		\end{gather}
		then for any eigenvector $u$ corresponding to $\mu_j$ there exists $ u\e\in\mathrm{span}\{u_{k,\eps},\ k=j,\dots,j+m-1\}$ such that
		\begin{gather}\label{EF}
			\|  u\e - \mathscr{L}\e u\|_{ H \e}\to 0,\quad
			\eps\to 0.
		\end{gather}
	\end{theorem}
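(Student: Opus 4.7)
The proof follows the variational/compactness paradigm for eigenvalue problems involving operators acting on distinct Hilbert spaces. My plan is to combine the min-max principle for compact positive self-adjoint operators with the four abstract conditions in three stages: a preservation-of-inner-products lemma, matching lower and upper bounds for each $\mu\ke$, and a spectral-projection argument for the eigenvectors.

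As a preliminary step, I would observe that $(C_1)$, together with the polarization identity applied to the linear map $\mathscr{L}\e$, yields
\begin{equation*}
\lim_{\eps\to 0}(\mathscr{L}\e f, \mathscr{L}\e g)_{H\e} = (f,g)_H, \quad \forall f,g\in H,
\end{equation*}
and combined with $(C_3)$ also $(\mathscr{B}\e \mathscr{L}\e f, \mathscr{L}\e g)_{H\e}\to(\mathscr{B}f,g)_H$. For the lower bound $\liminf \mu\ke\geq\mu_k$, let $e_1,\dots,e_k$ be orthonormal eigenvectors of $\mathscr{B}$ for $\mu_1,\dots,\mu_k$ and $V=\mathrm{span}\{e_1,\dots,e_k\}$. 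Since the Gram matrix of $\{\mathscr{L}\e e_j\}$ tends to $I_k$, for small $\eps$ these vectors span a $k$-dimensional subspace $V\e\subset H\e$. For any $f=\sum c_j e_j$ with $\sum|c_j|^2=1$, the Rayleigh quotient $(\mathscr{B}\e\mathscr{L}\e f,\mathscr{L}\e f)_{H\e}/\|\mathscr{L}\e f\|_{H\e}^2$ tends to $(\mathscr{B}f,f)_H\geq \mu_k$ uniformly in $c$ on the compact unit sphere of $\C^k$; the max-min principle $\mu\ke = \max_{\dim W=k}\min_{g\in W, \|g\|=1}(\mathscr{B}\e g,g)_{H\e}$ applied to $W=V\e$ gives the bound.

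For the upper bound $\limsup\mu\ke\leq\mu_k$, extract a subsequence along which each $\mu_{j,\eps}$ with $j\leq k$ converges to some $c_j\in[0,\sup_\eps\|\mathscr{B}\e\|]$ (possible by $(C_2)$). If $c_j=0$ for some $j\leq k$, then $\limsup\mu\ke=c_k\leq c_j=0\leq \mu_k$ and we are done. Otherwise, using the normalized eigenvectors $u_{j,\eps}$ of $\mathscr{B}\e$ and condition $(C_4)$ combined with diagonal extraction, one obtains vectors $w_j\in H$ with $\|\mathscr{B}\e u_{j,\eps}-\mathscr{L}\e w_j\|_{H\e}\to 0$; since $\mathscr{B}\e u_{j,\eps}=\mu_{j,\eps}u_{j,\eps}$ and $c_j>0$, it follows that $\|u_{j,\eps}-c_j^{-1}\mathscr{L}\e w_j\|_{H\e}\to 0$. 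Applying $\mathscr{B}\e$ once more to both sides and invoking $(C_3)$ and $(C_1)$ forces $\mathscr{B}w_j=c_j w_j$. The preliminary step applied to $(u_{i,\eps},u_{j,\eps})_{H\e}=\delta_{ij}$ gives $(w_i,w_j)_H=c_i c_j\delta_{ij}$, so the vectors $c_j^{-1}w_j$ form an orthonormal family of $k$ eigenvectors of $\mathscr{B}$ with eigenvalues $c_1\geq\dots\geq c_k$. The max-min principle for $\mathscr{B}$ then gives $\mu_k\geq c_k=\limsup\mu\ke$.

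Eigenvector convergence \eqref{EF} is then a Riesz-projection argument. Under \eqref{muj}, the eigenvalue convergence \eqref{EV} already established ensures that for sufficiently small $\eps$ the cluster $\{\mu_{j,\eps},\dots,\mu_{j+m-1,\eps}\}$ is enclosed by a fixed contour $\Gamma\subset\C$ separating it from the rest of the spectra of $\mathscr{B}\e$ and $\mathscr{B}$. Let $P\e$ be the Riesz projection of $\mathscr{B}\e$ associated with $\Gamma$; by construction $\mathrm{Ran}(P\e)=\mathrm{span}\{u_{k,\eps}:k=j,\dots,j+m-1\}$. For an eigenvector $u$ of $\mathscr{B}$ at $\mu_j$, set $u\e:=P\e\mathscr{L}\e u$; the claim $\|\mathscr{L}\e u-u\e\|_{H\e}\to 0$ follows from the contour representation $P\e=\frac{1}{2\pi i}\oint_\Gamma(\zeta-\mathscr{B}\e)^{-1}\d\zeta$, approximating $(\zeta-\mathscr{B}\e)^{-1}\mathscr{L}\e$ by $\mathscr{L}\e(\zeta-\mathscr{B})^{-1}$ uniformly on $\Gamma$ via $(C_3)$ and a Neumann-series expansion, and using $Pu=u$. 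The main obstacle of the whole argument is the upper bound: one must inductively extract $k$ limits $w_j$ from bounded sequences $u_{j,\eps}$ that simultaneously satisfy the eigenvalue equation for $\mathscr{B}$ and are mutually orthogonal, without prior information on the limit eigenvalues. Orthogonality is cheap from the preliminary step, but the eigenvalue identity requires precisely the interplay of $(C_3)$ (to close the iteration $u_{j,\eps}\approx c_j^{-1}\mathscr{L}\e w_j\Rightarrow\mathscr{B}\e u_{j,\eps}\approx\mathscr{L}\e(c_j^{-1}\mathscr{B}w_j)$) with $(C_4)$ (to make the initial passage to the limit possible).
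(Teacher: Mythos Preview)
The paper does not prove this theorem: it is stated as a quotation from \cite{IOS89} and used as a black box in the proof of Theorem~\ref{th2}. There is therefore no ``paper's own proof'' to compare against.

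Your sketch is a correct and standard route to the result. A couple of minor remarks. In the upper-bound step you should make explicit that the subsequence is first chosen so that $\mu_{k,\eps}$ attains its $\limsup$, and only then refined (first by compactness to make all $\mu_{j,\eps}$, $j\le k$, converge, and then $k$ times via $(C_4)$ and a diagonal extraction); otherwise the conclusion $c_k=\limsup\mu\ke$ is not justified. In the eigenvector step, the phrase ``Neumann-series expansion'' is unnecessary: since $u$ is an eigenvector, $(\zeta-\mathscr{B})^{-1}u=(\zeta-\mu_j)^{-1}u$, and the resolvent identity
\[
(\zeta-\mathscr{B}\e)^{-1}\mathscr{L}\e u-\mathscr{L}\e(\zeta-\mathscr{B})^{-1}u
=(\zeta-\mu_j)^{-1}(\zeta-\mathscr{B}\e)^{-1}\bigl(\mathscr{B}\e\mathscr{L}\e u-\mathscr{L}\e\mathscr{B}u\bigr)
\]
together with the uniform bound $\sup_{\zeta\in\Gamma}\|(\zeta-\mathscr{B}\e)^{-1}\|<\infty$ (valid once \eqref{EV} is established, by self-adjointness) gives $\|P\e\mathscr{L}\e u-\mathscr{L}\e u\|_{H\e}\to0$ directly. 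With these clarifications the argument is complete.
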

	
	{With this preliminary, we} return to the proof of Theorem~\ref{th2}. Set
	$$
	H\e\ceq L^2(M\e),\quad H\ceq L^2(\Omega),\quad
	\mathscr{B}\e\ceq (\mathscr{H}\e+\Id)^{-1},\quad
	\mathscr{B}\ceq (\mathscr{H}+\Id)^{-1}.
	$$
	The operators $\mathscr{B}\e$ and $\mathscr{B}$ are {obviously} positive, compact and self-adjoint. We also define the operator $\mathscr{L}\e:H\to H\e$ {by the following prescription},
	\begin{gather*}
		(\mathscr{L}\e f)(x^1,\dots,x^{n-1},x^n)=
		\begin{cases}
			f(x^1,\dots,x^{n-1},x^n-1),&  x=(x^1,\dots,x^n)\in M^+ ,\\
			f(x^1,\dots,x^{n-1},x^n+1),&  x=(x^1,\dots,x^n)\in M^-,\\
			0,&x\in \cup_{(i,j)\in \L\e}T\ije.
		\end{cases}
	\end{gather*}
	Note that $\mathscr{L}\e$ is {the} right inverse to the operator $\J\e$ given by \eqref{Je}. Our aim is to show that the above defined {operators} $\mathscr{B}\e$, $\mathscr{B}$ and $\mathscr{L}\e$ satisfy the assumptions of Theorem~\ref{th:IOS}.
	
	One has $\|\mathscr{L}\e f\|_{H\e}=\|f\|_{H}$ and $\|\mathscr{B}\e\|=(\dist(-1,\sigma(\H\e))^{-1}=1$, whence $\rm (C_1)$ and $\rm (C_2)$ hold true. Next, {let us check} $\rm (C_3)$. For $f\in H$, we set $f\e\ceq \mathscr{L}\e f$. One has
	\begin{gather}\label{C3:1}
		\|\mathscr{B}\e \mathscr{L}\e f - \mathscr{L}\e\mathscr{B}f\|^2_{H\e}=
		\|\J\e\mathscr{B}\e f\e -  \mathscr{B}f\|^2_{H}+\sum_{(i,j)\in\L\e}\|\mathscr{B}\e f\e\|^2_{L^2(T\ije)}.
	\end{gather}
	Evidently, $f\e$ satisfies \eqref{assump:f1}--\eqref{assump:f2}. Then, by Theorem~\ref{th1}, we conclude {that}
	\begin{gather}\label{Be:weak}
		\J\e\mathscr{B}\e f\e \rightharpoonup \mathscr{B}f\text{ in }H^1(\Omega\setminus\Gamma)\text{ as }\eps\to0.
	\end{gather}
	By virtue of {Rellich-Kondrashov theorem, relation} \eqref{Be:weak} implies
	\begin{gather}\label{Be:strong}
		\lim_{\eps\to 0}\|\J\e\mathscr{B}\e f\e - \mathscr{B}f\|_{H}=0.
	\end{gather}
	{Furthermore}, using Lemma~\ref{lemma:main2}, \eqref{assump:5} and \eqref{aprioriestimate}, we obtain
	\begin{gather}\label{C3:2}
		\sum_{(i,j)\in\L\e}\|\mathscr{B}\e f\e\|^2_{L^2(T\ije)}\leq
		C h\e \|\mathscr{B}\e f\e\|^2_{H^1(M\e)}\leq
		2C h\e \| f\e\|^2_{H\e}=
		2C h\e \| f\|^2_{H}\to 0\text{ as }\eps\to 0.
	\end{gather}
	Combining \eqref{C3:1}, \eqref{Be:strong} and \eqref{C3:2}, we conclude that {condition} $\rm (C_3)$ is fulfilled.
	
	{It remains to check the validity of} $\rm (C_4)$. Let $\{f\e\in H\e\}_{\eps}$ be a family of functions satisfying $\sup_{\eps} \|f\e\|_{ H \e}<\infty$. We denote $u\e\ceq\mathscr{B}\e f\e$. For the functions $u\e$ the estimates  \eqref{aprioriestimate} hold, whence   $u\e^\pm\ceq (\J\e u\e)\restr_{\Omega^\pm}\in H^1(\Omega^\pm)$ satisfy \eqref{ue:bound+}. Consequently, there exist a sequence $\eps_k\searrow 0$ and $u^\pm\in H^1(\Omega^\pm)$ satisfying \eqref{ue:weakconv}. By {Rellich-Kondrashov theorem again}, \eqref{ue:weakconv} implies
	\begin{gather}\label{upm:conv1}
		\|u\e^\pm - u^\pm\|_{L^2(\Omega^\pm)}\to 0\;\text{ as }\;\eps=\eps_k\to0.
	\end{gather}
	We introduce $w\in H^1(\Omega\setminus\Gamma)$ by $w\restr_{\Omega^\pm}=u^\pm$.
	Then \eqref{upm:conv1} is equivalent to
	\begin{gather}\label{upm:conv2}
		\|\J\e\B\e f\e  - w\|_{H}\to 0\;\text{ as }\;\eps=\eps_k\to0.
	\end{gather}
	Finally, we get
	\begin{gather}\label{upm:conv3}
		\|\B\e f\e  - \mathscr{L}\e w\|_{H\e}^2= \|\J\e\B\e f\e  - w\|_{H}^2+
		\sum_{(i,j)\in\L\e}\|u\e\|^2_{L^2(T\ije)},
	\end{gather}
	{and} using Lemma~\ref{lemma:main2}, \eqref{assump:5} and \eqref{aprioriestimate}, we have (cf.~\eqref{C3:2})
	\begin{gather}\label{upm:conv4}
		\sum_{(i,j)\in\L\e}\|u\e\|^2_{L^2(T\ije)}\to 0\;\text{ as }\;\eps\to 0.
	\end{gather}
	{Combination of} \eqref{upm:conv2}--\eqref{upm:conv4} {justifies} the assumption $\rm (C_4) $.
	
	{Having} checked the fulfillment of {conditions} (C$_1$)-(C$_4$), {we infer from} Theorem~\ref{th:IOS} {that} the properties \eqref{EV} and {\eqref{EF}} hold true. Taking into account that the eigenvalues of the operators $\H\e$ and $\B\e$ (respectively, $\H$ and $\B$) are linked by
	$$
	\mu\ke=(\lambda\ke+1)^{-1} \quad
	\text{(respectively, }\mu_k=(\lambda_k+1)^{-1}\text{)},
	$$
	we conclude that \eqref{EV} implies \eqref{th2:conv1}. Furthermore, the eigenspaces of $\H\e$ and $\B\e$ corresponding to $\lambda\ke$ and $\mu\ke$ do coincide, {and the analogous} statement holds for the eigenspaces of $\H$ and $\B$. Hence, by \eqref{EF}, if the eigenvalue $\lambda_j$ satisfies
	\eqref{lambdaj} (that is, $\mu_j=(1+\lambda_j)^{-1}$ satisfies \eqref{muj}) and $u$ is the corresponding eigenfunction, then there exists
	$u\e\in\mathrm{span}\{u_{k,\eps},\ k=j,\dots,j+m-1\}$ such that
	\begin{gather} \label{IL1}
		\|u\e-\mathscr{L}\e u\|_{H\e}\to 0\text{ as }\eps\to 0.
	\end{gather}
	Also, we have
	\begin{gather} \label{IL2}
		\|\mathscr{J}\e u\e-u\|^2_{H}=
		\|u\e-\mathscr{L}\e u\|_{H\e}^2-\sum_{(i,j)\in\L\e}\|u\e\|^2_{L^2(T\ije)}
		\leq \|u\e-\mathscr{L}\e u\|_{H\e}^2.
	\end{gather}
	Combining \eqref{IL1} and \eqref{IL2} we arrive at \eqref{th2:conv2}, {by which the} theorem is proven.

	\subsection{Proof of Theorem~\ref{th3}}
	
	We {restrict ourselves to} a sketch of the {argument only}, as {its} ideas closely resemble those used in the proof of Theorem~\ref{th1}. We denote
	$$
	v\e(t)\ceq {\exp(-\H\e t)f\e} =\sum_{k=1}^\infty \exp(-\lambda\ke t)f\ke u\ke,\text{ where }f\ke\ceq (f\e,u\ke)_{L^2(M\e)}.
	$$
	(recall that $\{\lambda\ke,\ k \in \mathbb{N}\}$ is the sequence of eigenvalues of   $\mathscr{H}\e$, and $\{u\ke,\ k \in \mathbb{N}\}$ is the associated sequence of orthonormalized eigenfunctions).
	The assumption $f\e\in H^1(M\e)$ (that is, $\sum_{k=1}^\infty(1+\lambda\ke)|f\ke|^2<\infty$) implies
	$$
	\forall T>0:\quad v\e\in C([0,T];\,H^1(M\e)),\quad \partial_t u\e\in L^2([0,T];\,L^2(M\e)).
	$$
	Furthermore, the following  equalities hold (below $\nabla$ refers to the space derivatives):
	\begin{gather}
		\label{apri1}  \|v\e(t)\|^2_{{L^2(M\e)}}+2\int_0^t\|\nabla
		v\e(\tau)\|^2_{{L^2(M\e)}}\d\tau= \|f\e\|^2_{{L^2(M\e)}},\;\;
		\forall t>0.
		\\
		\label{apri2}  \|\nabla
		v\e(t)\|^2_{{L^2(M\e)}}+2\int_0^t\|\partial_t
		v\e(\tau)\|^2_{{L^2(M\e)}}\d\tau= \|\nabla
		f\e\|^2_{{L^2(M\e)}},\;\; \forall t>0.
	\end{gather}
	We denote $v\e^\pm(t)\ceq(\J\e v\e(t))\restr_{\Omega^\pm}$.
	It follows from \eqref{fe:bound}, \eqref{apri1}, \eqref{apri2} and the definition \eqref{Je} of the operator $\J\e$  that
	\begin{gather}\label{global:bound}
		\|v\e^\pm\|_{H^1(\Omega^\pm\times[0,T])}\le\|v\e\|_{H^1(M\e\times [0,T])}\leq C.
	\end{gather}
	Furthermore, we also have
	$
	|v\e(t)-v\e(s)|^2=\ds\left|\int_s^t \partial_t v\e(\tau)\d\tau\right|^2 \leq
	|t-s|\int_0^{\max\{s,t\}}|\partial_t v\e(\tau)|^2\d\tau,
	$
	whence {in view of} \eqref{fe:bound} and \eqref{apri2}, we {infer that}
	\begin{align*}
		\|v\e(t)-v\e(s)\|^2_{L^2(M\e)}\leq
		|t-s|\int_0^{\max\{s,t\}}\|\partial_t v\e(\tau)\|_{L^2(M\e)}^2\d\tau
		\leq
		{1\over2}|t-s|\|\nabla f\e\|^2_{L^2(M\e)}\leq C|t-s|.
	\end{align*}
	The above inequality together with \eqref{Je} implies that the set $\{v\e^\pm \in C([0,T];L^2(\Omega^\pm))\}_{\eps }$ is equicontinuous.
	{Moreover}, due to \eqref{fe:bound}, \eqref{apri1} {and} \eqref{apri2},  one has $\|v\e(t)\|_{H^1(M\e)}\leq C $, whence by the Rellich-Kondrashov theorem
	{the set $\{v\e^\pm(t)\}_{\eps }$ is relatively compact in $L^2(\Omega^\pm)$ for any $t$. In that case we can use} the {Arzel\`a}--Ascoli theorem (see \cite[Theorem~3.10]{HS91} for its version applicable to $C(E, X)$, where $E$ is a compact topological space and $X$ is a metric space) {to} conclude that
	\begin{gather}\label{Arzela}
		\text{the set of functions }\{v\e^\pm  \}_{\eps}\text{ is relatively compact in }C([0,T],L^2(\Omega^\pm)).
	\end{gather}
	It follows from \eqref{global:bound} and \eqref{Arzela} that there exists a sequence {tending to zero,} $\eps_k\searrow 0$, and $v^\pm\in H^1(\Omega^\pm\times[0,T])$ such that
	\begin{gather}\label{ve:weakconv}
		v\e^\pm \rightharpoonup v^\pm \;\text{ in }\; H^1(\Omega^\pm\times[0,T]),\\
		\label{ve:Cconv}
		\max_{t\in [0,T]}\|v\e^\pm-v^\pm\|_{L^2(\Omega^\pm)}\to 0\quad\text{as }\;
		\eps=\eps_k\to 0.
	\end{gather}
	Let $T>0$. It is easy to see that $v\e$ satisfies the integral identity
	\begin{gather}\label{variat}
		-\int_0^T(v\e(t), w\e)_{L^2(M\e)}\zeta'(t)\d t+ \int_0^T \h\e[v\e(t),w\e]\zeta(t)\d t=0
	\end{gather}
	for {any} $w\e\in H^1(M\e)$ and $\zeta\in C_0^\infty(0,T)$.
	We choose the test function $w\e$ {according to} \eqref{we:test} and pass to the limit in \eqref{variat} as $\eps=\eps_k\to 0$.
	Repeating almost verbatim the arguments we {have used} in the proof of Theorem~\ref{th1} (taking into account the convergence \eqref{ve:weakconv} and the bounds \eqref{apri1}--\eqref{apri2}) we conclude that the function $v\in H^1((\Omega\setminus\Gamma)\times[0,T])$
	given by $v\restr_{\Omega^\pm}=v^\pm$ satisfies
	\begin{gather}\label{variat:hom}
		-\int_0^T(v(t), w)_{L^2(\Omega)}\zeta'(t)\d t+ \int_0^T \h[v(t),w]\zeta(t)\d t=0
	\end{gather}
	for any $\zeta\in C_0^\infty(0,T)$ and $w\in\mathscr{V}$ (the underlying function on top of which $w\e$
	is constructed); recall that $\mathscr{V}$ is given by \eqref{V} and  $\overline{\mathscr{V}}^{H^1}=H^1(\Omega\setminus\Gamma)$. Furthermore, \eqref{assump:f2} and \eqref{ve:Cconv} imply
	\begin{gather}\label{initial:hom}
		v^\pm(0)=f\restr_{\Omega^\pm}.
	\end{gather}
	It is easy to show that the only function $v\in C([0,T];L^2(\Omega))\cap L^2([0,T],H^1(\Omega\setminus\Gamma))$ satisfying  \eqref{variat:hom}--\eqref{initial:hom} is given by $v(t)={\exp(-\H t)f}$, i.e., $v$ is a weak solution to the problem
	\begin{gather*}
		{\ds{\partial v\over \partial t}+\H v=0},\;\; t>0,\qquad
		v(0)=f.
	\end{gather*}
	Since the limiting function $v$  is independent on the subsequence $\eps_k$, we conclude that \eqref{ve:Cconv} holds for the whole family $\{u\e\}_{\eps}$. {By that, the} theorem is proven.

	\section{Non-local Robin boundary conditions}
	\label{sec:5}
	
	{Similarly as local contact interactions have a `one-sided' counterpart in Robin boundary conditions, let us examine in} this section Robin-type conditions {related to the} non-local interface {coupling introduced} in Section~\ref{sec:2}.
	
	Let $\Omega$ be a bounded Lipschitz domain whose boundary contains a relatively open subset $\Gamma$ of a hyperplane $\{x=(x^1,\dots,x^n)\in\R^n:\ x^n=0\}$. We also define the set $\Ga\subset\R^{n-1}$ by \eqref{Ga}. To be specific, we assume that $\Omega$ lies (locally) below $\Gamma$, i.e. there is a $d>0$ such that $\Ga\times (-d,0)\subset\Omega$, see {the left part of} Figure~\ref{fig3}.
	
	In $L^2(\Omega)$ we consider the sesquilinear form $\a$ given by
	\begin{align*}
		\a[u,v]&=\int_{\Omega}\nabla u\cdot\overline{\nabla v}\d x
		+
		\int_\Gamma\int_\Gamma K(x,y)(u (x)-u (y))\overline{(v (x)-v (y))}
		\d s_x \d s_y,\ \dom(\a)=H^1(\Omega ),
	\end{align*}
	where  $K:\overline{\Gamma}\times\overline{\Gamma}\to\R$ is a continuous function satisfying \eqref{K:prop}, $\d s_*$ is  the surface element on $\Gamma$ (the subscript indicates the {integration} variable); hereinafter we use the same notation for $u\in H^1(\Omega )$ and its trace on $\Gamma$. The form $\a$ is symmetric, densely defined, positive, and closed. We denote by $\A$ the self-adjoint and positive operator   associated with $\a$. The function $ u\ceq(\A+\Id)^{-1}f$ with $f\in L^2(\Omega)$ is the weak solution to the problem
	\begin{gather*}
		\begin{cases}
			-\Delta u  + u  = f &\text{in }\Omega ,\\
			\ds\frac{\partial u }{\partial x^n} =-\int_\Gamma K(x,y)(u (x)-u (y)){\d s_y}&\text{on }\Gamma,\\[2mm]
			\ds\frac{\partial u}{\partial \nu}=0&\text{on }\partial\Omega\setminus\Gamma.
		\end{cases}
	\end{gather*}
	Our goal is to approximate the resolvent $(\A+\Id)^{-1}$ of $\A$ by the resolvent  of the Neumann Laplacian $\A\e$ on an appropriately constructed   manifold.
	\begin{figure}[ht]
		\centering
		\raisebox{-0.5\height}{\includegraphics[width=0.4\textwidth]{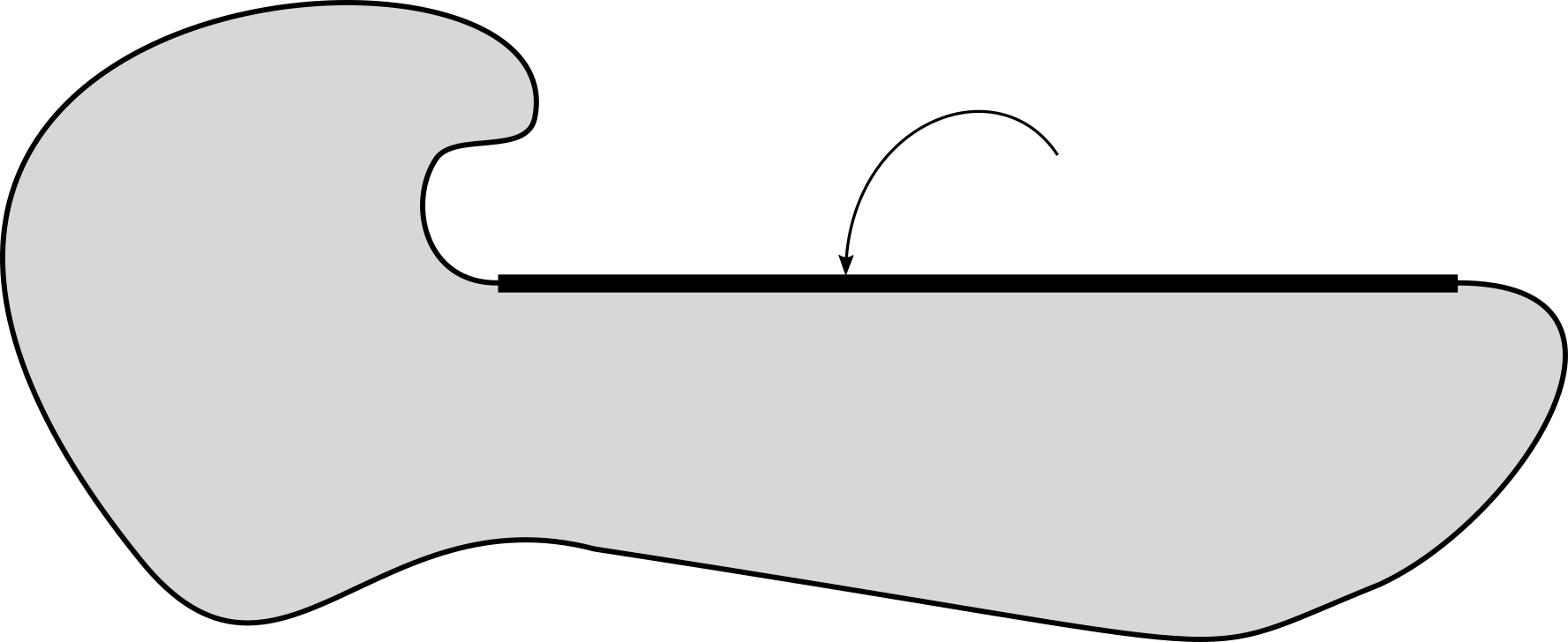}}\qquad\qquad\qquad
		\raisebox{-0.5\height}{\includegraphics[width=0.4\textwidth]{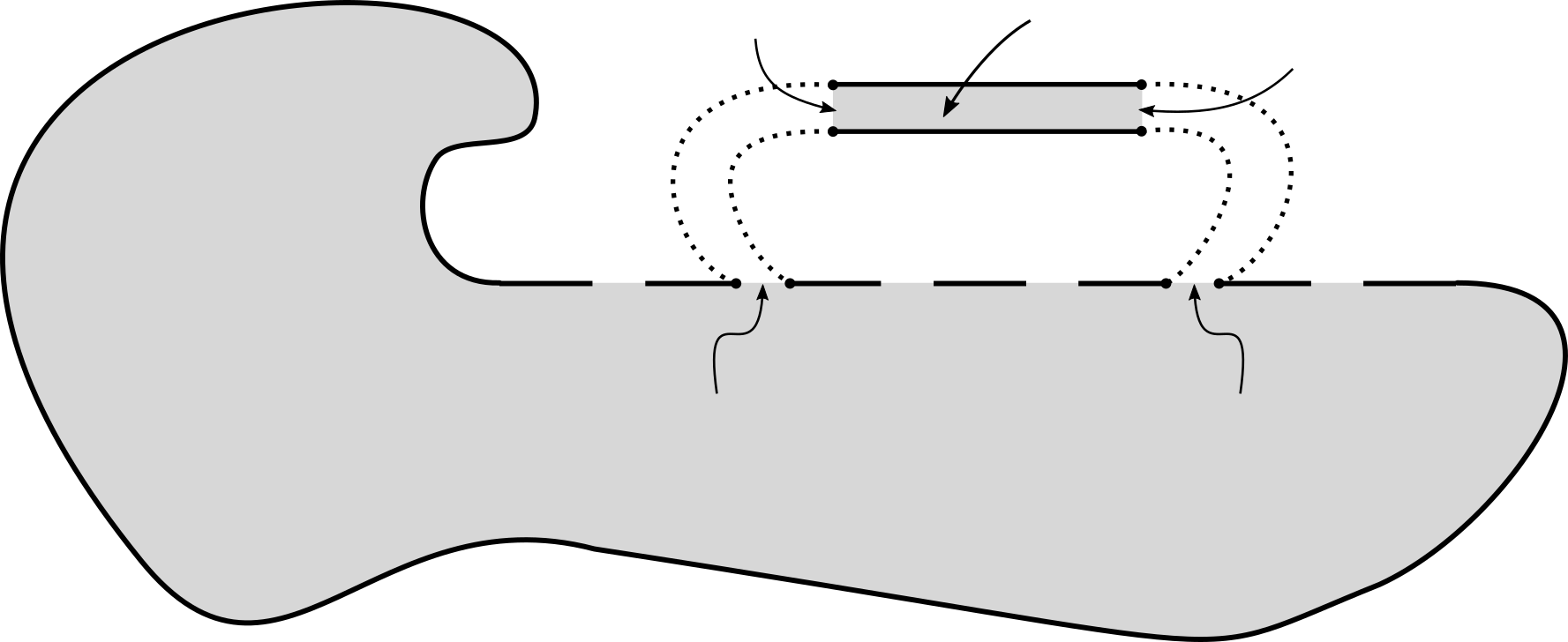}}
		\begin{picture}(0,0)
			
			\put(-303,15){$\Gamma$}
			\put(-65,35){$T\ije$}
			
			\put(-100,33){$D\ie$}
			\put(-38,30){$D\je$}
			\put(-102,-18){$S\ie$}
			\put(-45,-18){$S\je$}
			
			\put(-413,16){$\Omega$}
			\put(-170,16){$\Omega$}
			
		\end{picture}
		\caption{
			The domain $\Omega$ (left) with $\Gamma$  represented by a bold line,   and the manifold $M\e$ (right). The dotted lines demonstrate how the passages $T\ije$ are glued to $\Omega$}
		\label{fig3}
	\end{figure}
	To construct the {latter}, we first determine the subsets of $\Gamma$ to which we will glue the passages. Let $\eps>0$ be a small parameter, and
	$
	\{\mathbf{D}\ie \subset\Ga,\ i\in \I\e \}
	$
	be a family of pairwise disjoint Lipschitz domains in $\R^{n-1}$ with the set of indices $\I\e$ being finite and having \emph{even number} of elements. We assume that there is a bijective map $\ell\e:\I\e\to \I\e$ having no fixed points (i.e., if $\ell\e(i)=j$, then $i\not= j$), and such that  $\ell\e(i)=j$ iff $\ell\e(j)=i$. In other words, we partition the set $\I\e$ into mutually disjoint pairs, with the function $\ell\e$ defining this partitioning: $i$ and $j$ belong to one pair iff $\ell\e(i)=j$. We assume that the sets $\D\ie$ and $\D\je$ with $i$ and $j$ belonging to one pair are congruent. Finally, we denote $D\ie\ceq\{x\in\R^n:\ (x^1,\dots,x^{n-1})\in \D\ie,\ x^n=0\}\subset\Gamma$, and set $\L\e\ceq \{(i,j)\in\I\e\times\I\e:\  \ell\e(i)=j\}.$
	
	We introduce  the family of passages $\{T\ije\subset\R^n,\ (i,j)\in \L\e\}$ as follows:  we assume that $T\ije=T_{j,i,\eps}\cong\D\ie\times [0,h\ije]\cong\D\je\times [0,h\ije]$  with $0<h\ije=h_{j,i,\eps}<1$, the sets $T\ije$ are pairwise disjoint and have an empty intersection with $\overline\Omega$. We denote by $S\ie$ and $S\je$ the base faces of $T\ije$. Finally, for $(i,j)\in \L\e$, we glue the sets $T\ije$ to $\Omega$ by identifying $S\ie$ and $D\ie$ (respectively, $S\je$ and $D\je$). As a result, we obtain the (topological) manifold
	\begin{gather}\label{Me:Robin}
		M\e=\Omega\cup\Big(\cupl_{(i,j)\in \L\e}
		T\ije\Big)\Big/\sim,
	\end{gather}
	where $\sim$ denotes the above identification. The manifold $M\e$ is depicted {in the right part of} Figure~\ref{fig3}. Its boundary has the form
	$$
	\partial M\e=
	\Big[\partial\Omega \setminus \cupl_{i\in \I\e}D \ie\Big]\cup
	\Big[\cupl_{(i,j)\in \L\e}\left(\partial T\ije\setminus (S\ie \cup S\je ) \right)\Big].
	$$
	We equip it with a metric  assuming that is coincides with the Euclidean metric on each set $\Omega$ and $ { T}\ije$. We introduce the Hilbert spaces $L^2(M\e)$ and $H^1(M\e)$, where the latter consists of functions $u:M\e\to \C$ such that $u\restr_{\Omega}\,\in H^1(\Omega)$ and $u\restr_{T\ije}\,\in H^1(T\ije)$, and the traces from both sides of $S\ie\sim D\ie$ ($i\in\I\e$) do coincide.
	
	Let $d\ie$ and $\x\ie$ be the radius and the center of the smallest ball $\B_{n-1}({d\ie },\x\ie )$ containing  $\D\ie$, {respectively}. We assume that the upscaled sets $\wh\D\ie\ceq d\ie^{-1}D\ie$ satisfy \eqref{assump:shape1}--\eqref{assump:shape2} with the superscript $\pm$ being omitted. Furthermore, we assume that for each $\eps>0$ there exist numbers $ {\rho\ie>0 }$, $i\in \I\e$ satisfying the assumptions \eqref{assump:1}, \eqref{assump:2}, \eqref{assump:4} with the superscript $\pm$ being omitted, and  
	$
	\B_{n}({\rho\ie},x\ie)\cap\{x\in\R^n:\   x^n<0\}\subset\Omega,\; \forall i\in\I\e.
	$ {We also} assume that the condition \eqref{assump:5} holds true. Finally, we define the   quantities $$K\ije\ceq \vol_{n-1}(\D\ie)h\ije^{-1}=\vol_{n-1}(\D\je)h\ije^{-1},\quad (i,j)\in \L\e,$$ and the points $  x\ie\ceq(\x\ie,0)$.  We assume that
	\begin{gather*}
		\forall\eps>0\ \forall (i,j)\in  \L\e:\quad K\ije\leq C\min\big\{(\rho\ie )^{n-1},(\rho\je)^{n-1}\big\},\\\label{assump:main:2Robin}		\sum_{(i,j)\in \L\e} K\ije v( x\ie, x\je)\to\int_{\Gamma\times\Gamma}K(x,y)v(x,y)\d s_x \d s_y\,\text{ as }\,\eps\to0,\; \forall v\in C(\overline{\Gamma}\times\overline{\Gamma}),
	\end{gather*}
	where $K\in C(\overline{\Gamma}\times\overline{\Gamma})$ is the function standing in the definition of $\A$.\smallskip
	
	We are now in position to {state} the result of this section.
	\begin{theorem}\label{th:Robin}
		Let $\{f\e\in L^2(M\e)\}_{\eps}$ be a family of functions satisfying
		\begin{gather*}
			\lim_{\eps\to 0}\sum_{(i,j)\in \L\e}\|f\e\|^2_{L^2(T\ije)}=0,\qquad
			f\e\restr_\Omega\, \rightharpoonup  f\;\text{ in }\;L^2(\Omega)\;\text{ as }\;\eps\to 0,\;\text{ where }\; f\in L^2(\Omega).
		\end{gather*}
		Then, under the above formulated assumptions on the manifold $M\e$, the following holds:
		\begin{gather*}
			(\A\e +\Id )^{-1}f\e\restr_\Omega\, \rightharpoonup (\A+\Id )^{-1}f\;\text{ in }\;H^1(\Omega )\;\text{ as }\;\eps\to0.
		\end{gather*}
		If, in addition,  $ f\e\in H^1(M\e)$ and the bound \eqref{fe:bound} is {satisfied}, then
		\begin{gather}\label{diffusion}
			\forall T>0:\quad \lim_{\eps\to 0} \max_{t\in[0,T]}
			\big\|({\exp(-\A\e t)}f\e)\restr_{\Omega}
			-{\exp(-\A t)}f\big\|_{L_2(\Omega)}=0.
		\end{gather}
		{Finally}, one has
		\begin{gather*}
			\forall k\in\N:\ \lambda\ke\to\lambda_k\;\text{ as }\;\eps\to0,
		\end{gather*}
		where $\{\lambda\ke,\ k\in\N\}$ and $\{\lambda_k ,\ k\in\N\}$ are the sequences of eigenvalues of the operators $\A\e$ and $\A$, respectively, {arranged} in {the} ascending order  counted with multiplicities.
		If the eigenvalue $\lambda_j$ satisfies $\lambda_{j-1}<\lambda_j=\lambda_{j+1}=\dots=\lambda_{j+m-1}<\lambda_{j+m}$  and $u$ is an eigenfunction of $\A$ corresponding to $\lambda_j$, then there exists a sequence $u\e\in\mathrm{span}\{u_{k,\eps},\ k=j,\dots,j+m-1\}$ such that
		\begin{gather*}
			\| u\e\restr_\Omega-u\|_{L^2(\Omega)}\to 0,
		\end{gather*}
		where $\{u\ke,\ k\in\N\}$ is the associated  with $\{\lambda\ke,\ k\in\N\}$  orthonormal sequence of eigenfunctions.
	\end{theorem}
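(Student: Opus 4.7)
The plan is to follow the proofs of Theorems~\ref{th1}--\ref{th3} essentially verbatim, with only cosmetic modifications dictated by the single-domain geometry. First, I would rewrite the auxiliary estimates of Section~\ref{sec:4.1} in the new setting. The sets $Y\ie$, $\wt Y\ie$, $F\ie$, $\Sigma\ie$, $B\ie$, $\wt\Sigma\ie$ should be defined as portions of balls in $\Omega$ near the boundary point $x\ie\in\Gamma$, using the local flatness assumption $\Ga\times(-d,0)\subset\Omega$ to place these sets inside $\Omega$ (rather than in $M^\pm\pm e_n$ as before). Lemmata~\ref{lemma:aux1}--\ref{lemma:aux5} then transfer verbatim, yielding the single-index analogue of Lemma~\ref{lemma:main}:
\begin{equation*}
\big|\la f\ra_{B\ie}-\la f\ra_{D\ie}\big|^2\le C q\ie\,\|\nabla f\|^2_{L^2(Y\ie)},\qquad f\in H^1(Y\ie).
\end{equation*}
Lemma~\ref{lemma:main2} likewise carries over without change, since its proof only uses that each $T\ije$ is a thin short cylinder glued to the rest of the manifold along one of its bases (and here both bases are glued to $\Omega$, which only improves the inequality).

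Next, for the resolvent convergence I would set $u\e\ceq(\A\e+\Id)^{-1}f\e$ and derive the a~priori bound $\a\e[u\e,u\e]+\|u\e\|^2_{L^2(M\e)}\le C$ as in \eqref{aprioriestimate}; the assumption on $f\e$ then yields $\|u\e\restr_\Omega\|_{H^1(\Omega)}\le C$ and hence, along a subsequence, $u\e\restr_\Omega\rightharpoonup u$ in $H^1(\Omega)$ for some $u\in H^1(\Omega)$. The goal is to verify $\A u+u=f$ weakly, which by uniqueness promotes the subsequential convergence to the full family. Pick $w\in\mathscr{V}\ceq\{w\in H^1(\Omega):w\in C^\infty(\overline\Omega)\}$ and construct
\begin{equation*}
w\e(x)=\begin{cases}
w(x)+\sum_{i\in\I\e}(w(x\ie)-w(x))\phi\ie(x),&x\in\Omega,\\
h\ije^{-1}\bigl(w(x\ie)-w(x\je)\bigr)\,s+w(x\je),&x\in T\ije,
\end{cases}
\end{equation*}
with $\phi\ie$ the obvious cutoff localized in $Y\ie$ and $s\in[0,h\ije]$ the coordinate along the passage from the $S\je$-face to the $S\ie$-face. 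Plugging $w\e$ into $\a\e[u\e,w\e]+(u\e,w\e)_{L^2(M\e)}=(f\e,w\e)_{L^2(M\e)}$ and splitting as in \eqref{I:all}, all terms except the one coming from integration over the passages are handled by exactly the arguments used for $I_{1,\eps}^\pm$--$I_{7,\eps}$ in the proof of Theorem~\ref{th1}. For the passage term, integration by parts using $\Delta w\e=0$ in each $T\ije$ gives
\begin{equation*}
I_{3,\eps}=\sum_{(i,j)\in\L\e}K\ije\bigl(w(x\ie)-w(x\je)\bigr)\bigl(\la u\e\ra_{D\ie}-\la u\e\ra_{D\je}\bigr),
\end{equation*}
which is decomposed into the four pieces $I_{31,\eps},\dots,I_{34,\eps}$ exactly as before; the single-index Lemma~\ref{lemma:main} handles $I_{34,\eps}$, the compactness of the trace $H^1(\Omega)\to L^2(\Gamma)$ handles $I_{33,\eps}$, and the kernel-convergence assumption handles $I_{31,\eps}$. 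In the limit the form $\a[u,w]+(u,w)_{L^2(\Omega)}=(f,w)_{L^2(\Omega)}$ is recovered on a dense subset of $H^1(\Omega)$, completing the resolvent part.

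For the spectral convergence, I would apply Theorem~\ref{th:IOS} with $H\e\ceq L^2(M\e)$, $H\ceq L^2(\Omega)$, $\mathscr{B}\e\ceq(\A\e+\Id)^{-1}$, $\mathscr{B}\ceq(\A+\Id)^{-1}$, and $\mathscr{L}\e\colon H\to H\e$ the extension-by-zero operator, $\mathscr{L}\e f\restr_\Omega=f$, $\mathscr{L}\e f\restr_{T\ije}=0$. Conditions $\rm (C_1)$ and $\rm (C_2)$ are immediate; $\rm (C_3)$ and $\rm (C_4)$ follow from the resolvent convergence just proved combined with the single-index Lemma~\ref{lemma:main2}, which controls $\sum_{(i,j)\in\L\e}\|\mathscr{B}\e f\e\|^2_{L^2(T\ije)}$ by $Ch\e\|\mathscr{B}\e f\e\|^2_{H^1(M\e)}\to 0$. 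For the semigroup statement~\eqref{diffusion}, the argument of Theorem~\ref{th3} applies with the same test function: one derives the standard energy identities \eqref{apri1}--\eqref{apri2}, extracts a compactness/Arzel\`a--Ascoli limit in $C([0,T];L^2(\Omega))$, passes to the limit in the variational formulation, and invokes uniqueness of the limiting Cauchy problem.

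I do not foresee a genuinely hard step; the main subtlety is ensuring the auxiliary sets $Y\ie$ that host the cutoffs $\phi\ie$ really sit inside $\Omega$. This is where the assumption $\B_n(\rho\ie,x\ie)\cap\{x^n<0\}\subset\Omega$ plays the role previously played by \eqref{assump:3}, and it is the only place in the single-domain adaptation where the local geometry of $\Omega$ near $\Gamma$ enters essentially. Everything else is a matter of erasing the $\pm$ decorations from the earlier proof.
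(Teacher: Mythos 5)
Your proposal matches the paper's own approach exactly: the paper simply notes that the argument follows the proofs of Theorems~\ref{th1}--\ref{th3} almost verbatim, with minor adaptations to the test function, and leaves the details to the reader. Your adaptation of the auxiliary sets and Lemma~\ref{lemma:main} to the one-sided geometry, the modified test function on $\Omega$ and the passages, the choice $\mathscr{L}\e$ as extension by zero, and the use of the single-index Lemma~\ref{lemma:main2} for conditions $\rm (C_3)$--$\rm (C_4)$ are precisely the intended modifications, so there is nothing to add.
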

	
	We left the proof {of this claim} to the reader, since it follows the proofs of Theorems~\ref{th1}, \ref{th2}, \ref{th3} almost verbatim, with minor modifications {only}, such as adjustments in the construction of the test function $w\e(x)$.
	
	\smallskip
	
	At the end of this section, we outline an example of a manifold that satisfies the above assumptions. Essentially, this example is as a counterpart to the one constructed in Section~\ref{sec:3}.
	\begin{example}
		We define the sets $\Ga_{s,\eps}$, $\Ga_{s,t,\eps}$, $\D_{s,t,\eps}$ with $s,t\in\Z^{n-1}$ in the same way an Section~\ref{sec:3}. Recall that $\Ga_{s,\eps}$ is the cube in $\R^{n-1}$ with a side length $\eps$ and the center at $\eps s$, $\Ga_{s,t,\eps}$ is a cube with  a side length $L^{-1}\eps^2$ and center at $\x_{s,t,\eps}\ceq  \eps^2 L^{-1}t+\eps s$, where $L$ is the side length of the smallest cube in $\R^{n-1}$ containing $\Ga $ (without loss of generality, we may assume that this cube has its center at zero), and $\D_{s,t,\eps}$ is the ball of the radius $\al_{s,t,\eps} d\e$ centered at $\x_{s,t,\eps}$ with $d\e>0$ satisfying \eqref{assump:5+}--\eqref{assump:4+} and $\al\ije>0$ being defined by \eqref{al:ije}.  Let $\Z\e\subset \Z^{n-1}$ be a set consisting of $s\in\Z^{n-1}$ such that \eqref{eps:neighb} holds with $(-\eps,0)$ instead of $(-\eps,\eps)$.
		
		Now, we introduce the set of indices $\I\e\ceq (\Z\e\times\Z\e)\setminus\{(s,t)\in \Z\e\times\Z\e:\, s=t\}$, and {associate with} $i\in\I\e$ the sets $\D\ie\ceq \D_{s,t,\eps}$, where $i=(s,t)$ with $s,t\in\Z\e$. {Then we} define the bijective map $\ell\e:\I\e\to \I\e$ by $\ell\e:(s,t)\mapsto (t,s),\ s,t\in\Z\e,\, s\not=t$; cf.~Figure~\ref{fig4}. Finally, we {use again} the numbers $h\ije$ {defined} by \eqref{h:ije}.
		
		It is easy to check that the manifold $M\e$ \eqref{Me:Robin} with the sets $\D\ie$, the map $\ell\e$ and numbers $h\ije$ {introduced} above satisfy all the conditions required in Theorem~\ref{th:Robin}.
		\begin{figure}[ht]\label{fig4}
			\centering
			\includegraphics[width=0.5\textwidth]{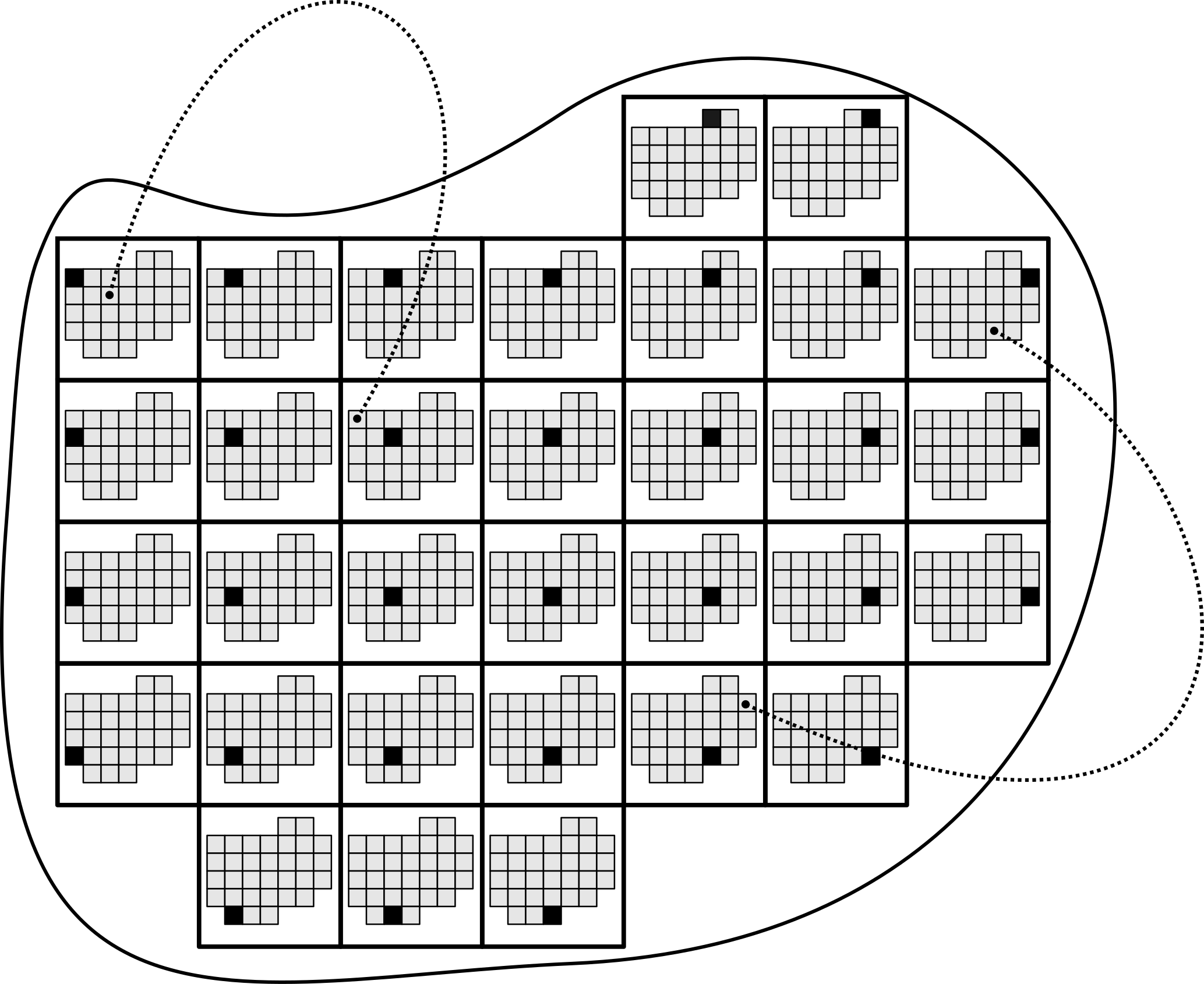}
			\caption{The set $\Ga$.
				The big (respectively, small) cubes correspond to the sets $\Ga_{s,\eps}$
				(respectively, $\Ga_{s,t,\eps}$).
				The dotted lines link the cubes $\Ga_{s,t,\eps}$ and $\Ga_{t,s,\eps}$
				(i.e., those cubes that further will be linked by a passage).
				The black cubes corresponds to $\Ga_{s,t,\eps}$ with $s=t$;
				within these cubes we have no `holes' $\D_{s,t,\eps}$}
		\end{figure}
	\end{example}
	
	\section*{Acknowledgments}
	{The work of P.E. was in part supported by the European Union's Horizon 2020 research and innovation programme under the Marie Sk\l odowska-Curie grant agreement No 873071.} A.K. is grateful to the Excellence Project FoS UHK 2204/2025-2026 for the financial support.
	

\end{document}